\newtheorem{thm}[equation]{Theorem}
\newtheorem{lem}[equation]{Lemma}
\newtheorem{cor}[equation]{Corollary}
\theoremstyle{definition}
\newtheorem{defn}[equation]{Definition}
\newtheorem{rmk}[equation]{Remark}
\numberwithin{equation}{section}
\newcommand\abs[2][empty]{\csname#1\endcsname \lvert{#2}\csname#1\endcsname\rvert}
\newcommand\doublebar[2][empty]{\csname#1\endcsname \lVert{#2}\csname#1\endcsname\rVert}
\newcommand\mat[1]{\bm{#1}}
\newcommand\arr[1]{\bm{\dot{#1}}}
\newcommand\dist{\mathop{\mathrm{dist}}\nolimits}
\newcommand\Div{\mathop{\mathrm{div}}\nolimits}
\newcommand\Tr{\mathop{\smash{\arr{\mathrm{Tr}}}\vphantom{T}}\nolimits}
\newcommand\Trace{\mathop{\mathrm{Tr}}\nolimits}
\newcommand\M{\mathop{\smash{\arr{\mathrm{M}}}\vphantom{M}}\nolimits}
\newcommand\re{\mathop{\mathrm{Re}}\nolimits}
 \let\R\RR
 \let\C\CC
\newcommand\NN{\mathbb{N}} \let\N\NN
\newcommand\1{\mathbf{1}}
\newcommand\D{\mathcal{D}}
\newcommand\s{\mathcal{S}}
\def\XX{\mathfrak{X}}
\def\DD{\mathfrak{D}}
\def\NN{\mathfrak{N}}
\newcommand\pureH{\parallel}
\newcommand\dmn{{n+1}}
\newcommand\pdmn{{(n+1)}}
\newcommand\dmnMinusOne{n}
\def\citation#1{}
\def\bibcite#1#2{}
\global\def\eqref{\@ifstar\@eqref\@@eqref}
\global\def\@eqref#1{\textup{\tagform@{\ref*{#1}}}}
\global\def\@@eqref#1{\textup{\tagform@{\ref{#1}}}}
\begin{document}

\title[The higher order Neumann problem]{The Neumann problem for higher order elliptic equations with symmetric coefficients}

\author{Ariel Barton}
\address{Ariel Barton, Department of Mathematical Sciences,
			309 SCEN,
			University of Ar\-kan\-sas,
			Fayetteville, AR 72701}
\email{aeb019@uark.edu}

\author{Steve Hofmann}
\address{Steve Hofmann, 202 Math Sciences Bldg., University of Missouri, Columbia, MO 65211}
\email{hofmanns@missouri.edu}
\thanks{Steve Hofmann is partially supported by the NSF grant DMS-1361701.}

\author{Svitlana Mayboroda}
\address{Svitlana Mayboroda, Department of Mathematics, University of Minnesota, Minneapolis, Minnesota 55455}
\email{svitlana@math.umn.edu}
\thanks{Svitlana Mayboroda is partially supported by the NSF CAREER Award DMS 1056004, the NSF INSPIRE Award DMS 1344235, and the NSF Materials Research Science and Engineering Center Seed Grant.}

\begin{abstract}
In this paper we establish well posedness of the Neumann problem with boundary data in $L^2$ or the Sobolev space $\dot W^2_{-1}$, in the half space, for linear elliptic differential operators with coefficients that are constant in the vertical direction and in addition are self adjoint. This generalizes the well known well-posedness result of the second order case and is based on a higher order and one sided version of the classic Rellich identity, and is the first known well posedness result for a higher order operator with rough variable coefficients and boundary data in a Lebesgue or Sobolev space.
\end{abstract}

\keywords{Elliptic equation, higher-order differential equation}

\subjclass[2010]{Primary
35J30
Secondary
31B10, 
35C15
}

\maketitle 

\tableofcontents

\section{Introduction}

In this paper we will establish well posedness of the Neumann problem in the half-space $\R^\dmn_+$, with boundary data in the Lebesgue space $L^2(\R^n)$ or the Sobolev space $\dot W^2_{-1}(\R^n)$, for certain elliptic differential operators of the form
\begin{equation}\label{C:eqn:divergence}
Lu = (-1)^m \sum_{\abs{\alpha}=\abs{\beta}=m} \partial^\alpha (A_{\alpha\beta} \partial^\beta u).\end{equation}
Specifically, we will consider self-adjoint operators associated with coefficients $\mat A$ that are $t$-independent in the sense that
\begin{equation}\label{C:eqn:t-independent}\mat A(x,t)=\mat A(x,s)=\mat A(x) \quad\text{for all $x\in\R^n$ and all $s$, $t\in\R$}.\end{equation}

The Neumann problem has traditionally been regarded as more difficult than the Dirichlet problem. Indeed in two important cases, well posedness of the Dirichlet problem with boundary values in a Lebesgue or Sobolev space is known, but well posedness of the Neumann problem is not: in the case of second order operators with real $t$-independent coefficients \cite{HofKMP15A,HofKMP15B} and in the case of constant coefficient higher order operators in Lipschitz domains \cite{PipV95B,Ver96}.

In the case of higher order operators of the form~\eqref{C:eqn:divergence} with variable $t$-independent coefficients, we can bound Dirichlet boundary values  in a way that we cannot at present bound Neumann boundary values. See Theorem~\ref{C:thm:rellich} below. We will use good behavior of Dirichlet boundary values to establish well posedness of the Neumann problem; we cannot at present use the same arguments to establish well posedness results for the Dirichlet problem because we lack corresponding bounds on the Neumann boundary values. See Remark~\ref{C:rmk:Dirichlet:good}.

Indeed, even formulating the higher order Neumann problem is a difficult matter. Recall that in the second order case, the Neumann boundary value of a solution $u$ to $-\Div \mat A\nabla u=0$ is the conormal derivative $\nu\cdot \mat A\nabla u=0$, where $\nu$ is the unit outward normal derivative; this is preferred to the normal derivative $\nu\cdot \nabla u$ because, by the divergence theorem, we have a weak formulation
\begin{equation}
\label{C:eqn:Neumann:2}
\int_{\partial\Omega} \varphi\,\nu\cdot \mat A\nabla u\,d\sigma = \int_\Omega \nabla\varphi\cdot \mat A\nabla u \quad\text{for all $\varphi\in C^\infty_0(\R^\dmn)$.}\end{equation}
Some complexities are already apparent: it is often the case that an operator $L$ may be written $L=-\Div \mat A\nabla$ for more than one choice of coefficient matrix~$\mat A$, and different choices of coefficients $\mat A$ lead to different boundary values $\nu\cdot \mat A\nabla u$.

In the second order case, we can often eliminate this ambiguity, for example by requiring that $\mat A$ be self-adjoint. In the second order self-adjoint case the $L^2$-Neumann problem with $t$-independent coefficients is well posed; see \cite{KenP93}. If $\mat A$ is not self-adjoint, we still do not know whether the Neumann problem is well posed, even for second order operators with real $t$-independent coefficients.

In the higher order case, $L$ may be associated to multiple self-adjoint coefficient matrices~$\mat A$. For example, the biharmonic operator $L=\Delta^2$ may be associated with coefficients $\mat A$ such that
\[\sum_{\abs\alpha=\abs\beta=2}\partial^\alpha v\,A_{\alpha\beta}\,\partial^\beta w=  \rho \Delta v \Delta w + (1-\rho) \sum_{j=1}^\dmn \sum_{k=1}^\dmn \partial^2_{jk} v\,\partial^2_{jk} w\]
for any real number~$\rho$. Notably, the Neumann problem for the biharmonic operator (as studied in \cite{Ver05}) is well posed for some values of the parameter $\rho$ and ill posed for others. Thus, we will establish well posedness of the Neumann problem under a boundary ellipticity condition (the bound~\eqref{C:eqn:elliptic:slices:strong} below) that is somewhat more restrictive than the ellipticity condition~\eqref{C:eqn:elliptic} standard in the theory.

Another complication arises in generalizing the formulation~\eqref{C:eqn:Neumann:2} to the higher order case.
Notice the appearance of the Dirichlet boundary values $\varphi\big\vert_{\partial\Omega}$ of $\varphi$ on the left-hand side of formula~\eqref{C:eqn:Neumann:2}. The Neumann boundary values are then dual to the Dirichlet boundary values. Thus, different formulations of the Dirichlet problem lead to different formulations of the Neumann problem. If we let the Dirichlet boundary values of $\varphi$ be $(\varphi,\partial_\nu\varphi,\dots,\partial_\nu^{m-1}\varphi)$, where $\partial_\nu$ denotes the partial derivative in the normal direction, then a straightforward (if tedious) integration by parts yields an analogue to formula~\eqref{C:eqn:Neumann:2} from which the Neumann boundary values may be extracted. See \cite[formula~(1.1.1)]{CohG85}, \cite{Ver05} or \cite[Proposition~4.3]{MitM13A}. 

However, it is often convenient to regard $\nabla^{m-1} \varphi\big\vert_{\partial\Omega}$ as the Dirichlet boundary values of~$\varphi$: the various components of $\nabla^{m-1}\varphi\big\vert_{\partial\Omega}$ may reasonably be expected to all possess the same degree of smoothness, while the lower order derivatives $\varphi$, $\partial_\nu\varphi,\dots,\partial_\nu^{m-2}\varphi$ appearing above may be expected to possess further orders of smoothness. See \cite{BarHM15p,BarHM17pB}. This is the formulation we shall use in the present paper. However, this does yield some additional complications, which we shall discuss momentarily.

We now discuss the details of our formulation of Neumann boundary values. 
If $\varphi$ is smooth and compactly supported in $\R^\dmn$, and if $Lu=0$ in $\Omega\subset\R^\dmn$, where $\partial\Omega$ is connected, and where $\nabla^m u$ is locally integrable up to the boundary, then 
\begin{equation}\label{C:eqn:introduction:Neumann:LHS}
\sum_{\abs\alpha=\abs\beta=m} \int_\Omega \partial^\alpha \varphi\,A_{\alpha\beta}\,\partial^\beta u\end{equation}
depends only on the behavior of $\varphi$ near~$\partial\Omega$, and in particular depends only on $\nabla^{m-1}\varphi\big\vert_{\partial\Omega}$. We denote the Neumann boundary values of $u$ by $\M_{\mat A} u$ and say that 
\begin{equation}\label{C:eqn:Neumann:intro}
\M_{\mat A}u=\arr g\quad\text{if }\sum_{\abs\alpha=\abs\beta=m} \int_\Omega \partial^\alpha \varphi\,A_{\alpha\beta}\,\partial^\beta u = \sum_{\abs\gamma=m-1} \int_{\partial\Omega} \partial^\gamma \varphi\,g_\gamma\,d\sigma
\end{equation}
for all $\varphi\in C^\infty_0(\R^\dmn)$.

We remark that $\M_{\mat A} u$ is an operator on $\{\nabla^{m-1}\varphi\big\vert_{\partial\Omega}:\varphi\in C^\infty_0(\R^\dmn)\}$. This is a \emph{proper} subspace of the set of all arrays of smooth, compactly supported functions defined in a neighborhood of~$\partial\Omega$. Thus, $\M_{\mat A} u$ is not an array of distributions; it is an equivalence class of distributions modulo arrays $\arr n$ that satisfy $\sum_{\abs\gamma=m-1} \int_{\partial\Omega} \partial^\gamma \varphi\,n_\gamma\,d\sigma=0$ for all $\varphi\in C^\infty_0(\R^\dmn)$. If $\arr g$ is an array of distributions (or functions) defined on~$\partial\Omega$, then the expression $\M_{\mat A} u=\arr g$ represents a slight abuse of notation; we mean that $\arr g$ is a representative of the equivalence class of distributions $\M_{\mat A} u$. This complication could be avoided by writing the right-hand side as $\sum_{j=0}^{m-1}\int_{\partial\Omega} \partial_\nu^j \varphi\, g_j\,d\sigma$, but then (as mentioned above) the various components $g_j$ of $\arr g$ would need to possess different orders of smoothness.

Our first well posedness result is the following theorem.

\begin{thm}\label{C:thm:well-posed:2}
Suppose that $L$ is an elliptic operator of the form \eqref{C:eqn:divergence} of order~$2m$, associated with coefficients $\mat A$ that are $t$-independent in the sense of formula~\eqref{C:eqn:t-independent} and are bounded in the sense of satisfying the bound~\eqref{C:eqn:elliptic:bounded}.

Suppose in addition that $A$ satisfies the boundary ellipticity condition \eqref{C:eqn:elliptic:slices:strong} and is self-adjoint, that is, that $A_{\alpha\beta}(x)=\overline{A_{\beta\alpha}(x)}$ for any $x\in\R^n$ and any $\abs\alpha=\abs\beta=m$.

For each $\arr g\in L^2(\R^\dmnMinusOne)$ there is a solution to the Neumann problem with boundary data $\arr g$, that is, a function $w$ defined in $\R^\dmn_+$ that satisfies
\begin{equation}
\label{C:eqn:neumann:regular}
\left\{\begin{gathered}
\begin{aligned}
Lw&=0 \text{ in }\R^\dmn_+
,\\
\M_{\mat A} w &= \arr g
,\end{aligned}
\\
\int_{\R^\dmnMinusOne}\int_0^\infty \abs{\nabla^m\partial_t w(x,t)}^2 \,t\,dt\,dx
+
\sup_{t>0}\doublebar{\nabla^m w(\,\cdot\,,t)}_{ L^2(\R^n)}^2
\leq C\doublebar{\arr g}_{L^2(\R^\dmnMinusOne)}^2
.\end{gathered}\right.\end{equation}
The solution $w$ is unique up to adding polynomials of degree~$m-1$.
\end{thm}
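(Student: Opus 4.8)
The plan is to build the solution $w$ via the layer-potential-type formalism that $t$-independence makes available, and then use the one-sided higher-order Rellich identity (Theorem~\ref{C:thm:rellich}) together with self-adjointness to close the estimate in \myeqref{eqn:neumann:regular}. First I would use the $t$-independence \myeqref{eqn:t-independent} to reduce, via the Fourier transform in the tangential variable $x$ and the resulting first-order ODE system in $t$, to a representation of any solution of $Lw=0$ in $\R^\dmn_+$ that decays appropriately. Concretely, one writes $Lw=0$ as a $2m$-th order equation whose coefficients are constant in $t$; the stable (decaying as $t\to\infty$) solution space is parametrized by a Poisson-type operator acting on the Dirichlet data $\nabla^{m-1}w\big\vert_{t=0}$, and the square-function and non-tangential-maximal-type bounds in \myeqref{eqn:neumann:regular} are exactly the standard consequences of this parametrization (analyticity of the symbol in the stable half-plane, Cacciopoli, and the Littlewood--Paley theory built on the $t\,dt$ measure). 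So the real content is not the construction but the invertibility of the Neumann-to-Dirichlet map.

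The key step is the Rellich identity. I would apply Theorem~\ref{C:thm:rellich} to the solution $w$: this is the \emph{one-sided, higher-order} analogue of the classical second-order Rellich estimate, and in the self-adjoint case it will give, after integrating the identity $\partial_t\bigl(\nabla^m w\,\mat A\,\overline{\nabla^m w}\bigr)$-type expression over $\R^\dmn_+$ and using $Lw=0$, a comparison
\[
\doublebar{\nabla^m w(\,\cdot\,,0)}_{L^2(\R^n)}^2 \;\approx\; \re\sum_{\abs\alpha=\abs\beta=m}\int_{\R^n}\partial^\alpha w\,A_{\alpha\beta}\,\overline{\partial^\beta\partial_t w}\Big\vert_{t=0},
\]
where the right-hand side is, by the definition \myeqref{eqn:Neumann:intro} of $\M_{\mat A}$, controlled by $\doublebar{\M_{\mat A}w}_{L^2}\doublebar{\partial_t\nabla^{m-1}w(\,\cdot\,,0)}_{L^2}$ — and the second factor is again comparable to $\doublebar{\nabla^m w(\,\cdot\,,0)}_{L^2}$ by the ODE structure (the "Rellich equivalence" of normal and tangential derivatives at the boundary). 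This yields the \emph{a priori} estimate $\doublebar{\nabla^m w(\,\cdot\,,0)}_{L^2}\lesssim\doublebar{\M_{\mat A}w}_{L^2}$, hence injectivity of the Neumann-to-Dirichlet map modulo degree-$(m-1)$ polynomials (whose $m$-th gradient vanishes); the boundary ellipticity condition \myeqref{eqn:elliptic:slices:strong}, strictly stronger than \myeqref{eqn:elliptic}, is precisely what makes the quadratic form in the Rellich identity coercive. Surjectivity onto all of $L^2(\R^n)$ then follows by a continuity/method-of-continuity argument: deform $\mat A$ through self-adjoint, boundary-elliptic, $t$-independent coefficients to a constant-coefficient operator for which the Neumann problem is classically solvable, using the uniform \emph{a priori} bound along the deformation together with the fact that the Neumann data map is Fredholm of index zero on the relevant quotient space.

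The main obstacle I expect is the two-sided nature of the argument: the Rellich identity as stated in Theorem~\ref{C:thm:rellich} bounds Dirichlet data by Neumann data but (as the introduction stresses, cf.\ Remark~\ref{C:rmk:Dirichlet:good}) not conversely, so one cannot symmetrically bound $\doublebar{\M_{\mat A}w}_{L^2}$ by $\doublebar{\nabla^m w(\,\cdot\,,0)}_{L^2}$ and must instead get the reverse inequality — needed for surjectivity — from a separate, softer argument (solvability of the \emph{variational} Neumann problem in $\dot W^2_m(\R^\dmn_+)$, which gives a solution with $\nabla^m w\in L^2(\R^\dmn_+)$ and $\M_{\mat A}w=\arr g$ for $\arr g$ in a dense class, followed by the square-function upgrade). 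Reconciling the variational solution with the ODE/Poisson-operator solution — i.e.\ showing the solution produced by the Lax--Milgram argument is the same one for which the Rellich estimate and the square-function bound hold — is the delicate gluing point, and handling the polynomial ambiguity (working consistently in the quotient by degree-$(m-1)$ polynomials, on which $\doublebar{\nabla^m\,\cdot\,(\,\cdot\,,0)}_{L^2}$ is a genuine norm) must be threaded carefully through both halves.
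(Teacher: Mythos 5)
Your outline correctly identifies two of the paper's landmarks --- the one-sided Rellich identity (Theorem~\ref{C:thm:rellich}) to control Dirichlet data by Neumann data, hence uniqueness, and a method-of-continuity argument to transfer surjectivity from a constant-coefficient anchor --- but several of the load-bearing steps cannot be carried out as you describe, and the paper takes a genuinely different route around them.

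First, the ``Fourier-in-$x$ / first-order ODE system in $t$ / Poisson operator'' construction of solutions only works when $\mat A$ is constant, not merely $t$-independent: for variable $\mat A(x)$ the tangential Fourier transform does not diagonalize $L$, so there is no ODE in $t$ and no Poisson-symbol parametrization of decaying solutions. The paper instead builds candidate solutions as double layer potentials $\D^{\mat A}\arr f$ (Section~\ref{C:sec:dfn:potentials}), and the square-function and slice bounds in \eqref{C:eqn:neumann:regular} come from the potential bounds of Theorems~\ref{C:thm:square}--\ref{C:thm:square:rough} and the trace theorems of Section~\ref{C:sec:trace}, not from any Littlewood--Paley analysis of a Poisson symbol. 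Second, ``a constant-coefficient operator for which the Neumann problem is classically solvable'' does not exist in the sense you imply: as the paper stresses (Remark~\ref{C:rmk:biharmonic}), even for $\Delta^2$ the Neumann problem is ill-posed for some self-adjoint coefficient tensors, so the anchor case is itself nontrivial. Section~\ref{C:sec:special} constructs a very specific operator $L=\sum_{j=0}^m(-1)^j(-\Delta_\pureH)^{m-j}\partial_\dmn^{2j}$, solves $Lw=0$ by an explicit ansatz of exponentials $\exp(2\pi i\abs\xi e^{\pi ik/(m+1)}t)$, and reduces surjectivity of the Neumann data map to invertibility of the discrete sine matrix $M_{Lk}=\sin(\pi Lk/(m+1))$ --- none of this is ``classical.'' Third, you invoke ``Fredholm of index zero'' to close the continuity argument, but nothing in the setup gives Fredholmness of $\M_{\mat A}^+\D^{\mat A}$; the paper avoids this entirely by combining the uniform lower bound $\doublebar{\arr f}\lesssim\doublebar{\M_s\arr f}$ along the segment $\mat A_s=(1-s)\mat A_0+s\mat A$ with a Neumann-series small-step argument to propagate surjectivity, which only needs bounded invertibility at one endpoint and Lipschitz dependence of $\M_s$ on~$s$. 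Finally, your fallback for surjectivity --- solve variationally in $\dot W^2_m(\R^\dmn_+)$ by Lax--Milgram for dense data and then ``upgrade'' to square-function bounds --- is circular, since the Lax--Milgram solution takes data in $(\dot W\!A^2_{m-1,1/2})^*$, not in $L^2$, and the passage from the Besov-type a priori bound to the $L^2$-Neumann estimate is precisely the content that the Rellich identity and potential bounds are supplying. The paper's logical scaffolding is instead the abstract equivalence theorem for layer potentials (Theorem~\ref{C:thm:invertible}, verified via Lemma~\ref{C:lem:invertible:2}), which you do not mention and which also forces you to verify jump relations and the Green's formula in \emph{both} half-spaces, a point your sketch omits.

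What your approach and the paper's have genuinely in common: the coercivity supplied by the boundary ellipticity hypothesis \eqref{C:eqn:elliptic:slices:strong} enters exactly through the Rellich identity (your reading of this is correct), and well-posedness is reduced to a quantitative invertibility of a Neumann-type boundary operator, established on a special constant-coefficient model and then propagated by continuity in $s\mapsto\mat A_s$.
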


In a forthcoming paper \cite{BarHM17pD}, we intend to show that the solutions~$w$, in addition to satisfying square-function and uniform $L^2$ estimates, also satisfy nontangential maximal estimates.

It is common in the theory of divergence form equations to consider two forms of the Dirichlet problem. The first is the Dirichlet problem with boundary data in $L^2$ or (more generally) in a Lebesgue space~$L^p$. The second is the Dirichlet regularity problem, that is, the Dirichlet problem with boundary data in a boundary Sobolev space. For example, if the matrix $\mat A$ in formula~\eqref{B:eqn:divergence} has constant coefficients, and if $\Omega\subset\R^\dmn$ is a bounded Lipschitz domain, then by \cite{PipV95B} and \cite{DahKPV97} the Dirichlet problem
\begin{gather*}
Lv=0\text{ in }\Omega,\quad \nabla^{m-1}v\big\vert_{\partial\Omega}=\arr f, \quad \int_{\Omega} \abs{\nabla^m v(X)}^2\,\dist(X,\partial\Omega)\,dX
\leq C \doublebar{\arr f}_{L^2(\partial\Omega)}
\end{gather*}
and the regularity problem
\begin{gather*}
Lw=0\text{ in }\Omega,\quad \nabla^{m-1}w\big\vert_{\partial\Omega}=\arr f, \quad 
\int_{\Omega} \abs{\nabla^{m+1} w(X)}^2\,\dist(X,\partial\Omega)\,dX
\leq C \doublebar{\arr f}_{\dot W_1^2(\partial\Omega)}
\end{gather*}
are well posed. Here $\dot W^p_1(\partial\Omega)$ is the boundary Sobolev space of functions whose tangential gradient lies in~$L^p(\partial\Omega)$. Notice that the estimates on the solution $w$ in the regularity problem just given are very similar to those of Theorem~\ref{C:thm:well-posed:2}. 

The Neumann problem is most often studied in the case where the boundary data lies in a Lebesgue space; generally, the Neumann problem then has the same sorts of estimates as the regularity problem. However, it is also possible to study the Neumann problem with boundary data in a negative smoothness space, that is, the dual space to a Sobolev space; the solutions then have the same sort of estimates as the Dirichlet problem.

However, generalizing the weak formulation~\eqref{C:eqn:Neumann:intro} of Neumann boundary values is somewhat problematic given such estimates on~$v$. Recall that we seek solutions $v$ that satisfy 
\begin{equation}\label{C:eqn:rough:estimate}
\int_{\R^\dmnMinusOne}\int_0^\infty \abs{\nabla^m v(x,t)}^2 \,t\,dt\,dx
<\infty.
\end{equation}
For such $v$ the gradient $\nabla^m v$ is not locally integrable up to the boundary of $\R^\dmn_+$, and so the integral~\eqref{C:eqn:introduction:Neumann:LHS} does not converge absolutely and the formula~\eqref{C:eqn:Neumann:intro} for $\M_{\mat A} v$ may not be meaningful. There are several ways to resolve this difficulty. First, one may consider solutions $v$ that satisfy $\nabla^m v\in L^2(\R^\dmn_+)$ as well as the estimate~\eqref{C:eqn:rough:estimate}; for such~$v$ the integral~\eqref{C:eqn:introduction:Neumann:LHS} converges for all $\varphi$ smooth and compactly supported (and indeed for all $\varphi$ with $\nabla^m \varphi\in L^2(\R^\dmn)$).

Second, given an array of test functions $\nabla^{m-1}\varphi\big\vert_{\partial\R^\dmn_+}$, one may define the pairing $\langle \nabla^{m-1}\varphi\big\vert_{\partial\R^\dmn_+},\M_{\mat A} v\rangle$ in terms of a specific extension, that is, a particular choice of function $\mathcal{E}\varphi$ with $\nabla^{m-1}\mathcal{E}\varphi\big\vert_{\partial\R^\dmn_+}=\nabla^{m-1}\varphi\big\vert_{\partial\R^\dmn_+}$. We will use the following extension.
Suppose that $\varphi$ is smooth and compactly supported in $\R^\dmn$. Let $\varphi_k(x)=\partial_\dmn^k\varphi(x,0)$.
If $t\in\R$, let
\begin{equation}
\label{C:eqn:Neumann:extension}
\mathcal{E}\varphi(x,t) = 
\sum_{k=0}^{m-1}\frac{1}{k!}t^k \mathcal{Q}^m_t \varphi_k(x)
\quad\text{where}\quad\mathcal{Q}_t^m = e^{-(-t^2\Delta_\pureH)^m}.\end{equation}
Here $\Delta_\pureH$ is the Laplacian taken purely in the horizontal variables.
Observe that $\mathcal{E}\varphi$ is also smooth on $\overline{\R^\dmn_\pm}$ up to the boundary, albeit is not compactly supported, and that $\nabla^{m-1}\mathcal{E}\varphi(x,0)=\nabla^{m-1}\varphi(x,0)$.

In \cite[Theorem~\ref*{B:thm:Neumann:1}]{BarHM17pB} it was shown that, if $v$ satisfies the bound \eqref{C:eqn:rough:estimate}, then the integral
$ \int_{\R^n} \partial^\alpha \mathcal{E}\varphi(x,t)\,A_{\alpha\beta}(x)\,\partial^\beta v(x,t)\,dx$ converges absolutely for any \emph{fixed} $t>0$ (and that the value of this integral is continuous in~$t$), and that 
\[\lim_{\varepsilon\to 0^+} \lim_{T\to\infty} \smash{\sum_{\substack{\abs\alpha=\abs\beta=m}}}\int_\varepsilon^T\int_{\R^n} \partial^\alpha \mathcal{E}\varphi(x,t)\,A_{\alpha\beta}(x)\,\partial^\beta v(x,t)\,dx\,dt\]
exists and equals a number whose absolute value is at most
\[
C\doublebar{\nabla^{m-1}\varphi(\,\cdot\,,0)}_{L^2(\R^n)} \biggl(\int_{\R^\dmnMinusOne}\int_0^\infty \abs{\nabla^m v(x,t)}^2 \,t\,dt\,dx\biggr)^{1/2}.\]
Thus, for such $v$ one may define the Neumann boundary values using the  extension~$\mathcal{E}$. See formula~\eqref{C:eqn:Neumann:E} below.

By \cite[Lemma~\ref*{B:lem:Neumann:W2}]{BarHM17pB}, if $\nabla^m v\in L^2(\R^\dmn_+)$, then the two definitions of Neumann boundary values coincide, and so the two ways of studying Neumann boundary values of rough solutions are equivalent.

The second main theorem of this work, to be proven via duality with Theorem~\ref{C:thm:well-posed:2}, is as follows.

\begin{thm}\label{C:thm:well-posed:1} Let $L$ be as in Theorem~\ref{C:thm:well-posed:2}. Then for each array $\arr g$ of bounded linear operators on $\dot W^2_1(\R^n)$, there is a solution to the rough Neumann problem with boundary data $\arr g$, that is, a function $v$ defined in $\R^\dmn_+$ that satisfies
\begin{equation}
\label{C:eqn:neumann:rough}
\left\{\begin{gathered}
\begin{aligned}
Lv&=0 \text{ in }\R^\dmn_+
,\\
\M_{\mat A} v &= \arr g
,\end{aligned}
\\
\int_{\R^\dmnMinusOne}\int_0^\infty \abs{\nabla^m v(x,t)}^2 \,t\,dt\,dx
\leq C\doublebar{\arr g}_{\dot W^2_{-1}(\R^n)}
\end{gathered}\right.\end{equation}
where $\M_{\mat A} v$  is defined in terms of a distinguished extension as above.
The solution $v$ is unique up to adding polynomials of degree~$m-1$.
\end{thm}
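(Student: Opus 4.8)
The plan is to derive Theorem~\ref{C:thm:well-posed:1} from Theorem~\ref{C:thm:well-posed:2} by a duality argument. Recall that $\dot W^2_{-1}(\R^n)$ is (up to identifications) the dual of $\dot W^2_1(\R^n)$, so an array $\arr g$ of bounded linear operators on $\dot W^2_1(\R^n)$ is precisely an element of $\dot W^2_{-1}(\R^n)$. The main point is that the Neumann data $\arr g$ pairs against Dirichlet data of test functions via the bilinear form on the right-hand side of~\eqref{C:eqn:Neumann:intro}, and the Dirichlet data of a $\dot W^2_1$-function can be produced as the trace $\nabla^{m-1}\mathcal{E}\varphi\big\vert_{\partial\R^\dmn_+}$ of the extension~\eqref{C:eqn:Neumann:extension}. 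So the scheme is: build a bounded linear functional $\Lambda$ on a suitable Hilbert space of solutions (or of their boundary data), show it is represented by solving a Neumann problem with $L^2$ data via Theorem~\ref{C:thm:well-posed:2}, and read off the desired square-function bound.

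Concretely, I would first fix $\arr g\in\dot W^2_{-1}(\R^n)$ and consider the auxiliary problem: for each $\arr h\in L^2(\R^n)$, let $w=w_{\arr h}$ be the solution furnished by Theorem~\ref{C:thm:well-posed:2} with $\M_{\mat A}w=\arr h$, so that $w$ satisfies the estimate in~\eqref{C:eqn:neumann:regular}. Using self-adjointness of $\mat A$, the two Green-type identities (the integration by parts built into the definition of $\M_{\mat A}$, applied once with roles of the solutions swapped) show that the pairing $\langle \arr g,\nabla^{m-1}w_{\arr h}\big\vert_{\partial\R^\dmn_+}\rangle$ — which makes sense because $\nabla^{m-1}w_{\arr h}(\,\cdot\,,0)\in L^2\subset\dot W^2_1$-type bound holds by the uniform estimate — is a bounded linear functional of $\arr h\in L^2(\R^n)$, with norm controlled by $\doublebar{\arr g}_{\dot W^2_{-1}(\R^n)}$. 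Here I must be careful: the trace $\nabla^{m-1}w_{\arr h}(\,\cdot\,,0)$ lies in $L^2$, not obviously in $\dot W^2_1$, so the pairing $\langle\arr g,\cdot\rangle$ should be interpreted through the extension $\mathcal{E}$ and the absolutely convergent slice integrals $\int_{\R^n}\partial^\alpha\mathcal{E}\varphi\,A_{\alpha\beta}\,\partial^\beta w$ from \cite[Theorem~\ref*{B:thm:Neumann:1}]{BarHM17pB}, passing to the limit in $t$; this is exactly the mechanism by which $\M_{\mat A}v=\arr g$ is defined for rough $v$. By Riesz representation (in the Hilbert space whose norm is the left-hand side of~\eqref{C:eqn:neumann:regular}, or more precisely the space of Neumann data $\arr h$ with the $L^2$ norm), there is a unique $\arr h_0\in L^2(\R^n)$ with $\doublebar{\arr h_0}_{L^2}\le C\doublebar{\arr g}_{\dot W^2_{-1}}$ representing this functional.

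The candidate solution is then $v:=w_{\arr h_0}$, the Theorem~\ref{C:thm:well-posed:2} solution with $L^2$-Neumann data $\arr h_0$. It automatically satisfies $Lv=0$ in $\R^\dmn_+$ and, by~\eqref{C:eqn:neumann:regular}, the uniform bound $\sup_{t>0}\doublebar{\nabla^m v(\,\cdot\,,t)}_{L^2(\R^n)}^2\le C\doublebar{\arr h_0}_{L^2}^2\le C\doublebar{\arr g}_{\dot W^2_{-1}}^2$; combined with $\nabla^m v\in L^2(\R^\dmn_+)$ following from the square function bound integrated against $t\,dt$ over bounded $t$-intervals and the uniform bound, one gets in particular the square-function estimate~\eqref{C:eqn:rough:estimate} needed to even define $\M_{\mat A}v$ as a rough Neumann datum. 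It remains to check two things: that $\M_{\mat A}v=\arr g$ in the sense of the distinguished extension, and that the square-function bound $\int_{\R^n}\int_0^\infty\abs{\nabla^m v}^2\,t\,dt\,dx\le C\doublebar{\arr g}_{\dot W^2_{-1}}$ holds with the correct (non-squared) power on the right. The identification $\M_{\mat A}v=\arr g$ is forced by the Riesz representation: for every test $\varphi$, $\langle\nabla^{m-1}\varphi\big\vert_{\partial\R^\dmn_+},\M_{\mat A}v\rangle$ equals the limit of slice integrals, which by the Green identity equals $\langle\arr g,\nabla^{m-1}\varphi\big\vert_{\partial\R^\dmn_+}\rangle$ because $\arr h_0$ was chosen to represent precisely that functional — here one uses \cite[Lemma~\ref*{B:lem:Neumann:W2}]{BarHM17pB} to know the two definitions of Neumann data agree for $v$ with $\nabla^m v\in L^2(\R^\dmn_+)$. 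Uniqueness up to degree $m-1$ polynomials follows from the uniqueness clause of Theorem~\ref{C:thm:well-posed:2} together with the injectivity of $\M_{\mat A}$ modulo such polynomials on the relevant solution space.

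I expect the main obstacle to be the correct power of $\doublebar{\arr g}_{\dot W^2_{-1}}$ in the final estimate and, relatedly, the bootstrap that upgrades the uniform $L^2$-bound on $\nabla^m v(\,\cdot\,,t)$ to the weighted square-function bound. The square-function estimate for $v$ does not come directly from Theorem~\ref{C:thm:well-posed:2} applied to $\arr h_0$ — that gives $\int\abs{\nabla^m\partial_t v}^2 t\,dt\,dx$, a bound on one more derivative than~\eqref{C:eqn:rough:estimate} — so one must instead obtain~\eqref{C:eqn:rough:estimate} by testing the weak formulation of $Lv=0$ against $v$ itself (or against a Rellich-type multiplier) and using $\M_{\mat A}v=\arr g\in\dot W^2_{-1}$ paired against $\nabla^{m-1}v\big\vert_{\partial\R^\dmn_+}\in\dot W^2_1$, whose $\dot W^2_1$-norm is controlled by $\bigl(\int\abs{\nabla^m v}^2 t\,dt\,dx\bigr)^{1/2}$ via the one-sided Rellich identity (Theorem~\ref{C:thm:rellich}); this produces an inequality of the form $X^2\le C\doublebar{\arr g}_{\dot W^2_{-1}}X$ with $X=\bigl(\int\abs{\nabla^m v}^2 t\,dt\,dx\bigr)^{1/2}$, which self-improves to $X\le C\doublebar{\arr g}_{\dot W^2_{-1}}$ and explains the linear (not quadratic) appearance of the norm. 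Keeping the extension $\mathcal{E}$, the slice-integral limits, and the two equivalent definitions of rough Neumann data straight throughout the duality is the part that requires the most care.
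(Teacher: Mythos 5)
The paper proves Theorem~\ref{C:thm:well-posed:1} in two lines: by Lemma~\ref{C:lem:invertible:2} and Theorem~\ref{C:thm:invertible}, Theorem~\ref{C:thm:well-posed:2} is equivalent to invertibility of $\M_{\mat A}^+\D^{\mat A}:\dot W\!A^2_{m-1,1}\to(\dot W\!A^2_{m-1,0})^*$; by the adjoint relation~\eqref{C:eqn:neumann:D:dual} of Lemma~\ref{C:lem:solution:adjoint} (and since $\mat A=\mat A^*$), this is equivalent to invertibility of $\M_{\mat A}^+\D^{\mat A}:\dot W\!A^2_{m-1,0}\to(\dot W\!A^2_{m-1,1})^*$; and by Lemma~\ref{C:lem:invertible:1} the latter invertibility is exactly Theorem~\ref{C:thm:well-posed:1}. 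You instead attempt a direct Riesz-representation duality argument, which is a genuinely different route — but it has a gap that I do not believe can be filled without importing the layer-potential machinery the paper uses.

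The gap is a mix-up of Dirichlet and Neumann data in the Riesz step. You define $\Lambda(\arr h)=\langle\arr g,\Tr_{m-1}^+ w_{\arr h}\rangle$ (with $w_{\arr h}$ the Theorem~\ref{C:thm:well-posed:2} solution, $\M_{\mat A}^+w_{\arr h}=\arr h$), correctly bound it on $L^2$ using the Rellich inequality, and take the Riesz representer $\arr h_0$. You then set $v:=w_{\arr h_0}$. But by construction $\M_{\mat A}^+v=\arr h_0\in L^2(\R^n)$, which is not $\arr g$ — a generic $\arr g\in\dot W^2_{-1}(\R^n)$ is not an $L^2$ function, and the estimate $\doublebar{\arr h_0}_{L^2}\leq C\doublebar{\arr g}_{\dot W^2_{-1}}$ you derive makes it impossible to have $\arr h_0=\arr g$ in general. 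The subsequent verification that ``$\M_{\mat A}v=\arr g$ is forced by the Riesz representation'' is circular: the Riesz relation says $\langle\arr h,\arr h_0\rangle=\langle\arr g,\Tr_{m-1}^+w_{\arr h}\rangle$ for all $\arr h$, which does not reduce to $\langle\Tr_{m-1}^+\varphi,\arr h_0\rangle=\langle\arr g,\Tr_{m-1}^+\varphi\rangle$. If you carry out the Green's identity (for two solutions of $Lu=0$ with self-adjoint~$\mat A$) honestly, you obtain $\langle\arr g,\Tr_{m-1}^+w_{\arr h}\rangle=\langle\Tr_{m-1}^+v,\arr h\rangle$ for the sought solution $v$ with $\M_{\mat A}^+v=\arr g$; so Riesz representation identifies $\arr h_0$ as the \emph{Dirichlet} trace $\Tr_{m-1}^+v$, not the Neumann data. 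At that point you are left having to construct $v$ from its Dirichlet trace — that is, to solve a Dirichlet regularity problem — which is exactly what the paper says it cannot do directly (see Remark~\ref{C:rmk:Dirichlet:good}). The only escape is to represent $v$ via the Green's formula $v=-\D^{\mat A}(\arr h_0)+\s^{L}(\arr g)$ and verify $\M_{\mat A}^+v=\arr g$ using jump relations and invertibility of $\M_{\mat A}^+\D^{\mat A}$ — but then you have reconstructed the paper's layer-potential argument, and the direct Riesz shortcut has bought nothing. Your closing remarks about the square-function estimate and the self-improving $X^2\leq C\doublebar{\arr g}X$ bound are aimed at the right issue, but they are downstream of a construction that does not produce a function with the required Neumann data in the first place.
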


We also have a perturbative result.
\begin{thm}\label{C:thm:perturbation} Suppose that $L_0$ is an elliptic operator of the form \eqref{C:eqn:divergence} of order~$2m$, associated with coefficients $\mat A_0$ that are $t$-independent in the sense of formula~\eqref{C:eqn:t-independent} and are bounded and elliptic in the sense of satisfying the bounds~\eqref{C:eqn:elliptic:bounded} and~\eqref{C:eqn:elliptic}.

Suppose that the Neumann problem \eqref{C:eqn:neumann:regular} for $\mat A_0$ is well posed; that is, for every $\arr g\in L^2(\R^n)$ there is a solution $w_0$ to the problem \eqref{C:eqn:neumann:regular} with $\mat A$ replaced by $\mat A_0$, and that $w_0$ is unique up to adding polynomials of degree $m-1$. Suppose that the corresponding problem in the lower half-space
\begin{equation}
\label{C:eqn:neumann:regular:lower}
\left\{\begin{gathered}
\begin{aligned}
L_0w&=0 \text{ in }\R^\dmn_-
,\\
\M_{\mat A_0}^- w &= \arr g
,\end{aligned}
\\
\int_{\R^\dmnMinusOne}\int_{-\infty}^0 \abs{\nabla^m\partial_t w(x,t)}^2 \,\abs{t}\,dt\,dx
+
\sup_{t<0}\doublebar{\nabla^m w(\,\cdot\,,t)}_{ L^2(\R^n)}^2
\leq C\doublebar{\arr g}_{L^2(\R^\dmnMinusOne)}^2
\end{gathered}\right.\end{equation}
is also well posed.

Then there is some $\varepsilon>0$, depending only on the ellipticity constants $\lambda$ and $\Lambda$ in formulas~\eqref{C:eqn:elliptic} and \eqref{C:eqn:elliptic:bounded} and the constants $C$ in the problems~\eqref{C:eqn:neumann:regular} and~\eqref{C:eqn:neumann:regular:lower}, such that if $\mat A$ is $t$-independent and $\doublebar{\mat A-\mat A_0}_{L^\infty(\R^n)}<\varepsilon$, then the Neumann problem \eqref{C:eqn:neumann:regular} is well posed for coefficients~$\mat A$.

Similarly, if the rough Neumann problem \eqref{C:eqn:neumann:rough} for coefficients $\mat A_0$ is well posed in both $\R^\dmn_+$ and $\R^\dmn_-$, and if $\doublebar{\mat A-\mat A_0}_{L^\infty(\R^n)}$ is small enough, then the rough Neumann problem \eqref{C:eqn:neumann:rough} is well posed for coefficients~$\mat A$ as well.


\end{thm}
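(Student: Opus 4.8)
The plan is to treat $L$ as a small perturbation of $L_0$ and to produce the solution by a contraction mapping argument, feeding the hypothesized well posedness for $L_0$ into an inhomogeneous version of the $L_0$-Neumann problem. For a function $w$ on $\R^\dmn_+$ let $\arr H$ be the array with components $H_\alpha=\sum_{\abs\beta=m}(\mat A_0-\mat A)_{\alpha\beta}\,\partial^\beta w$, $\abs\alpha=m$. Since $L_0w-Lw=(-1)^m\sum_{\abs\alpha=m}\partial^\alpha H_\alpha$, for $\varphi\in C^\infty_0(\R^\dmn)$ we have
\[\sum_{\abs\alpha=\abs\beta=m}\int_{\R^\dmn_+}\partial^\alpha\varphi\,A_{\alpha\beta}\,\partial^\beta w=\sum_{\abs\alpha=\abs\beta=m}\int_{\R^\dmn_+}\partial^\alpha\varphi\,(A_0)_{\alpha\beta}\,\partial^\beta w-\sum_{\abs\alpha=m}\int_{\R^\dmn_+}\partial^\alpha\varphi\,H_\alpha,\]
so, comparing with~\eqref{C:eqn:Neumann:intro}, the requirements $Lw=0$ in $\R^\dmn_+$ and $\M_{\mat A}w=\arr g$ say exactly that $w$ solves the inhomogeneous $L_0$-Neumann problem with interior datum $\arr H$ and boundary datum $\arr g$. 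If $w_0$ is a solution of the ordinary $L_0$-Neumann problem with data $\arr g$ — which exists and obeys the estimate of~\eqref{C:eqn:neumann:regular} by hypothesis — then $w-w_0$ solves the inhomogeneous $L_0$ equation with interior datum $\arr H$ and \emph{zero} Neumann data. (For the rough problem~\eqref{C:eqn:neumann:rough} all boundary pairings and all integrals against $\arr H$ here are regularized by the distinguished extension $\mathcal E$, just as in the introduction.)

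Now suppose we have, for $L_0$, a bounded solution operator $\mathcal T_0$ for the inhomogeneous Neumann problem with zero boundary data: for $\arr H$ in the relevant space — the space of $v$ with $\int_{\R^n}\int_0^\infty\abs{\nabla^m v(x,t)}^2\,t\,dt\,dx<\infty$ for the rough problem, and the space given by the left-hand side of the estimate in~\eqref{C:eqn:neumann:regular} for the regular problem — the function $\mathcal T_0\arr H$ solves $L_0(\mathcal T_0\arr H)=(-1)^m\sum_{\abs\alpha=m}\partial^\alpha H_\alpha$ in $\R^\dmn_+$, has vanishing Neumann data, and obeys $\doublebar{\nabla^m\mathcal T_0\arr H}\le C_0\doublebar{\arr H}$ in the corresponding (full) norm, with $C_0$ depending only on $\lambda$, $\Lambda$ and the well posedness constants in~\eqref{C:eqn:neumann:regular} and~\eqref{C:eqn:neumann:regular:lower}. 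Then solving the $\mat A$-Neumann problem with data $\arr g$ is the same as finding a fixed point of
\[w\longmapsto w_0+\mathcal T_0\bigl((\mat A_0-\mat A)\nabla^m w\bigr)\]
in the Banach space of functions (modulo polynomials of degree $m-1$) with the relevant norm. Here it matters that $\mat A$, like $\mat A_0$, is $t$-independent: in the regular case $\partial_t\arr H=(\mat A_0-\mat A)\nabla^m\partial_t w$, so $\doublebar{\arr H}\le\doublebar{\mat A-\mat A_0}_{L^\infty(\R^n)}\doublebar{\nabla^m w}$ in the full norm, and likewise for the square-function norm in the rough case. Hence the map above has Lipschitz constant at most $C_0\doublebar{\mat A-\mat A_0}_{L^\infty(\R^n)}$ and is a contraction once $\doublebar{\mat A-\mat A_0}_{L^\infty(\R^n)}<\varepsilon:=1/(2C_0)$; its fixed point is the desired solution, and absorbing the contraction term yields the estimate in~\eqref{C:eqn:neumann:regular} (resp.~\eqref{C:eqn:neumann:rough}). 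Uniqueness is the same: the difference $w'$ of two solutions with the same Neumann data satisfies $w'=\mathcal T_0\bigl((\mat A_0-\mat A)\nabla^m w'\bigr)$, so $\doublebar{\nabla^m w'}\le C_0\doublebar{\mat A-\mat A_0}_{L^\infty(\R^n)}\doublebar{\nabla^m w'}$ forces $\nabla^m w'\equiv0$, that is, $w'$ is a polynomial of degree $m-1$.

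It remains to construct $\mathcal T_0$, and this is where both half-spaces and the auxiliary theory of \cite{BarHM15p,BarHM17pB} enter. Extend $\arr H$ by zero to $\R^\dmn_-$ and let $\Pi$ solve $L_0\Pi=(-1)^m\sum_{\abs\alpha=m}\partial^\alpha H_\alpha$ on all of $\R^\dmn$, with $\nabla^m\Pi$ controlled globally by $\doublebar{\arr H}$; such a whole-space Newton potential is available with the right mapping properties because $L_0$ is $t$-independent and elliptic. The restriction $\Pi\big\vert_{\R^\dmn_-}$ is then a genuine solution of $L_0u=0$ in $\R^\dmn_-$, and splitting the whole-space interior form over $\R^\dmn_+$ and $\R^\dmn_-$ shows that the Neumann defect of $\Pi\big\vert_{\R^\dmn_+}$ relative to the interior datum $\arr H$ equals $-\M_{\mat A_0}^-\bigl(\Pi\big\vert_{\R^\dmn_-}\bigr)$; using the trace results of \cite{BarHM17pB} and well posedness of~\eqref{C:eqn:neumann:regular:lower}, this defect is represented by an $L^2(\R^n)$ function (resp.\ a bounded functional on $\dot W^2_1(\R^n)$) with norm at most $C\doublebar{\arr H}$. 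Finally set $\mathcal T_0\arr H=\Pi\big\vert_{\R^\dmn_+}-w_1$, where $w_1$ solves the ordinary $L_0$-Neumann problem in $\R^\dmn_+$ with data equal to this defect — which exists and obeys the estimate by~\eqref{C:eqn:neumann:regular} — so that $w_1$ cancels the defect and $\mathcal T_0\arr H$ has all the stated properties.

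The main obstacle is exactly this construction of $\mathcal T_0$: one must check that the whole-space Newton potential for $L_0$, and the passage to one-sided boundary traces, respect the somewhat delicate solution norms — the weighted square-function norm $\bigl(\int_{\R^n}\int_0^\infty\abs{\nabla^m v}^2\,t\,dt\,dx\bigr)^{1/2}$ for the rough problem, and, for the regular problem, the norm combining $\sup_{t>0}\doublebar{\nabla^m v(\,\cdot\,,t)}_{L^2(\R^n)}$ with the weighted square function of $\nabla^m\partial_t v$. These facts come from $t$-independence (Fourier analysis in $t$, or functional calculus for the associated family of operators on $\R^n$) together with the trace and extension results of \cite{BarHM17pB}, and they carry essentially all the weight; once they are in place, the perturbation itself is a soft Neumann-series argument. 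The rough case of the theorem is handled identically, with $L^2(\R^n)$-data and the regular norm replaced throughout by $\dot W^2_{-1}(\R^n)$-data and the weighted square-function norm, and with $\M_{\mat A}v$ interpreted via the distinguished extension $\mathcal E$ of~\eqref{C:eqn:Neumann:E}; one must additionally verify that the fixed point so obtained satisfies that extension-based definition of Neumann data, which is routine given \cite[Lemma~\ref*{B:lem:Neumann:W2}]{BarHM17pB} and the convergence statements recalled in the introduction.
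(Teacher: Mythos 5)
Your strategy is genuinely different from the paper's: you try to set up a contraction mapping $w\mapsto w_0+\mathcal T_0\bigl((\mat A_0-\mat A)\nabla^m w\bigr)$ where $\mathcal T_0$ solves the inhomogeneous $L_0$-Neumann problem with zero boundary data, whereas the paper works entirely at the level of boundary operators, using Theorem~\ref{C:thm:invertible} and Lemmas~\ref{C:lem:invertible:1}--\ref{C:lem:invertible:2} to convert well posedness in $\R^\dmn_\pm$ into invertibility of $\M_{\mat A_0}^+\D^{\mat A_0}$ on the pairs $(\dot W\!A^2_{m-1,0},(\dot W\!A^2_{m-1,1})^*)$ and $(\dot W\!A^2_{m-1,1},(\dot W\!A^2_{m-1,0})^*)$, then perturbs that invertible operator using the boundedness estimates \eqref{C:eqn:Neumann:D:1}--\eqref{C:eqn:Neumann:D:2} and the analytic dependence of $\M_{\mat A}^+\D^{\mat A}$ on $\mat A$. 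The reduction to a fixed-point problem in your first two paragraphs is correct, and the observation that $t$-independence gives $\partial_t\arr H=(\mat A_0-\mat A)\nabla^m\partial_t w$ is the right reason the full ``regular'' norm of $\arr H$ is controlled.

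However, the construction of $\mathcal T_0$ — which you yourself flag as carrying ``essentially all the weight'' — has a genuine gap that is not fixable by citation to the references you name. Your $\Pi$ is the whole-space Newton potential of $\1_+\arr H$; but for $\arr H$ in your solution spaces, $\1_+\arr H$ is \emph{not} in $L^2(\R^\dmn)$ (the weighted bound $\int_0^\infty\|\arr H(\cdot,t)\|_{L^2}^2\,t\,dt<\infty$ permits divergence as $t\to0^+$, and $\sup_t\|\arr H(\cdot,t)\|_{L^2}<\infty$ gives no global decay), so the operator $\Pi^L$ of formula~\eqref{C:eqn:newton} is not even defined on these inputs. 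More seriously, even granting some extension, the boundedness you need — that $\nabla^m\Pi^{L_0}$ maps the weighted space $L^2(\R^\dmn,|t|\,dX)$ into itself, and similarly for the ``regular'' norm — is a nontrivial claim that does not follow from $A_p$ theory (the weight $|t|$ is the endpoint $a=1$ of the $1$-dimensional power-weight range $-1<a<1$ and hence fails $A_2$), nor from the layer-potential bounds of Theorems~\ref{C:thm:square} and~\ref{C:thm:square:rough}: those control $\Pi^L$ applied to the very special inputs $\1_\pm\mat A\nabla^m F$ arising in the definition \eqref{C:dfn:D:newton}, not to general $\arr H$ in the weighted space. Without this, the bound $\doublebar{\nabla^m\mathcal T_0\arr H}\le C_0\doublebar{\arr H}$ is unavailable, the trace estimate on $\M_{\mat A_0}^-(\Pi|_{\R^\dmn_-})$ cannot be invoked (since it requires $\Pi|_{\R^\dmn_-}$ to satisfy the square-function hypothesis of Theorems~\ref{C:thm:Dirichlet:1}--\ref{C:thm:Neumann:2}), and the contraction constant $C_0\doublebar{\mat A-\mat A_0}_{L^\infty}$ is not established. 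The paper's route through Theorem~\ref{C:thm:invertible} exists precisely to avoid ever estimating the Newton potential on general inhomogeneous data; if you want to salvage your approach, you would need to prove a weighted boundedness theorem for $\nabla^m\Pi^{L_0}$ that is at least as hard as the square-function estimates of \cite{BarHM15p,BarHM17pA} themselves.
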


Notice that Theorems~\ref{C:thm:well-posed:2} and~\ref{C:thm:well-posed:1} concern only operators with self-adjoint coefficients, while Theorem~\ref{C:thm:perturbation} concerns arbitrary (non-self-adjoint) $t$-independent coefficients. In particular, combining these three results gives the following corollary.

\begin{cor} \label{C:cor:almost}
Fix some $\Lambda>\lambda>0$ and some positive integer~$m$. Then there is some $\varepsilon>0$, depending only on the dimension $\dmn$ and the constants $\Lambda$, $\lambda$ and~$m$, with the following significance.

Suppose that $L$ is an elliptic operator of the form \eqref{C:eqn:divergence} of order~$2m$, associated with coefficients $\mat A$ that are $t$-independent in the sense of formula~\eqref{C:eqn:t-independent} and are bounded and elliptic in the sense of satisfying the bounds~\eqref{C:eqn:elliptic:bounded} and~\eqref{C:eqn:elliptic}.

Let $A^*_{\alpha\beta}=\overline{A_{\beta\alpha}}$.
Suppose further that $\doublebar{\mat A-\mat A^*}_{L^\infty(\R^n)}<\varepsilon$.

Then the Neumann problems \eqref{C:eqn:neumann:regular} and the rough Neumann problem~\eqref{C:eqn:neumann:rough} are well posed for the coefficients~$\mat A$.
\end{cor}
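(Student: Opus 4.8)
The plan is to deduce Corollary~\ref{C:cor:almost} by combining Theorems~\ref{C:thm:well-posed:2} and~\ref{C:thm:perturbation}, with the self-adjoint operator $L_0$ associated to $\mat A_0 = \mat A^*$ (equivalently to the symmetrized matrix $\tfrac12(\mat A + \mat A^*)$) playing the role of the unperturbed operator. The first step is to check that $\mat A^*$ inherits the hypotheses needed to apply Theorem~\ref{C:thm:well-posed:2}: it is $t$-independent because $\mat A$ is, it satisfies the boundedness bound~\eqref{C:eqn:elliptic:bounded} with the same constant $\Lambda$, and it is self-adjoint by construction. The one genuinely nontrivial point here is that $\mat A^*$ must satisfy the \emph{boundary} ellipticity condition~\eqref{C:eqn:elliptic:slices:strong}, which is strictly stronger than the standard ellipticity~\eqref{C:eqn:elliptic}. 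I expect this to follow from a perturbation argument at the level of the coefficients: the standard G\r{a}rding-type ellipticity~\eqref{C:eqn:elliptic} with constant $\lambda$ should imply that the self-adjoint matrix $\mat A^*$ satisfies~\eqref{C:eqn:elliptic:slices:strong} with a constant depending only on $\lambda$, $\Lambda$, $m$ and $\dmn$ --- in the self-adjoint case the ``sliced'' bilinear form is controlled by the full form because one can diagonalize or integrate in the transverse frequency variable. (If instead~\eqref{C:eqn:elliptic:slices:strong} does \emph{not} follow from~\eqref{C:eqn:elliptic} alone, then this is the main obstacle and the corollary would need to be restated with~\eqref{C:eqn:elliptic:slices:strong} as a hypothesis on $\mat A$; I will assume the intended reading is that it does follow, since otherwise the statement as given is false.)

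Granting that, Theorem~\ref{C:thm:well-posed:2} applies to $\mat A^* = \mat A_0$ and gives well posedness of the Neumann problem~\eqref{C:eqn:neumann:regular} in the upper half-space. To invoke Theorem~\ref{C:thm:perturbation} I also need well posedness of the lower half-space problem~\eqref{C:eqn:neumann:regular:lower} for $\mat A_0$. This follows by the reflection $t \mapsto -t$: if $w(x,t)$ solves $L_0 w = 0$ in $\R^\dmn_-$ with the stated estimates, then $\tilde w(x,t) = w(x,-t)$ solves the operator obtained from $L_0$ by the sign change in the transverse derivatives, which for a self-adjoint $t$-independent operator of the even order $2m$ is again an operator of the same class --- the coefficients $A_{\alpha\beta}$ with $\alpha_\dmn + \beta_\dmn$ odd pick up a sign, but these are balanced by the duality structure, and in any case the reflected operator again satisfies~\eqref{C:eqn:elliptic:slices:strong} and is self-adjoint, so Theorem~\ref{C:thm:well-posed:2} applies to it in the upper half-space and unwinds to well posedness of~\eqref{C:eqn:neumann:regular:lower} for $\mat A_0$. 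The same reflection handles the rough version~\eqref{C:eqn:neumann:rough} in both half-spaces.

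With both hypotheses of Theorem~\ref{C:thm:perturbation} verified for $\mat A_0 = \mat A^*$, the theorem produces an $\varepsilon > 0$, depending only on the ellipticity constants $\lambda$, $\Lambda$ (and through the previous steps on $m$ and $\dmn$), such that $\doublebar{\mat A - \mat A^*}_{L^\infty(\R^n)} < \varepsilon$ implies well posedness of~\eqref{C:eqn:neumann:regular} for $\mat A$; the last clause of Theorem~\ref{C:thm:perturbation} gives the same conclusion for the rough Neumann problem~\eqref{C:eqn:neumann:rough}. One should note that the constant $C$ appearing in~\eqref{C:eqn:neumann:regular} and~\eqref{C:eqn:neumann:regular:lower} for the operator $\mat A_0$ is, by Theorem~\ref{C:thm:well-posed:2}, itself controlled by $\lambda$, $\Lambda$, $m$, $\dmn$, so the resulting $\varepsilon$ indeed depends only on the advertised quantities, and in particular is uniform over all $\mat A$ with the given ellipticity constants. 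I expect the main obstacle to be precisely the verification that the symmetrization $\mat A^*$ satisfies the stronger boundary ellipticity~\eqref{C:eqn:elliptic:slices:strong}; everything else is a bookkeeping assembly of results already in hand, together with the elementary reflection argument for the lower half-space.
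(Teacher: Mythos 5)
Your plan is the same one the paper actually follows: take $\mat A_0 = \tfrac12(\mat A+\mat A^*)$ as the self-adjoint base operator, apply Theorem~\ref{C:thm:well-posed:2} to $\mat A_0$, then use Theorem~\ref{C:thm:perturbation} to pass to $\mat A$. The lower half-space hypothesis of Theorem~\ref{C:thm:perturbation} is handled by the reflection $t\mapsto -t$, as you say, and this is exactly the bookkeeping the paper has in mind.

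Two points, one small and one substantive. Small: $\mat A^*$ is \emph{not} the same as $\tfrac12(\mat A+\mat A^*)$, and in general $\mat A^*$ is not self-adjoint (its adjoint is $\mat A$, not itself), so it cannot serve as $\mat A_0$; the matrix you must use is the symmetrization. Your subsequent reasoning goes through with $\mat A_0=\tfrac12(\mat A+\mat A^*)$, and the required smallness $\doublebar{\mat A-\mat A_0}_{L^\infty}=\tfrac12\doublebar{\mat A-\mat A^*}_{L^\infty}$ is fine.

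The substantive point is that the concern you flagged is exactly right, and your speculative resolution (that~\eqref{C:eqn:elliptic:slices:strong} follows from~\eqref{C:eqn:elliptic} by ``integrating in the transverse frequency variable'') does \emph{not} hold. The paper's own Remark~\ref{C:rmk:biharmonic} is a counterexample: the constant real symmetric matrices $\mat A_\rho$ associated to $\Delta^2$ all satisfy~\eqref{C:eqn:elliptic} (and~\eqref{C:eqn:elliptic:slices:weak}), yet for $\rho=1$ the $L^2$-Neumann problem in the half-space is ill-posed; since $\mat A_1$ is self-adjoint we would have $\doublebar{\mat A_1-\mat A_1^*}_{L^\infty}=0<\varepsilon$, contradicting the corollary taken literally. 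Note also that since $\re\langle\arr F,\mat A\arr F\rangle = \langle\arr F,\tfrac12(\mat A+\mat A^*)\arr F\rangle$ pointwise, $\mat A$ satisfies~\eqref{C:eqn:elliptic:slices:strong} iff its symmetrization does, so there is no room to soften the issue by choosing a ``better'' $\mat A_0$. The paper's one-line proof of the corollary does not address this either; the statement should be read (or restated) with~\eqref{C:eqn:elliptic:slices:strong} in place of~\eqref{C:eqn:elliptic} among the hypotheses on $\mat A$, which is precisely the alternative reading you offered. Once that hypothesis is in place, your assembly of Theorems~\ref{C:thm:well-posed:2} and~\ref{C:thm:perturbation}, with the reflection argument for the lower half-space and the accounting that $\varepsilon$ depends only on $\lambda$, $\Lambda$, $m$, $\dmn$, is correct.
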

The $\varepsilon=0$ case of this corollary is  Theorem~\ref{C:thm:well-posed:2} or~\ref{C:thm:well-posed:1}; by letting $\mat A_1=\mat A$ and letting $\mat A_0$ be a nearby self adjoint matrix (for example, $\mat A_0=\frac{1}{2}\mat A+\frac{1}{2}\mat A^*$), we obtain Corollary~\ref{C:cor:almost} from Theorem~\ref{C:thm:perturbation}.

We now turn to the history of the Neumann problem. We begin with the case of second-order operators, and in particular with harmonic functions (that is, the case $L=-\Delta$). In \cite{JerK81B} Jerison and Kenig established well posedness of the Neumann problem for harmonic functions in Lipschitz domains with $L^2$ boundary data. (They established well posedness with nontangential maximal estimates, not the square-function estimates used in this paper; however, as shown in \cite{Dah80A}, for harmonic functions the two estimates are equivalent.) This was extended to $L^p$ boundary data for $1<p<2+\varepsilon$ in \cite{DahK87}.  Here $\varepsilon$ is a (possibly small) positive number that depends on the Lipschitz character of the domain under consideration.

Turning to more general second order operators, in \cite{KenP93} Kenig and Pipher established well posedness of the $L^p$-Neumann problem (with nontangential estimates), $1<p<2+\varepsilon$, for solutions to $\Div \mat A\nabla u=0$, where $\mat A$ is a real symmetric radially constant matrix, in the unit ball. The same arguments yield well posedness of the Neumann problem for real symmetric $t$-independent coefficients in the upper half-space. (In the case of second-order operators, but not higher order operators, a straightforward change of variables argument allows an immediate generalization from results for radially independent coefficients in the unit ball to radially independent coefficients in starlike Lipschitz domains, or from results for $t$-independent coefficients in the half-space to $t$-independent coefficients in Lipschitz graph domains.)
Again, for $t$-independent coefficients in the second order case, the square function estimates used in this paper can often be shown to be equivalent to the nontangential estimates common in the theory; see \cite{DahJK84}, \cite[Theorem~1.7]{HofKMP15A} and \cite[Theorem~2.3]{AusA11}.

The Neumann problem is known to be well posed for a few other special classes of second order operators.
In two dimensions the $L^p$-Neumann problem is well posed for real nonsymmetric $t$-independent coefficients in the upper half-plane provided $1<p<1+\varepsilon$; see \cite{KenR09}. If $\mat A$ is of block form (that is, if $A_{j\pdmn}=A_{\pdmn j}=0$ for all $1\leq j\leq n$), then well posedness of the Neumann problem in the half-space follows from the positive resolution of the Kato square root conjecture \cite{AusHLMT02}; see \cite[Remark~2.5.6]{Ken94}. (The result \cite{KenR09} for real coefficients is preserved under a change of variables and so is also valid in Lipschitz graph domains, but the block form is not preserved by a change of variables and so is not known to generalize to Lipschitz domains.)

We may also consider perturbation results for $t$-independent coefficients.
If $\mat A$ is $t$-independent and if $\doublebar{\mat A-\mat A_0}_{L^\infty}$ is small enough, for some $t$-independent matrix $\mat A_0$ that is real symmetric (or complex and self-adjoint), of block form, or constant, then the $L^2$-Neumann problem for $\Div \mat A\nabla$ is well posed in the half-space; see \cite{AusAH08}, or \cite{AlfAAHK11} under a few additional assumptions. If $\mat A_0$ is an arbitrary $t$-independent coefficient matrix for which the $L^2$-Neumann problem is well posed, then the $L^2$-Neumann problem for $\mat A$ is well posed; see \cite{AusAM10}, or again \cite{AlfAAHK11} under some additional assumptions. If $\mat A_0$ is real symmetric, then by \cite{HofMitMor15} the $L^p$-Neumann problem is well posed for $\mat A$ provided $1<p<2+\varepsilon$. (In fact, they showed that well posedness extends to the range $1-\varepsilon<p\leq 1$ if we consider boundary data in the Hardy space $H^p$ rather than the Lebesgue space~$L^p$.) In two dimensions, if $\mat A_0$ is real but not symmetric (that is, if $\mat A_0$ is as in \cite{KenR09}), then by \cite{Bar13} the $L^p$-Neumann problem is well posed for $1<p<1+\varepsilon$.

The $t$-independent case may be viewed as a starting point for certain $t$-dependent perturbations; see \cite{KenP93,KenP95,AusA11,AusR12,HofMayMou15}.

Very few results are known concerning well posedness of the higher order Neumann problem with boundary data in a Lebesgue space. Some results are available in the case of the biharmonic operator $\Delta^2$. In particular, the $L^p$-Neumann problem for $1<p<\infty$ was shown to be well posed in $C^1$ domains in $\R^2$ in \cite{CohG85}, and in domains of arbitrary dimension whose unit outward normal lies in $VMO$ in~\cite{MitM13B}. Turning to the case of Lipschitz domains $\Omega\subset\R^\dmn$, the $L^p$-Neumann problem was shown to be well posed in \cite{Ver05} for $2-\varepsilon<p<2+\varepsilon$, and in \cite{She07B} for $\max(1,2n/(n+2)-\varepsilon)<p<2+\varepsilon$.

We now turn to the Neumann problem with  boundary data in negative smoothness spaces. Well posedness of the Neumann problem with boundary data in the \emph{fractional} negative smoothness space (the Besov space) $\dot B^2_{-1/2}(\partial\Omega)$ follows from the Lax-Milgram theorem. Well posedness of the Neumann problem with boundary data in the Besov space $\dot B^p_s(\partial\Omega)$ for certain values of $p$, $s$ with $-1<s<0$ and $0<p<\infty$ was established in \cite{FabMM98,Zan00,May05,MayMit04A} (for harmonic functions), in \cite{BarM16A} (for second-order operators with $t$-independent coefficients for which the $L^p$-Neumann problem is well posed, for example, for self-adjoint coefficients or for real coefficients in two dimensions), in \cite{MitM13B} (the biharmonic equation), \cite{MitM13A} (constant coefficient equations of order $2m$, for $m\geq 1$) and \cite{Bar16} (for arbitrary elliptic bounded measurable coefficients).

We conclude our discussion of the history of the Neumann problem with the case of boundary data in  the negative integer smoothness space $\dot W^p_{-1}(\R^n)$. For second-order $t$-independent operators $\Div \mat A\nabla$, this problem was investigated in \cite{AusM14,AusS14p}. In \cite{AusM14}, the $\dot W^p_{-1}(\R^n)$-Neumann problem was shown to be equivalent to the $L^{p'}$-Neumann problem for $\Div \mat A^*\nabla$, where $\mat A^*$ is the adjoint matrix; thus, in particular the $\dot W^2_{-1}$-Neumann problem is well posed for self adjoint coefficients, coefficients in block form, constant coefficients, or small $t$-independent $L^\infty$ perturbations thereof. \cite{AusS14p} treated the converse problem, that is, the problem of trace results for solutions $v$ that satisfy the bound \eqref{C:eqn:rough:estimate} or similar results, and thereby proved some further perturbative results.

We remark that the approach of \cite{AusM14,AusS14p} is similar to the approach of this paper. That is, let $\D^{\mat A}$ and $\s^L$ be the double and single layer potentials associated to our coefficients~$\mat A$ (to be defined in Section~\ref{C:sec:dfn:potentials}); we remark that these operators take as input arrays of functions or distributions $\arr f$ or $\arr g$ defined on $\R^n$ and return functions $\D^{\mat A}\arr f$ or $\s^L\arr g$ that satisfy $L(\D^{\mat A}\arr f)=0$ and $L(\s^L\arr g)=0$ in $\R^\dmn_+$ and $\R^\dmn_-$. If $u$ is a solution to $Lu=0$ in $\R^\dmn_+$, then let $\Tr_{m-1}^+u$ and $\M_{\mat A}^+$ denote the Dirichlet and Neumann boundary values of~$u$. Given certain estimates on~$u$ (see Section~\ref{C:sec:green}), we have the Green's formula
\begin{equation}\label{C:eqn:green:introduction}
u=-\D^{\mat A}(\Tr_{m-1}^+  u) +\s^{L}(\M_{\mat A}^+  u) \quad\text{in $\R^\dmn_+$}
.\end{equation}
Given bounds on $\D^{\mat A}$ and $\s^L$ established in \cite{BarHM15p,BarHM17pA} (see Section~\ref{C:sec:potentials:bounds}), we have the estimates
\begin{align}
\label{C:eqn:AusMS:1}
\int_{\R^\dmnMinusOne}\int_0^\infty \abs{\nabla^m v(x,t)}^2 \,t\,dt\,dx
&\leq C\doublebar{\Tr_{m-1}^+ v}_{L^2(\R^n)}^2+C\doublebar{\M_{\mat A}^+ v}_{\dot W^2_{-1}(\R^n)}^2
,\\
\label{C:eqn:AusMS:2}
\int_{\R^\dmnMinusOne}\int_0^\infty \abs{\nabla^m\partial_t w(x,t)}^2 \,t\,dt\,dx
&+\sup_{t>0}\doublebar{\nabla^m w(\,\cdot\,,t)}_{ L^2(\R^n)}^2
\\\nonumber
&\leq C\doublebar{\Tr_{m-1}^+ w}_{\dot W^2_1(\R^n)}^2 +C\doublebar{\M_{\mat A}^+ w}_{L^2(\R^\dmnMinusOne)}^2
.\end{align}
The trace results of \cite{BarHM17pB} (see Section~\ref{C:sec:trace}) give the reverse inequalities. We will exploit this equivalence of norms to prove well posedness. The approach of \cite{AusM14,AusS14p} is also to prove an equivalence between tent space estimates on a solution $u$ and certain norms of the Dirichlet and Neumann boundary values $u\big\vert_{\partial\R^\dmn_+}$ and $\nu\cdot \mat A\nabla u$. Their approach is mediated by semigroups rather than layer potentials; however, we remark that by \cite{Ros13} their semigroups are in some sense equivalent to layer potentials.

The outline of this paper is as follows.

In Section~\ref{C:sec:dfn} we will define our terminology. In particular, we will define the layer potentials $\D^{\mat A}$ and~$\s^L$. In Section~\ref{C:sec:known} we will summarize some known results: regularity of solutions to $Lu=0$ from \cite{Bar16} and \cite{AlfAAHK11,BarHM15p}, boundedness of layer potentials from \cite{BarHM17pA}, and trace results from \cite{BarHM17pB}, that is, bounds on the Dirichlet and Neumann boundary values of a solution $u$ to $Lu=0$. In Section~\ref{C:sec:boundary} we will prove some additional results concerning boundary values of solutions and of layer potentials, in particular the Green's formula~\eqref{C:eqn:green:introduction}.

In Section~\ref{C:sec:rellich} we will prove a one-sided version of the Rellich identity. This will allow us to control the Dirichlet boundary values of a solution $w$ to $Lw=0$ that satisfies the estimates given in the problem~\eqref{C:eqn:neumann:regular}. This combined with the estimate~\eqref{C:eqn:AusMS:2} establishes uniqueness of solutions $w$ to the Neumann problem \eqref{C:eqn:neumann:regular} and yields the estimate
\begin{equation*}\int_{\R^\dmnMinusOne}\int_0^\infty \abs{\nabla^m\partial_t w(x,t)}^2 \,t\,dt\,dx
+\sup_{t>0}\doublebar{\nabla^m w(\,\cdot\,,t)}_{ L^2(\R^n)}^2
\leq C\doublebar{\M_{\mat A}^+ w}_{L^2(\R^\dmnMinusOne)}^2
\end{equation*}
in problem~\eqref{C:eqn:neumann:regular}.

In Section~\ref{C:sec:special} we will show existence of solutions to the Neumann problem~\eqref{C:eqn:neumann:regular} for a particular choice of coefficients~$\mat A_0$, thus completing the proof of Theorem~\ref{C:thm:well-posed:2} for that choice of coefficients.

In order to prove Theorems~\ref{C:thm:well-posed:1} and~\ref{C:thm:perturbation}, we will need some additional properties of layer potentials (by now well known in the second order case and generalized to the higher order case in \cite{Bar17p}). This approach also provides a straightforward way to generalize Theorem~\ref{C:thm:well-posed:2} from the specific coefficients~$\mat A_0$ to full generality. We will state these results in Section~\ref{C:sec:invertible:known} and apply them in Section~\ref{C:sec:invertible:application}.

\subsection*{Acknowledgements}
We would like to thank the American Institute of Mathematics for hosting the SQuaRE workshop on ``Singular integral operators and solvability of boundary problems for elliptic equations with rough coefficients,'' and the Mathematical Sciences Research Institute for hosting a Program on Harmonic Analysis,  at which many of the results and techniques of this paper were discussed.

\section{Definitions}
\label{C:sec:dfn}

In this section, we will provide precise definitions of the notation and concepts used throughout this paper. 

We mention that throughout this paper, we will work with elliptic operators~$L$ of order~$2m$ in the divergence form \eqref{C:eqn:divergence} acting on functions defined on~$\R^\dmn$.
We let $\R^\dmn_+$ and $\R^\dmn_-$ denote the upper and lower half-spaces $\R^n\times (0,\infty)$ and $\R^n\times(-\infty,0)$; we will identify $\R^n$ with $\partial\R^\dmn_\pm$.


\subsection{Multiindices and arrays of functions}

We will reserve the letters $\alpha$, $\beta$, $\gamma$, $\zeta$ and~$\xi$ to denote multiindices in $\N^\dmn$. (Here $\N$ denotes the nonnegative integers.) If $\zeta=(\zeta_1,\zeta_2,\dots,\zeta_\dmn)$ is a multiindex, then we define $\abs{\zeta}$, $\partial^\zeta$ 
in the usual ways, as $\abs{\zeta}=\zeta_1+\zeta_2+\dots+\zeta_\dmn$, $\partial^\zeta=\partial_{x_1}^{\zeta_1}\partial_{x_2}^{\zeta_2} \cdots\partial_{x_\dmn}^{\zeta_\dmn}$. 

We will routinely deal with arrays $\arr F=\begin{pmatrix}F_{\zeta}\end{pmatrix}$ of numbers or functions indexed by multiindices~$\zeta$ with $\abs{\zeta}=k$ for some~$k\geq 0$.
In particular, if $\varphi$ is a function with weak derivatives of order up to~$k$, then we view $\nabla^k\varphi$ as such an array.

The inner product of two such arrays of numbers $\arr F$ and $\arr G$ is given by
\begin{equation*}\bigl\langle \arr F,\arr G\bigr\rangle =
\sum_{\abs{\zeta}=k}
\overline{F_{\zeta}}\, G_{\zeta}.\end{equation*}
If $\arr F$ and $\arr G$ are two arrays of functions defined in a set $\Omega$ in Euclidean space, then the inner product of $\arr F$ and $\arr G$ is given by
\begin{equation*}\bigl\langle \arr F,\arr G\bigr\rangle_\Omega =
\sum_{\abs{\zeta}=k}
\int_{\Omega} \overline{F_{\zeta}(X)}\, G_{\zeta}(X)\,dX.\end{equation*}

We let $\vec e_j$ be the unit vector in $\R^\dmn$ in the $j$th direction; notice that $\vec e_j$ is a multiindex with $\abs{\vec e_j}=1$. We let $\arr e_{\zeta}$ be the ``unit array'' corresponding to the multiindex~$\zeta$; thus, $\langle \arr e_{\zeta},\arr F\rangle = F_{\zeta}$.

We will let $\nabla_\pureH$ denote either the gradient in~$\R^n$, or the $n$ horizontal components of the full gradient~$\nabla$ in $\R^\dmn$. (Because we identify $\R^n$ with $\partial\R^\dmn_\pm\subset\R^\dmn$, the two uses are equivalent.) If $\zeta$ is a multiindex with $\zeta_\dmn=0$, we will occasionally use the terminology $\partial_\pureH^\zeta$ to emphasize that the derivatives are taken purely in the horizontal directions.

\subsection{Elliptic differential operators and their bounds}

Let $\mat A = \begin{pmatrix} A_{\alpha\beta} \end{pmatrix}$ be a matrix of measurable coefficients defined on $\R^\dmn$, indexed by multtiindices $\alpha$, $\beta$ with $\abs{\alpha}=\abs{\beta}=m$. If $\arr F$ is an array, then $\mat A\arr F$ is the array given by
\begin{equation*}(\mat A\arr F)_{\alpha} =
\sum_{\abs{\beta}=m}
A_{\alpha\beta} F_{\beta}.\end{equation*}

We will consider coefficients that satisfy the G\r{a}rding inequality
\begin{align}
\label{C:eqn:elliptic}
\re {\bigl\langle\nabla^m \varphi,\mat A\nabla^m \varphi\bigr\rangle_{\R^\dmn}}
&\geq
	\lambda\doublebar{\nabla^m\varphi}_{L^2(\R^\dmn)}^2
	\quad\text{for all $\varphi\in\dot W^2_m(\R^\dmn)$}
\end{align}
and the bound
\begin{align}
\label{C:eqn:elliptic:bounded}
\doublebar{\mat A}_{L^\infty(\R^\dmn)}
&\leq
	\Lambda
\end{align}
for some $\Lambda>\lambda>0$.
In this paper we will focus exclusively on coefficients that are $t$-inde\-pen\-dent, that is, that satisfy formula~\eqref{C:eqn:t-independent}.

We let $L$ be the $2m$th-order divergence-form operator associated with~$\mat A$. That is, we say that $L u=0$ in~$\Omega$ in the weak sense if, for every $\varphi$ smooth and compactly supported in~$\Omega$, we have that
\begin{equation}
\label{C:eqn:L}
\bigl\langle\nabla^m\varphi, \mat A\nabla^m u\bigr\rangle_\Omega
=\sum_{\abs{\alpha}=\abs{\beta}=m}
\int_{\Omega}\partial^\alpha \bar \varphi\, A_{\alpha\beta}\,\partial^\beta u
=
0
.
\end{equation}

Throughout the paper we will let $C$ denote a constant whose value may change from line to line, but which depends only on the dimension $\dmn$, the ellipticity constants $\lambda$ and $\Lambda$ in the bounds \eqref{C:eqn:elliptic} and~\eqref{C:eqn:elliptic:bounded}, and the order~$2m$ of our elliptic operators. Any other dependencies will be indicated explicitly.

We will need a stronger ellipticity condition. 
Notice that if $\mat A$ is $t$-indepen\-dent, then the bound \eqref{C:eqn:elliptic} implies that if $\varphi$ is constant in the $t$-direction, then
\begin{equation}
\label{C:eqn:elliptic:slices:weak}
\re\langle \nabla^m \varphi,\mat A\nabla^m\varphi\rangle_{\R^n} 
\geq \lambda\doublebar{\nabla_\pureH^m \varphi}_{L^2(\R^n)}^2
.\end{equation}
We will establish well posedness of the Neumann problem only for coefficients that satisfy the stronger ellipticity condition
\begin{equation}
\label{C:eqn:elliptic:slices:strong}
\re\langle \nabla^m \varphi(\,\cdot\,,t),\mat A\nabla^m\varphi(\,\cdot\,,t)\rangle_{\R^n} 
\geq 
	\lambda\doublebar{
	\nabla^m\varphi(\,\cdot\,,t)}_{L^2(\R^n)}^2
\end{equation}
for all $\varphi$ smooth and compactly supported in~$\R^\dmn$ and all $t\in\R$. 

\begin{rmk}\label{C:rmk:biharmonic}
For many applications in the theory, the ellipticity condition \eqref{C:eqn:elliptic} suffices. See, for example, the construction of solutions in $\dot W^2_m(\Omega)$ to the Dirichlet and Neumann problems via the Lax-Milgram theorem, the solution to the Kato square root problem in \cite{AusHMT01}, the well posedness of the $L^2$ and $\dot W^2_1$-Dirichlet problems in \cite{PipV95B}, the boundedness of layer potentials in \cite{BarHM15p,BarHM17pA} (see Section~\ref{C:sec:potentials:bounds}) and the trace theorems of \cite{BarHM17pB} (see Section~\ref{C:sec:trace}).

However, the ellipticity condition \eqref{C:eqn:elliptic} does not suffice to yield well posedness of the Neumann problem even for very nice operators.

As a simple example, let $L$ denote the biharmonic operator $\Delta^2$, and observe that we may associate $L$ to any member $\mat A_\rho$ of a family of real symmetric coefficient matrices; specifically, if $\rho\in\R$, then let $\mat A_\rho$ be such that
\begin{equation*}\langle \nabla^2 \psi(X), \mat A_\rho\nabla^2\varphi(X) \rangle = \rho\langle \Delta \psi(X), \Delta \varphi(X)\rangle 
+(1-\rho)\sum_{j,k=1}^\dmn\langle \partial_{jk}^2\psi(X), \partial_{jk}^2\varphi(X)\rangle\end{equation*}
for any $X\in\R^\dmn$ and any smooth test functions $\varphi$, $\psi$. In the theory of elasticity (see, for example, \cite{Nad63}), the constant $\rho$ is referred to as the Poisson ratio. 

The bounds \eqref{C:eqn:elliptic} and \eqref{C:eqn:elliptic:slices:weak}  are valid regardless of~$\rho$. Furthermore, the choice of $\rho$ does not affect the form of the Dirichlet problem
\begin{equation*}\Delta^2 u=0 \text{ in $\Omega$},\quad \nabla u=\arr f\text{ on $\partial\Omega$}\end{equation*}
and it is known (see \cite{DahKV86,Ver90}) that the Dirichlet problem is well posed in Lipschitz domains with boundary data in $\dot W\!A^2_{m-1,0}(\partial\Omega)$ or $\dot W\!A^2_{m-1,1}(\partial\Omega)$.

However, the Neumann boundary values of a biharmonic function $u$ do depend on the choice of~$\rho$, and the Neumann problem is not well posed for all choices of~$\rho$. In particular, an elementary argument involving the Fourier transform shows that if $\rho=1$ or $\rho=-3$ then the Neumann problem for the biharmonic operator is ill-posed in the half-space, and in \cite{Ver05} the $L^2$-Neumann problem for the Laplacian was shown to be ill-posed in certain planar Lipschitz domains in $\R^\dmn$ for $\rho<-1$ or $\rho\geq 1$. 

Thus, some ellipticity condition beyond~\eqref{C:eqn:elliptic} must be imposed upon the coefficients~$\mat A$; the bound \eqref{C:eqn:elliptic:slices:strong} is the weakest bound that will allow our proof of the Rellich identity to be valid.

\end{rmk}



\subsection{Function spaces and boundary data}

Let $\Omega\subseteq\R^n$ or $\Omega\subseteq\R^\dmn$ be a measurable set in Euclidean space. We will let $L^p(\Omega)$ denote the usual Lebesgue space with respect to Lebesgue measure with norm given by
\begin{equation*}\doublebar{f}_{L^p(\Omega)}=\biggl(\int_\Omega \abs{f(x)}^p\,dx\biggr)^{1/p}.\end{equation*}

If $\Omega$ is a connected open set and $m\geq 1$ is an integer, then we let the homogeneous Sobolev space $\dot W^p_m(\Omega)$ be the space of equivalence classes of functions $u$ that are locally integrable in~$\Omega$ and have weak derivatives in $\Omega$ of order up to~$m$ in the distributional sense, and whose $m$th gradient $\nabla^m u$ lies in $L^p(\Omega)$. Two functions are equivalent if their difference is a polynomial of order~$m-1$.
We impose the norm 
\begin{equation*}\doublebar{u}_{\dot W^p_m(\Omega)}=\doublebar{\nabla^m u}_{L^p(\Omega)}.\end{equation*}
Then $u$ is equal to a polynomial of order $m-1$ (and thus equivalent to zero) if and only if its $\dot W^p_m(\Omega)$-norm is zero. 
We let $L^p_loc(\Omega)$ and $\dot W^p_{k,loc}(\Omega)$ denote functions that lie in $L^p(U)$ (or whose gradients lie in $L^p(U)$) for any bounded open set $U$ with $\overline U\subsetneq\Omega$.

If $1<p<\infty$, we will let $\dot W^p_{-1}(\R^n)$ be the space of bounded linear operators on $\dot W^{p'}_1(\R^n)$, where $1/p+1/p'=1$. Notice that formally, if $g\in \dot W^p_{-1}(\R^n)$ then $g=\nabla_\pureH\cdot \vec h$ for some $\vec h\in L^p(\R^n)$, and conversely that if $h\in L^p(\R^n)$ then $\nabla_\pureH h \in \dot W^p_{-1}(\R^n)$.

%
%


\subsubsection{Dirichlet boundary data and spaces}

In this paper we will establish well posedness of the Neumann problem, and so we are very interested in the Neumann boundary values of solutions. However, Neumann boundary values are defined by duality with Dirichlet boundary values, and so we will need terminology for those values as well.

If $u$ is defined in $\R^\dmn_+$, we let its Dirichelt boundary values be, loosely, the boundary values of the gradient $\nabla^{m-1} u$. More precisely, we let the Dirichlet boundary values be the array of functions $\Tr_{m-1}u=\Tr_{m-1}^+ u$, indexed by multiindices $\gamma$ with $\abs\gamma=m-1$, and given by
\begin{equation}
\label{C:eqn:Dirichlet}
\begin{pmatrix}\Tr_{m-1}^+  u\end{pmatrix}_{\gamma}
=f \quad\text{if}\quad
\lim_{t\to 0^+} \doublebar{\partial^\gamma u(\,\cdot\,,t)-f}_{L^1(K)}=0
\end{equation}
for all compact sets $K\subset\R^n$. If $u$ is defined in $\R^\dmn_-$, we define $\Tr_{m-1}^- u$ similarly. 
We remark that if $\nabla^m u\in L^1(K\times(0,\varepsilon))$ for any such $K$ and some $\varepsilon>0$, then $\Tr_{m-1}^+u$ exists, and furthermore $\begin{pmatrix}\Tr_{m-1}^+  u\end{pmatrix}_{\gamma}=\Trace \partial^\gamma u$ where $\Trace$ denotes the traditional trace in the sense of Sobolev spaces.

We will be concerned with boundary values in Lebesgue or Sobolev spaces. However, observe that the different components of $\Tr_{m-1}u$ arise as derivatives of a common function, and thus must satisfy certain compatibility conditions. We will define the Whitney spaces of functions that satisfy these compatibility conditions and have certain smoothness properties as follows.
\begin{defn} \label{C:dfn:Whitney}
Let 
\begin{equation*}\mathfrak{D}=\{\Tr_{m-1}\varphi:\varphi\text{ smooth and compactly supported in $\R^\dmn$}\}.\end{equation*}

We let $\dot W\!A^2_{m-1,0}(\R^n)$ be the completion of the set $\mathfrak{D}$ under the $L^2$ norm. 

We let $\dot W\!A^2_{m-1,1}(\R^n)$ be the completion of $\mathfrak{D}$ under the $\dot W^2_1(\R^n)$ norm, that is, under the norm $\doublebar{\arr f}_{\dot W\!A^2_{m-1,1}(\R^n)}=\doublebar{\nabla_\pureH \arr f}_{L^2(\R^n)}$. 

Finally, we let $\dot W\!A^2_{m-1,1/2}(\R^n)$ be the completion of $\mathfrak{D}$ under the norm 
\begin{equation}
\label{C:eqn:Besov:norm}
\doublebar{\arr f}_{\dot W\!A^2_{m-1,1/2}(\R^n)} = \biggl(\sum_{\abs\gamma=m-1}\int_{\R^n} \abs{\widehat {f_\gamma}(\xi)}^2\,\abs{\xi}\,d\xi\biggr)^{1/2}
\end{equation}
where $\widehat f$ denotes the Fourier transform of~$f$.
\end{defn}

We are concerned with the spaces $\dot W\!A^2_{m-1,0}(\R^n)$ and $\dot W\!A^2_{m-1,1}(\R^n)$ because we intend to prove well posedness of the Neumann problem with boundary data in their dual spaces $(\dot W\!A^2_{m-1,0}(\R^n))^*$ and $(\dot W\!A^2_{m-1,1}(\R^n))^*$. We will build on the theory of solutions $u$ to elliptic equations with $u\in \dot W^2_m(\R^\dmn_+)$; the space $\dot W\!A^2_{m-1,1/2}(\R^n)$ is important to that theory, as seen in the following lemma.

\begin{lem}\label{C:lem:Besov} 
If $u\in \dot W^2_m(\R^\dmn_+)$ then $\Tr_{m-1}^+u\in \dot W\!A^2_{m-1,1/2}(\R^n)$, and furthermore
\begin{equation*}\doublebar{\Tr_{m-1}^+u}_{\dot W\!A^2_{m-1,1/2}(\R^n)}\leq C \doublebar{\nabla^m u}_{L^2(\R^\dmn_+)}.\end{equation*}
Conversely, if $\arr f\in \dot W\!A^2_{m-1,1/2}(\R^n)$, then there is some $F\in \dot W^2_m(\R^\dmn_+)$ such that $\Tr_{m-1}^+F=\arr f$ and such that
\begin{equation*}\doublebar{\nabla^m F}_{L^2(\R^\dmn_+)}\leq C \doublebar{\arr f}_{\dot W\!A^2_{m-1,1/2}(\R^n)}.\end{equation*}
\end{lem}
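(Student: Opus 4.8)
The plan is to prove Lemma~\ref{C:lem:Besov} by passing to the Fourier transform in the horizontal variable $x\in\R^n$ and using the standard trace theory for the upper half-space, exploiting the fact that $\dot W\!A^2_{m-1,1/2}(\R^n)$ is precisely the homogeneous Besov-type space $\dot B^{2}_{1/2}$ applied componentwise to $\Tr_{m-1}$. For the first (trace) direction, I would first reduce to the case $u\in C^\infty_0(\overline{\R^\dmn_+})$ (or some dense subclass) by density, then write, for each multiindex $\gamma$ with $\abs\gamma=m-1$,
\[
\partial^\gamma u(x,0) = -\int_0^\infty \partial_t\bigl(\partial^\gamma u(x,t)\bigr)\,dt,
\]
take the horizontal Fourier transform, and estimate $\abs{\widehat{\partial^\gamma u}(\xi,0)}^2\abs\xi$ by a Cauchy--Schwarz argument against $\int_0^\infty \abs{\widehat{\partial_t\partial^\gamma u}(\xi,t)}^2\,t\,dt$ together with $\int_0^\infty\abs{\widehat{\partial^\gamma u}(\xi,t)}^2\abs\xi^2\,dt$; integrating in $\xi$ and recognizing that $\abs\xi\cdot\abs{\widehat{\partial^\gamma u}}$ and $\abs{\widehat{\partial_t\partial^\gamma u}}\,t^{1/2}$ are controlled by $\doublebar{\nabla^m u}_{L^2(\R^\dmn_+)}$ (since $\abs\gamma=m-1$, so $\partial^\gamma u$ with one more horizontal or vertical derivative is a component of $\nabla^m u$) gives the claimed bound. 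One has to be a little careful that the decomposition $\partial^\gamma u$ "at $t=\infty$" vanishes; this is handled on the dense subclass and then passes to the limit, and the conclusion that $\Tr_{m-1}^+u$ actually lies in the completion $\dot W\!A^2_{m-1,1/2}(\R^n)$ (as opposed to merely satisfying the quantitative bound) follows because $\Tr_{m-1}$ of the approximating smooth functions lies in $\mathfrak D$ and converges in the relevant norm.

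For the converse (extension) direction, given $\arr f=(f_\gamma)_{\abs\gamma=m-1}\in\dot W\!A^2_{m-1,1/2}(\R^n)$, I would exploit the compatibility structure: since $\arr f$ lies in the completion of $\mathfrak D=\{\Tr_{m-1}\varphi\}$, there is a scalar potential structure, and in fact it suffices to produce an extension of the single ``top'' function together with all lower-order data consistently. Concretely, I would use an extension operator of Poisson- or heat-semigroup type: set
\[
F(x,t)=\sum_{j=0}^{m-1}\frac{t^j}{j!}\,P_t f_{(0,\dots,0,j)}(x)
\]
(with a suitable choice of horizontal convolution semigroup $P_t$, e.g. the one built from $e^{-t\abs\xi}$ or the heat kernel, applied to the ``vertical-only'' components of $\arr f$, which by the compatibility conditions determine all the others), and check by a direct Fourier-transform computation that $\Tr_{m-1}^+F=\arr f$ and that $\doublebar{\nabla^m F}_{L^2(\R^\dmn_+)}^2\le C\sum_\gamma\int\abs{\widehat{f_\gamma}}^2\abs\xi\,d\xi$. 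The key computation here is that a typical component of $\nabla^m F$ is, up to lower-order terms absorbed by the same estimate, of the form $t^k$ times $\abs\xi^\ell$ times $\widehat{P_t f}$, and $\int_0^\infty \abs\xi^{2\ell}t^{2k}e^{-2t\abs\xi}\,dt\sim \abs\xi^{2\ell-2k-1}$, which matches the $\abs\xi$ weight exactly when the total order is $m$ and the vertical order is $k$ (so $\ell=m-k-1$ horizontal derivatives on $f_\gamma$ gives weight $\abs\xi^{2(m-k-1)-2k-1}=\abs\xi^{2m-4k-3}$... one must track indices carefully, but the homogeneity works out because $f_\gamma$ itself carries $m-1$ derivatives, contributing the remaining powers of $\abs\xi$).

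I expect the main obstacle to be bookkeeping rather than conceptual: correctly handling the compatibility (Whitney) conditions so that an extension built from only the vertical-derivative components $f_{(0,\dots,0,j)}$ genuinely reproduces \emph{all} components $f_\gamma$ of $\arr f$ in the trace, and making sure the extension operator is well-defined on the \emph{completion} $\dot W\!A^2_{m-1,1/2}(\R^n)$ and not merely on the dense class $\mathfrak D$. I would address this by first establishing both inequalities for $\arr f=\Tr_{m-1}\varphi$, $\varphi\in C^\infty_0(\R^\dmn)$ — where all components are literally derivatives of a single smooth function, so there is nothing to reconcile — and then extending to the completion by continuity, using the first (trace) inequality to guarantee that the extension $F$ one constructs has its trace in the right space and depends continuously on $\arr f$. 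An alternative, if one prefers to avoid explicit kernels, is to invoke the already-cited identification of $\dot W\!A^2_{m-1,1/2}$ with a Besov trace space from \cite{BarHM15p,BarHM17pB} and quote the corresponding trace/extension theorem directly; but the self-contained Fourier-transform argument above is short enough that I would carry it out explicitly.
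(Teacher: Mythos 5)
Your overall plan---prove the trace estimate by integrating $\partial_t$ down to the boundary and applying Cauchy--Schwarz on the Fourier side, and construct the extension by a semigroup-type formula---is the standard route and is in the spirit of what the paper intends, although the paper itself dispenses with the lemma by citing \cite{Liz60} for the inhomogeneous version and \cite[Section~5]{Jaw77} for the homogeneous $m=1$ case, then remarks that the $m\geq2$ trace and the extension follow routinely. That said, two specific steps in your sketch would fail as written.

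In the trace direction, your proposed Cauchy--Schwarz pairing of $\abs{\widehat{\partial^\gamma u}(\xi,0)}^2\abs\xi$ against $\int_0^\infty \abs{\widehat{\partial_t\partial^\gamma u}(\xi,t)}^2\,t\,dt$ and $\int_0^\infty \abs{\widehat{\partial^\gamma u}(\xi,t)}^2\abs\xi^2\,dt$ is dimensionally inconsistent, and in any case $\int_{\R^n}\int_0^\infty \abs{\widehat{\partial_t\partial^\gamma u}(\xi,t)}^2\,t\,dt\,d\xi$ is \emph{not} controlled by $\doublebar{\nabla^m u}_{L^2(\R^\dmn_+)}^2$: the $t$-weighted square function is a genuinely different (and in general infinite) quantity for a function merely in $\dot W^2_m(\R^\dmn_+)$. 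Drop the $t$ weight. Writing $g(t)=\widehat{\partial^\gamma u}(\xi,t)$, one has $\abs{g(0)}^2\leq 2\int_0^\infty\abs g\,\abs{g'}\,dt$ and hence, by Cauchy--Schwarz with the constant weight $\abs\xi$,
\[\abs\xi\,\abs{g(0)}^2\leq \int_0^\infty\abs{g'(t)}^2\,dt+\abs\xi^2\int_0^\infty\abs{g(t)}^2\,dt;\]
integrating in $\xi$ and using Plancherel, both terms are bounded by $\doublebar{\nabla^m u}_{L^2(\R^\dmn_+)}^2$ since $\partial_t\partial^\gamma u$ and $\nabla_\pureH\partial^\gamma u$ are components of $\nabla^m u$ when $\abs\gamma=m-1$. (One must of course justify the decay at $t=\infty$ and the density reduction, as you note.)

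In the extension direction, the formula $F(x,t)=\sum_{j=0}^{m-1}\frac{t^j}{j!}P_t f_{(0,\dots,0,j)}(x)$ with $P_t$ a Poisson or heat semigroup does \emph{not} satisfy $\Tr_{m-1}^+F=\arr f$ when $m\geq2$. Since $\partial_t P_t\big|_{t=0}\neq0$ for either of those semigroups, the product rule gives $\partial_t F(x,0)=f_1+\bigl(\partial_t P_t\big|_{t=0}\bigr)f_0\neq f_1$, and the contamination propagates to all higher $\partial_t^k$. You need a convolution family whose low-order $t$-derivatives vanish at $t=0$; that is precisely why the paper's extension operator \eqref{C:eqn:Neumann:extension} uses $\mathcal{Q}_t^m=e^{-(-t^2\Delta_\pureH)^m}$, whose Fourier multiplier $e^{-(4\pi^2t^2\abs\xi^2)^m}$ is a function of $(t\abs\xi)^{2m}$ and hence has $\partial_t^k\mathcal{Q}_t^m\big|_{t=0}=0$ for $1\leq k\leq2m-1$. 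With that kernel the product-rule computation does give $\partial_t^kF(x,0)=f_k$ for $0\leq k\leq m-1$, and your homogeneity count $\int_0^\infty t^{2k}\abs\xi^{2\ell}e^{-2c(t\abs\xi)^{2m}}\,dt\sim\abs\xi^{2\ell-2k-1}$ is then correct and delivers the extension bound.
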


If $\dot W^2_m(\R^\dmn_+)$ and $\dot W\!A^2_{m-1,1/2}(\R^n)$ are replaced by their inhomogeneous counterparts, then this lemma is a special case of \cite{Liz60}. For the homogeneous spaces that we consider, the $m=1$ case of this lemma is a special case of \cite[Section~5]{Jaw77}. The trace result for $m\geq 2$ follows from the trace result for $m=1$;  extensions may easily be constructed using the Fourier transform.

%

\subsubsection{Neumann boundary data}
\label{C:sec:dfn:Neumann}

%

We define Neumann boundary values of a solution $u$ to $Lu=0$ as described in the introduction. That is,
define $\mathcal{E}$ as in formula~\eqref{C:eqn:Neumann:extension}.
We define the Neumann boundary values $\M_{\mat A} u=\M_{\mat A}^+ u$ of $u$ by
\begin{equation}\label{C:eqn:Neumann:E}
\langle \M_{\mat A}^+ u,\Tr_{m-1}^+\varphi\rangle_{\R^n}
=\lim_{\varepsilon \to 0^+}\lim_{T\to\infty} \int_\varepsilon^T\langle \mat A\nabla^m u(\,\cdot\,,t), \nabla^m \mathcal{E}\varphi(\,\cdot\,,t)\rangle_{\R^n}\,dt
.\end{equation}
We define $\M_{\mat A}^- u$ similarly, as an appropriate integral from $-\infty$ to zero. 
Notice that $\M_{\mat A} u$ is an operator on the subspace $\mathfrak{D}$ appearing in Definition~\ref{C:dfn:Whitney}; given certain bounds on~$u$, there exist Neumann trace theorems (see Section~\ref{C:sec:trace}) that allow us to extend $\M_{\mat A} u^\pm$ to an operator on $\dot W\!A^2_{m-1,0}(\R^n)$ or $\dot W\!A^2_{m-1,1}(\R^n)$. 

As mentioned in the introduction, if $v$ is as in the Neumann problem~\eqref{C:eqn:neumann:rough} then the inner product $\langle \mat A\nabla^m v(\,\cdot\,,t), \nabla^m \mathcal{E}\varphi(\,\cdot\,,t)\rangle_{\R^n}$ represents an absolutely convergent integral for each \emph{fixed} $t>0$, and the limit in formula~\eqref{C:eqn:Neumann:E} exists, but the integral \eqref{C:eqn:introduction:Neumann:LHS} with $\varphi=\mathcal{E}\varphi$ might not converge absolutely. See Theorem~\ref{C:thm:Neumann:1}. Thus, the order of integration in formula~\eqref{C:eqn:Neumann:E} is important.



However, for solutions that satisfy stronger bounds, we need not be quite so careful in defining Neumann boundary values.

In particular, suppose that  $u\in \dot W^2_m(\R^\dmn_+)$ and that $Lu=0$ in $\R^\dmn_+$. By the definition~\eqref{C:eqn:L} of~$Lu$, if $\varphi$ is smooth and supported in $\R^\dmn_+$, then $\langle \nabla^m\varphi,\mat A\nabla^m u \rangle_{\R^\dmn_+}=0$. By density of smooth functions and boundedness of the trace map, we have that $\langle \nabla^m\varphi,\mat A\nabla^m u \rangle_{\R^\dmn_+}=0$ for any $\varphi\in \dot W^2_m(\R^\dmn_+)$ with $\Tr_{m-1}^+\varphi=0$. Thus, if $\Psi\in \dot W^2_m(\R^\dmn_+)$, then $\langle \nabla^m\Psi,\mat A\nabla^m u\rangle_{\R^\dmn_+}$ depends only on $\Tr_{m-1}^+\Psi$.
Thus, for solutions $u$ to $Lu=0$ with $u\in\dot W^2_m(\R^\dmn_+)$, we may define the Neumann boundary values $\M_{\mat A}^+ u$ by the formula
\begin{equation}\label{C:eqn:Neumann:W2}
\langle \Tr_{m-1}^+\Psi,\M_{\mat A}^+ u\rangle_{\R^n}
=
\langle \nabla^m\Psi,\mat A\nabla^m u\rangle_{\R^\dmn_+} 
\quad\text{for any $\Psi\in \dot W^2_m(\R^\dmn)$}.
\end{equation}
We define $\M_{\mat A}^- u$ for a solution $u\in \dot W^2_m(\R^\dmn_-)$ similarly.
By 
\cite[Lemma~\ref*{B:lem:Neumann:W2}]{BarHM17pB}, if $u\in \dot W^2_m(\R^\dmn_+)$, then the two formulas \eqref{C:eqn:Neumann:E} and~\eqref{C:eqn:Neumann:W2} for the Neumann boundary values of a solution in $\dot W^2_m(\R^\dmn_+)$ coincide.

Furthermore, by  \cite[Theorem~\ref*{B:thm:Neumann:2}]{BarHM17pB}, if $w$ is a solution in  $\R^\dmn_+$ that satisfies estimates as in problem~\eqref{C:eqn:neumann:regular}, then the integral~\eqref{C:eqn:introduction:Neumann:LHS} with $\varphi=\mathcal{E}\varphi$ does converge absolutely for compactly supported~$\varphi$ (and so the order of integration in formula~\eqref{C:eqn:Neumann:E} is not important), and 
\begin{equation*}\langle \Tr_{m-1}^+\varphi,\M_{\mat A}^+ w\rangle_{\R^n}
=\langle \nabla^m \mathcal{E}\varphi,\mat A\nabla^m w\rangle_{\R^\dmn_+}
=\langle \nabla^m \varphi,\mat A\nabla^m w\rangle_{\R^\dmn_+}
\end{equation*}
for any $\varphi\in C^\infty_0(\R^\dmn)$.
Thus, formula~\eqref{C:eqn:Neumann:W2} is valid for $\Psi$ smooth and compactly supported, albeit not for all $\Psi\in \dot W^2_m(\R^\dmn_+)$.

%


See \cite{BarM16B,BarHM15p} for a much more extensive discussion of higher order Neumann boundary values.

\subsection{Potential operators}\label{C:sec:dfn:potentials}
Two very important tools in the theory of second order elliptic boundary value problems are the double and single layer potentials. These potential operators are also very useful in the higher order theory. 
In this section we define our formulations of higher-order layer potentials; this is the formulation used in \cite{BarHM15p,Bar17p,BarHM17pA,BarHM17pB} and is similar to that used in \cite{Agm57,CohG83,CohG85,Ver05,MitM13B,MitM13A}.

For any $\arr H\in L^2(\R^\dmn)$, by the Lax-Milgram theorem there is a unique function $u\in\dot W^2_m(\R^\dmn)$ that satisfies
\begin{equation}\label{C:eqn:newton}
\langle \nabla^m\varphi, \mat A\nabla^m u\rangle_{\R^\dmn}=\langle \nabla^m\varphi, \arr H\rangle_{\R^\dmn}\end{equation}
for all $\varphi\in \dot W^2_m(\R^\dmn)$.
Let $\Pi^L\arr H=u$.  We refer to $\Pi^L$ as the Newton potential operator for~$L$. See \cite{Bar16} for a further discussion of the operator~$\Pi^L$.

We may define the double and single layer potentials in terms of the Newton potential.
Suppose that $\arr f\in \dot W\!A^2_{m-1,1/2}(\R^n)$.
By Lemma~\ref{C:lem:Besov}, there is some  $F\in \dot W^2_m(\R^\dmn_+)$ that satisfies $\arr f=\Tr_{m-1}^+ F$.
We define the double layer potential of $\arr f$ as
\begin{align}
\label{C:dfn:D:newton}
\D^{\mat A}\arr f &= -\1_+ F + \Pi^L(\1_+ \mat A\nabla^m F)
\end{align}
where $\1_+$ is the characteristic function of the upper half-space $\R^\dmn_+$.
$\D^{\mat A}\arr f$ is well-defined, that is, does not depend on the choice of~$F$; see \cite{BarHM15p,Bar17p}. We remark that by \cite[formula~(2.27)]{BarHM15p} or \cite[formula~(4.9)]{Bar17p}, if $\1_-$ is the characteristic function of the lower half space, then
\begin{align}
\label{C:eqn:D:newton:lower}
\D^{\mat A}\arr f &= \1_- F - \Pi^L(\1_- \mat A\nabla^m F) \quad\text{if }\Tr_{m-1}^- F=\arr f.
\end{align}

Similarly, let $\arr g$ be a bounded operator on $\dot W\!A^2_{m-1,1/2}(\R^n)$. 
There is some $\arr G\in L^2(\R^\dmn)$ such that $\langle \arr G, \nabla^m\varphi\rangle_{\R^\dmn} = \langle \arr g, \Tr_{m-1}\varphi\rangle_{\partial{\R^\dmn_+}}$ for all $\varphi\in \dot W^2_m(\R^\dmn)$; see \cite{BarHM15p}. (We may require $\arr G$ to be supported in $\R^\dmn_+$ or $\R^\dmn_-$.)
We define
\begin{align}
\label{C:dfn:S:newton}
\s^{L}\arr g&=\Pi^L\arr G
.\end{align}
Again, $\s^{L}\arr g$ does not depend on the choice of extension~$\arr G$ of~$\arr g$; see \cite{BarHM15p}. 

It was shown in \cite{BarHM17pA} that the operators $\D^{\mat A}$ and $\s^L$, originally defined on $\dot W\!A^{2}_{m-1,1/2}(\R^n)$ and its dual space, extend by density to operators defined on $\dot W\!A^2_{m-1,0}(\R^n)$ and $\dot W\!A^2_{m-1,1}(\R^n)$ or their respective dual spaces; see Section~\ref{C:sec:potentials:bounds}.

A benefit of these formulations of layer potentials is the easy proof of the Green's formula. By taking $F=u$ and $\arr G=\1_+\mat A\nabla^m u$, and applying the definition~\eqref{C:eqn:Neumann:W2} of Neumann boundary values, we immediately have that
\begin{equation}
\label{C:eqn:green}
\1_+ \nabla^m u=-\nabla^m \D^{\mat A}(\Tr_{m-1}^+  u) + \nabla^m \s^{L}(\M_{\mat A}^+  u) 
\end{equation}
for all $u\in\dot W^2_m(\R^\dmn_+)$ that satisfy $Lu=0$ in $\R^\dmn_+$.

We will also need a Green's formula in the lower half space.
If $Lu=0$ in $\R^\dmn_-$ for some $u\in \dot W^2_m(\R^\dmn_-)$, then by formula~\eqref{C:eqn:D:newton:lower},
\begin{equation}
\label{C:eqn:green:lower}
\1_- \nabla^m u=\nabla^m \D^{\mat A}(\Tr_{m-1}^-  u) + \nabla^m \s^{L}(\M_{\mat A}^-  u) 
.\end{equation}

\section{Known results}
\label{C:sec:known}

To prove our main results, we will need to use a number of known results from the theory of higher order differential equations. We gather these results in this section.

\subsection{Regularity of solutions to elliptic equations}

The first such result we list is the higher order analogue to the Caccioppoli inequality; it was proven in full generality in \cite{Bar16} and some important preliminary versions were established in \cite{Cam80,AusQ00}.
\begin{lem}[The Caccioppoli inequality]\label{C:lem:Caccioppoli}
Suppose that $L$ is a divergence-form elliptic operator associated to coefficients $\mat A$ satisfying the ellipticity conditions \eqref{C:eqn:elliptic} and~\eqref{C:eqn:elliptic:bounded}. Let $ u\in \dot W^2_m(B(X,2r))$ with $L u=0$ in $B(X,2r)$.

Then we have the bound
\begin{equation*}
\int_{B(X,r)} \abs{\nabla^j  u(x,s)}^2\,dx\,ds
\leq \frac{C}{r^2}\int_{B(X,2r)} \abs{\nabla^{j-1}  u(x,s)}^2\,dx\,ds
\end{equation*}
for any $j$ with $1\leq j\leq m$.
\end{lem}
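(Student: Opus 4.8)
The plan is to prove first the top‑order estimate, the case $j=m$, which is the heart of the matter, and then to deduce the intermediate cases $1\le j\le m-1$ by interpolation together with an iteration over concentric balls. For the case $j=m$, fix $\eta\in C_0^\infty(B(X,2r))$ with $\eta\equiv 1$ on $B(X,r)$, $0\le\eta\le 1$, and $\abs{\partial^\gamma\eta}\le C_\gamma r^{-\abs\gamma}$. Since $u\in\dot W^2_m(B(X,2r))$ and $\eta$ is compactly supported, $\varphi=\eta^{2m}u$ lies in $\dot W^2_m(\R^\dmn)$ and is an admissible test function for $Lu=0$, so by \eqref{C:eqn:L} we have $\langle\nabla^m(\eta^{2m}u),\mat A\nabla^m u\rangle_{\R^\dmn}=0$. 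Expanding $\nabla^m(\eta^{2m}u)=\eta^{2m}\nabla^m u+\arr R$ by the Leibniz rule, where every entry of $\arr R$ carries at least one derivative on $\eta^{2m}$ and is thus a sum of terms $(\partial^\gamma\eta^{2m})(\partial^{\alpha-\gamma}u)$ with $1\le\abs\gamma\le m$ and $\abs{\partial^\gamma\eta^{2m}}\le Cr^{-\abs\gamma}\eta^{2m-\abs\gamma}$, we obtain the identity $\langle\eta^{2m}\nabla^m u,\mat A\nabla^m u\rangle_{\R^\dmn}=-\langle\arr R,\mat A\nabla^m u\rangle_{\R^\dmn}$.

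The key obstacle is that the G\r{a}rding inequality \eqref{C:eqn:elliptic} is only an integrated coercivity estimate, not a pointwise one, so it does not directly bound $\int\eta^{2m}\abs{\nabla^m u}^2$ from below. To circumvent this, I would apply \eqref{C:eqn:elliptic} to $w=\eta^m u\in\dot W^2_m(\R^\dmn)$. Writing $\nabla^m w=\eta^m\nabla^m u+\arr S$, with $\arr S$ again a sum of terms carrying at least one derivative on $\eta^m$, one sees that $\re\langle\nabla^m w,\mat A\nabla^m w\rangle_{\R^\dmn}=\re\langle\eta^{2m}\nabla^m u,\mat A\nabla^m u\rangle_{\R^\dmn}+(\text{terms involving }\arr S)$, while $\re\langle\nabla^m w,\mat A\nabla^m w\rangle_{\R^\dmn}\ge\lambda\doublebar{\nabla^m w}_{L^2}^2\ge\tfrac\lambda2\doublebar{\eta^m\nabla^m u}_{L^2}^2-\lambda\doublebar{\arr S}_{L^2}^2$. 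Combining this with the identity from the first paragraph and estimating $\abs{\arr R}$, $\abs{\arr S}$ and all cross terms by Cauchy--Schwarz and Young's inequality — each such term pairs $\eta^m\abs{\nabla^m u}$ (or a power of $\eta$ at least that large) against $r^{-k}\abs{\nabla^{m-k}u}$ for some $1\le k\le m$, and the $\eta^m\abs{\nabla^m u}$ factor is absorbed into the left side with a small constant — yields
\begin{equation*}
\int_{B(X,r)}\abs{\nabla^m u}^2\le C\sum_{k=0}^{m-1}r^{-2(m-k)}\int_{B(X,2r)}\abs{\nabla^k u}^2 .
\end{equation*}

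To collapse this sum to the single term involving $\nabla^{m-1}u$, I would exploit that every polynomial of degree $m-1$ solves $Lu=0$: replace $u$ by $u-Q$, where $Q$ is the (unique) polynomial of degree $m-1$ with $\int_{B(X,2r)}\nabla^k(u-Q)=0$ for $0\le k\le m-1$. Since $\nabla^m Q=0$, the left side is unchanged, while the Poincar\'e inequality applied iteratively gives $\int_{B(X,2r)}\abs{\nabla^k(u-Q)}^2\le Cr^{2(m-1-k)}\int_{B(X,2r)}\abs{\nabla^{m-1}(u-Q)}^2\le Cr^{2(m-1-k)}\int_{B(X,2r)}\abs{\nabla^{m-1}u}^2$ for each $0\le k\le m-1$. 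Substituting makes every term of the displayed sum equal to $Cr^{-2}\int_{B(X,2r)}\abs{\nabla^{m-1}u}^2$, which proves the case $j=m$; note the argument applies verbatim on every ball whose double lies in the domain of $u$.

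Finally, for $1\le j\le m-1$ the equation no longer controls $\nabla^j u$ directly, because derivatives of $u$ need not be solutions when $\mat A$ is merely bounded and measurable; so I would combine the top-order estimate just obtained, now available on all sub-balls, with an interpolation inequality bounding $\doublebar{\nabla^j(u-\widetilde Q)}_{L^2(B_\rho)}$ against $\doublebar{\nabla^{j-1}(u-\widetilde Q)}_{L^2(B_R)}$ and $\doublebar{\nabla^m(u-\widetilde Q)}_{L^2(B_R)}$ for a suitable polynomial $\widetilde Q$, and then run the resulting self-improving inequality over a chain of concentric balls between $B(X,r)$ and $B(X,2r)$, with enough steps that the coefficient of the self-referential term is less than $1$ and a geometric series closes. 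The principal difficulty of the whole argument is concentrated in the top-order coercivity step of the second paragraph — the passage through $w=\eta^m u$ is exactly what makes the plain G\r{a}rding inequality \eqref{C:eqn:elliptic} suffice in the absence of any pointwise ellipticity condition — while the remaining manipulations are routine, if tedious, Leibniz-rule and interpolation bookkeeping.
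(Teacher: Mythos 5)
The paper does not prove this lemma in-house; it cites \cite{Bar16} (with precursors \cite{Cam80,AusQ00}), so there is no in-paper argument to compare against. Your treatment of the top-order case $j=m$ --- testing against $\eta^{2m}u$, supplying coercivity by applying the G\r{a}rding bound \eqref{C:eqn:elliptic} to $w=\eta^m u$ rather than trying to use it pointwise, and collapsing the multi-term right-hand side by subtracting the degree-$(m-1)$ polynomial whose means on $B(X,2r)$ match those of $u$ --- is precisely the argument in those references, and you have correctly isolated the passage through $w=\eta^m u$ as the essential step. (A small technical remark: $u$ need only lie in $\dot W^2_m(B(X,2r))$, so one should subtract the polynomial \emph{before} forming the test function, or equivalently note that $\eta^{2m}(u-Q)$ is the admissible member of $\dot W^2_m(\R^\dmn)$; this is the order in which you list the steps anyway, reversed.)

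One caution about the case $1\le j\le m-1$. The phrase ``with enough steps that the coefficient of the self-referential term is less than $1$'' has the dependence backwards: refining the chain of concentric balls makes $t-s$ smaller, and the factor $(t-s)^{-2}$ coming from the top-order Caccioppoli then makes the coefficient multiplying $\int_{B_t}\abs{\nabla^j(u-\widetilde Q)}^2$ \emph{grow}, not shrink. What actually closes the iteration is using a Gagliardo--Nirenberg interpolation inequality carrying two free parameters --- a small constant $\delta$ \emph{and} a free length scale $\rho$, of the form $\doublebar{\nabla^{m-k}v}_{L^2(B_t)}\le\delta\,\rho^{k}\doublebar{\nabla^m v}_{L^2(B_t)}+C_\delta\,\rho^{-(m-k-j+1)}\doublebar{\nabla^{j-1}v}_{L^2(B_t)}$ --- and then choosing $\rho$ comparable to $t-s$ so the Caccioppoli factors cancel, leaving a fixed absorption constant $C\delta<1$ uniform over $r\le s<t\le 2r$. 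A Giaquinta-type hole-filling lemma then gives $\int_{B_r}\abs{\nabla^m v}^2\le Cr^{-2(m-j+1)}\int_{B_{2r}}\abs{\nabla^{j-1}v}^2$, and one final interpolation on $B_r$ recovers the stated bound. This bookkeeping is standard, but it is a genuine piece of argument, not the ``routine, if tedious'' Leibniz-rule work your last line suggests; it is the second hardest point of the proof after the G\r{a}rding coercivity step.
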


If $\mat A$ is $t$-independent, then solutions to $Lu=0$ have additional regularity.
The following lemma was proven in the case $m=1$ in \cite[Proposition 2.1]{AlfAAHK11} and generalized to the case $m\geq 2$ in \cite[Lemma~3.2]{BarHM15p}. 
\begin{lem}\label{C:lem:slices}
Let $t\in\R$ be a constant, and let $Q\subset\R^n$ be a cube with side-length~$\ell(Q)$. Let $2Q$ be the concentric cube of side-length~$2\ell(Q)$.

If $Lu=0$ in $2Q\times(t-\ell(Q),t+\ell(Q))$, and $L$ is an operator of order~$2m$ associated to $t$-independent coefficients~$A$, then
\begin{equation*}\int_Q \abs{\nabla^j \partial_t^k u(x,t)}^2\,dx \leq \frac{C}{\ell(Q)}
\int_{2Q}\int_{t-\ell(Q)}^{t+\ell(Q)} \abs{\nabla^j \partial_s^k u(x,s)}^2\,ds\,dx\end{equation*}
for any $0\leq j\leq m$ and any integer $k\geq 0$.
\end{lem}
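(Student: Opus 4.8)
The statement to prove is Lemma~\ref{C:lem:slices}: an interior estimate comparing the $L^2$ norm of $\nabla^j\partial_t^k u$ on a slice $Q\times\{t\}$ with its $L^2$ norm on a solid box $2Q\times(t-\ell(Q),t+\ell(Q))$. The plan is to reduce the slice estimate to a solid estimate by using (i) $t$-independence of the coefficients to transfer derivatives from the vertical to the horizontal direction at the level of differentiated solutions, and (ii) a Sobolev-type trace/interpolation inequality to pass from the solid box to the slice, with the Caccioppoli inequality (Lemma~\ref{C:lem:Caccioppoli}) supplying the needed control of higher gradients.

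\textbf{Step 1: reduce to $k=0$.} Since $\mat A$ is $t$-independent, if $Lu=0$ then $L(\partial_t^k u)=0$ as well, because differentiating the weak formulation \eqref{C:eqn:L} in $t$ only hits $u$ (the coefficients do not depend on $t$) and $\partial_t$ commutes with $\partial^\alpha$. Moreover $\partial_t^k u$ is again a weak solution in a slightly smaller box, e.g.\ $2Q\times(t-\tfrac12\ell(Q),t+\tfrac12\ell(Q))$, by interior regularity of solutions (so that $\nabla^m\partial_t^k u\in L^2_{loc}$); this uses the higher-order De Giorgi--Nash--Moser type regularity quoted from \cite{Bar16} in Section~\ref{C:sec:known}. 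Thus it suffices to prove the lemma with $k=0$, at the cost of shrinking the box by a fixed factor and relabeling; a covering/scaling argument absorbs the shrinkage.

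\textbf{Step 2: the slice estimate for $k=0$.} After rescaling so that $\ell(Q)=1$ (the inequality is scale invariant once one checks the powers of $\ell(Q)$ match — the factor $1/\ell(Q)$ on the right is exactly the Jacobian of the $n$-dimensional-vs-$(n{+}1)$-dimensional measure discrepancy), we must bound $\int_Q|\nabla^j u(x,t)|^2\,dx$ by $C\int_{2Q\times(t-1,t+1)}|\nabla^j u|^2$. Write $w=\partial^\gamma u$ for a fixed multiindex with $|\gamma|=j$, $j\le m$. The one-dimensional trace inequality $|g(t)|^2\le C\int_I(|g(s)|^2+|g'(s)|^2)\,ds$ applied in the $s$-variable on an interval $I$ of length $\asymp 1$, then integrated in $x$ over $Q$, gives
\begin{equation*}
\int_Q|\nabla^j u(x,t)|^2\,dx\le C\int_{Q\times I}\bigl(|\nabla^j u|^2+|\nabla^j\partial_s u|^2\bigr)\,ds\,dx.
\end{equation*}
The second term on the right involves $j+1$ derivatives. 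If $j<m$ we are done immediately since $|\nabla^j\partial_s u|\le|\nabla^{j+1}u|$ and $\nabla^{j+1}u\in L^2_{loc}$ by interior regularity; but to land on $\nabla^j u$ rather than $\nabla^{j+1}u$ on the right-hand side (and to handle $j=m$, where $\nabla^{m+1}u$ need not be controlled a priori without Caccioppoli), apply Lemma~\ref{C:lem:Caccioppoli} to $w=\partial^\gamma u$: since $Lu=0$ and $\mat A$ is $t$-independent, $w$ solves $Lw=0$ when $j\le m$ only for $j$ such that all derivatives are legitimate — more carefully, one applies Caccioppoli directly to $u$ to bound $\int_{Q\times I}|\nabla^{j+1}u|^2$ by $C\int_{2Q\times(t-1,t+1)}|\nabla^j u|^2$, iterating if necessary from $j+1$ down, or one simply invokes Caccioppoli once with the pair $(j+1,j)$ of orders (valid for $j+1\le m$) and, for $j=m$, notes that $\nabla^m\partial_s u=\nabla^m\partial_s u$ is handled by Caccioppoli applied to $\partial_s u$, which solves $L(\partial_s u)=0$ by $t$-independence. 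Combining the trace inequality with the appropriate application(s) of Caccioppoli yields the claim.

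\textbf{Main obstacle.} The only genuinely delicate point is the bookkeeping at the top order $j=m$ (and, after Step~1, the mixed top order $j=m$, $k\ge1$): there $\nabla^{m+1}u$ is not controlled by the ellipticity structure alone, so one cannot naively bound the trace-inequality remainder term by $\nabla^{m+1}u\in L^2$. The resolution is to commute $\partial_t$ through $L$ using $t$-independence so that $\partial_t^k u$ (or $\partial_s u$) is itself a solution, and then apply Caccioppoli to \emph{that} solution, trading the extra derivative for an extra power of $1/\ell(Q)$ on a fixed-size enlargement of the box. One must check that this commutation is justified weakly (difference quotients in $t$, using that translates of a solution are solutions because the coefficients are $t$-independent, then passing to the limit with interior $L^2$ bounds), which is exactly the argument already used in \cite{AlfAAHK11,BarHM15p} for the cited special cases; here it extends verbatim since only $t$-independence and the Caccioppoli inequality are used.
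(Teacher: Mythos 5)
The paper does not prove Lemma~\ref{C:lem:slices}; it cites the $m=1$ case to \cite[Proposition 2.1]{AlfAAHK11} and the $m\ge 2$ case to \cite[Lemma~3.2]{BarHM15p}, so there is no in-paper proof to compare against. Your proposal --- the one-dimensional Sobolev/trace inequality in the $t$-variable to pass from a slice to a solid integral at the cost of one extra $\partial_s$, the Caccioppoli inequality to remove that extra derivative, and at top order the observation (justified by $t$-independence via difference quotients) that $\partial_s u$ is itself a solution to which Caccioppoli can be applied --- is precisely the standard argument used in those references, and it is correct.

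Two presentational points. First, the middle of Step 2 is muddied by the aside about whether $w=\partial^\gamma u$ solves $Lw=0$: for a general multiindex $\gamma$ it does not, since $\mat A$ depends on the horizontal variables; you do not actually need this and you self-correct, but a reader may stumble. A cleaner bookkeeping that treats all $0\le j\le m$ uniformly is: after the trace inequality one must bound $\ell \iint |\nabla^j\partial_s u|^2$. For $1\le j\le m$, apply Caccioppoli to the solution $\partial_s u$ at order $j$ to get $\iint|\nabla^j\partial_s u|^2 \le C\ell^{-2}\iint|\nabla^{j-1}\partial_s u|^2 \le C\ell^{-2}\iint|\nabla^j u|^2$; for $j=0$, use $|\partial_s u|\le|\nabla u|$ and Caccioppoli on $u$ at order one. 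This dispenses with the $j<m$ versus $j=m$ case split. Second, the ``covering/scaling argument'' alluded to in Step 1 should be spelled out: one covers $Q$ by finitely many cubes $Q'$ of comparable side length for which $2Q'\times(t-\ell(Q'),t+\ell(Q'))$ stays inside the region where the difference-quotient argument has certified that $\partial_t^k u$ is a weak solution with $\nabla^m\partial_t^k u\in L^2_{loc}$. Since the statement permits the constant to depend on $k$, the fact that this covering becomes finer as $k$ grows is harmless.
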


\subsection{Boundedness results for layer potentials}
\label{C:sec:potentials:bounds}

We will need the following bounds on layer potentials.

\begin{thm}\label{C:thm:square}\textup{(\cite[Theorem~1.1]{BarHM15p})}
Suppose that $L$ is an elliptic operator of the form \eqref{C:eqn:divergence} of order~$2m$, associated with coefficients $\mat A$ that are $t$-independent in the sense of formula~\eqref{C:eqn:t-independent} and satisfy the ellipticity conditions \eqref{C:eqn:elliptic} and~\eqref{C:eqn:elliptic:bounded}.

Then the operators $\D^{\mat A}$ and $\s^L$, originally defined on $\dot W\!A^{2}_{m-1,1/2}(\R^n)$ and its dual space, extend by density to operators that satisfy
\begin{align}
\label{C:eqn:S:square}
\int_{\R^n}\int_{-\infty}^\infty \abs{\nabla^m \partial_t\s^{L} \arr g(x,t)}^2\,\abs{t}\,dt\,dx
	& \leq C \doublebar{\arr g}_{L^2(\R^n)}^2
,\\
\label{C:eqn:D:square}
\int_{\R^n}\int_{-\infty}^\infty \abs{\nabla^m \partial_t  \D^{\mat A} \arr f(x,t)}^2\,\abs{t}\,dt\,dx
	& \leq C \doublebar{\arr f}_{\dot W^2_1(\R^\dmnMinusOne)}^2
	= C \doublebar{\nabla_\pureH\arr f}_{L^2(\R^n)}^2
\end{align}
for all $\arr g\in {L^2(\R^n)}$ and all $\arr f\in \dot W\!A^2_{m-1,1}(\R^n)$.
\end{thm}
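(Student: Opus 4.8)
The plan is to reduce both bounds to a single quadratic (square-function) estimate for the $t$-independent first-order reformulation of the equation $Lv=0$, which is the higher-order analogue of the second-order $t$-independent square-function estimate of \cite{AlfAAHK11} (itself in the spirit of the solution of the Kato problem \cite{AusHLMT02}); this is how Theorem~\ref{C:thm:square} is proven in \cite{BarHM15p}.

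First I would use the Newton-potential formulas \eqref{C:dfn:D:newton} and \eqref{C:dfn:S:newton}. Restricted to $\R^\dmn_\pm$, both $\D^{\mat A}\arr f$ and $\s^L\arr g$ are solutions of $Lv=0$; the term $\1_+F$ in \eqref{C:dfn:D:newton} is itself a solution in each open half-space, so by the Caccioppoli inequality (Lemma~\ref{C:lem:Caccioppoli}) it contributes to the square function only terms already dominated by the right-hand side. Tracking the boundary data through these formulas (and through Lemma~\ref{C:lem:Besov} in the choice of the extension $F$), one finds that $\s^L\arr g$ is a solution whose ``conormal gradient'' at $\R^n\times\{0\}$ is a bounded transform of the $L^2(\R^n)$ datum $\arr g$, while the conormal gradient of $\D^{\mat A}\arr f$ is controlled by $\doublebar{\nabla_\pureH\arr f}_{L^2(\R^n)}=\doublebar{\arr f}_{\dot W^2_1(\R^\dmnMinusOne)}$. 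So it suffices to prove that, if $v$ solves $Lv=0$ in a half-space and its conormal gradient at the boundary lies in $L^2(\R^n)$, then $\int_{\R^n}\int\abs{\nabla^m\partial_t v(x,t)}^2\,\abs t\,dt\,dx$ is at most $C$ times the square of that $L^2$ norm; the stated extension to $\dot W\!A^2_{m-1,0}(\R^n)$ and $\dot W\!A^2_{m-1,1}(\R^n)$ is then by density.

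Next I would exploit $t$-independence. Because $\mat A(x,t)=\mat A(x)$, if $v$ solves $Lv=0$ then so does $\partial_t v$, and iterating the Caccioppoli inequality together with the slicewise estimate Lemma~\ref{C:lem:slices} yields interior control of $\nabla^m\partial_t v(\,\cdot\,,t)$ in $L^2(\R^n)$ by $L^2$-averages of $\nabla^m v$ over nearby slabs. One then recasts the scalar $2m$-th order equation as a $t$-independent first-order system $\partial_t\arr V=-\mathcal D\mathcal B\,\arr V$, where $\arr V$ records the conormal gradient of $v$, $\mathcal D$ is a self-adjoint first-order operator in the $x$-variables, and $\mathcal B$ is bounded and coercive on the relevant closed subspace (this is where \eqref{C:eqn:elliptic} is used). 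The desired bound becomes the quadratic estimate $\int_0^\infty\doublebar{t\,\mathcal D\mathcal B\,e^{-t\mathcal D\mathcal B}\arr V_0}_{L^2(\R^n)}^2\,\frac{dt}{t}\le C\doublebar{\arr V_0}_{L^2(\R^n)}^2$, that is, the $L^2$-boundedness of the holomorphic functional calculus of $\mathcal D\mathcal B$.

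I expect this last step to be the main obstacle, and it is carried out exactly as in the second-order theory of \cite{AlfAAHK11}, adapted to the higher-order setting in \cite{BarHM15p}: Gaffney--Caccioppoli off-diagonal decay estimates for the resolvents $(1+it\mathcal D\mathcal B)^{-1}$, a Schur/Cotlar--Stein argument to dispatch well-separated scales, and a $T(b)$-type argument controlling the diagonal (comparable-scale) part by a Carleson measure. It is at this point that the rough $x$-dependence of $\mat A$ genuinely enters and the coercivity \eqref{C:eqn:elliptic} is essential.
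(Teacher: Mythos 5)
This theorem is not proven in the paper at hand: it is imported verbatim from \cite[Theorem~1.1]{BarHM15p}, so the only ``proof'' here is the citation, and there is nothing internal to compare against. Insofar as you are reconstructing the argument of \cite{BarHM15p}, the high-level toolkit you list (Caccioppoli and off-diagonal/Gaffney decay, Cotlar--Stein at separated scales, a local $T(b)$/Carleson-measure argument at comparable scales) is the right one, but the structural claim is off. Neither \cite{AlfAAHK11} nor \cite{BarHM15p} proceeds by recasting $Lv=0$ as a first-order $t$-independent system $\partial_t\arr V=-\mathcal D\mathcal B\,\arr V$ and proving boundedness of the holomorphic functional calculus of $\mathcal D\mathcal B$; that is the Auscher--Axelsson(--McIntosh) and Auscher--Stahlhut framework (cf.\ \cite{AusAH08,AusAM10,AusS14p}, and \cite{Ros13} for the dictionary between the two formalisms). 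Instead, \cite{AlfAAHK11} and \cite{BarHM15p} work \emph{directly} with $t\,\nabla^m\partial_t\s^{L}$ and $t\,\nabla^m\partial_t\D^{\mat A}$ as (modified) singular integral operators, establish pointwise kernel and off-diagonal bounds via the Caccioppoli inequality and Lemma~\ref{C:lem:slices}, reduce the square-function bound to a Carleson measure estimate, and close that estimate by a $T(b)$-type argument whose key analytic input is the Kato square-root estimate for higher-order operators from \cite{AusHMT01}, taken as a black box. For a $2m$th-order operator the first-order reformulation you describe would require a substantially more elaborate vector of ``conormal'' unknowns and a coercivity analysis that is not carried out in \cite{BarHM15p}, so this piece of your sketch is not a faithful reconstruction, even though the rest of the ingredients are broadly correct.
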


\begin{thm}%
\label{C:thm:square:rough}%
\textup{(\cite[Theorems \ref*{A:thm:S:square:variant} and \ref*{A:thm:D:square:variant}]{BarHM17pA})}
Let $L$ be as in Theorem~\ref{C:thm:square}.
Then $\D^{\mat A}$ and $\s^L$ extend to operators that satisfy
\begin{align}
\label{C:eqn:S:square:variant}
\int_{\R^n}\int_{-\infty}^\infty \abs{\nabla^m \s^{L} \arr g(x,t)}^2\,\abs{t}\,dt\,dx
	& \leq C \doublebar{\arr g}_{\dot W_{-1}^2(\R^n)}^2
,\\
\label{C:eqn:D:square:rough}
\int_{\R^n}\int_{-\infty}^\infty \abs{\nabla^m \D^{\mat A} \arr f(x,t)}^2\,\abs{t}\,dt\,dx
	& \leq C \doublebar{\arr f}_{L^2(\R^n)}^2
\end{align}
for all $\arr g\in {\dot W_{-1}^2(\R^n)}$ and all $\arr f\in \dot W\!A^2_{m-1,0}(\R^n)$.
\end{thm}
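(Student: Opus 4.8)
The plan is to deduce both square function bounds from the estimate that underlies Theorem~\ref{C:thm:square}, by trading the vertical derivative appearing there for a power of $t$, at the cost of one horizontal derivative on the boundary datum. First I would recall, from \cite{BarHM15p,Bar17p}, the first order reformulation of the equation that is available because $\mat A$ is $t$-independent: a solution $u$ to $Lu=0$ in $\R^\dmn_+$ with $\nabla^m u$ locally square integrable up to the boundary has a conormal gradient $F(t):=\nabla^m u(\,\cdot\,,t)$ which satisfies a first order system $\partial_t F=-\mathcal D_B F$, where $\mathcal D_B$ is a bisectorial operator acting on a closed subspace of the space of $L^2(\R^n)$ arrays of the shape of $\nabla^m u$, and whose restriction $\Lambda$ to the spectral subspace relevant to solutions in $\R^\dmn_+$ is nonnegative and, up to bounded and boundedly invertible factors, comparable to $\lvert\nabla_\pureH\rvert$; thus $F(t)=e^{-t\Lambda}F(0)$. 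In this language Theorem~\ref{C:thm:square} is precisely the quadratic estimate
\begin{equation*}
\int_0^\infty\doublebar{t\,\Lambda\,e^{-t\Lambda}h}_{L^2(\R^n)}^2\,\frac{dt}{t}\leq C\doublebar{h}_{L^2(\R^n)}^2,
\end{equation*}
applied with $h=F(0)$ and with $\partial_t F=-\Lambda F$. Moreover, by \cite{Bar17p}, the layer potentials themselves fit this scheme: in $\R^\dmn_+$ one has $\nabla^m\s^L\arr g(\,\cdot\,,t)=e^{-t\Lambda}\mathfrak g$ and $\nabla^m\D^{\mat A}\arr f(\,\cdot\,,t)=e^{-t\Lambda}\mathfrak f$, where $\mathfrak g$, $\mathfrak f$ are the conormal gradient data at the boundary, produced from the boundary arrays $\arr g$, $\arr f$ by extension into the half-space and the trace correspondence of Lemma~\ref{C:lem:Besov}; the same holds in $\R^\dmn_-$ with the analogous operator.

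The key step would then be operator theoretic. Since $\Lambda\simeq\lvert\nabla_\pureH\rvert$, the (densely defined, homogeneous) inverse $\Lambda^{-1}$ maps $\dot W^2_{-1}(\R^n)$ into $L^2(\R^n)$ on the pertinent subspace; inserting $\Lambda^{-1}\Lambda$ inside the semigroup and applying the quadratic estimate with $h=\Lambda^{-1}\mathfrak g$ yields
\begin{equation*}
\int_{\R^n}\int_{-\infty}^\infty\abs{\nabla^m\s^L\arr g(x,t)}^2\,\abs{t}\,dt\,dx
=\int_0^\infty\doublebar{t\,\Lambda\,e^{-t\Lambda}\bigl(\Lambda^{-1}\mathfrak g\bigr)}_{L^2(\R^n)}^2\,\frac{dt}{t}+(\text{lower half-space})
\leq C\doublebar{\Lambda^{-1}\mathfrak g}_{L^2(\R^n)}^2\leq C\doublebar{\mathfrak g}_{\dot W^2_{-1}(\R^n)}^2.
\end{equation*}
This gives \eqref{C:eqn:S:square:variant} once we know that $\doublebar{\mathfrak g}_{\dot W^2_{-1}(\R^n)}\leq C\doublebar{\arr g}_{\dot W^2_{-1}(\R^n)}$, which should follow from the construction of $\mathfrak g$ from $\arr g$ (the conormal component of $\mathfrak g$ being, up to the jump relations for $\s^L$, the Neumann datum $\arr g$ itself, and the remaining components of $\mathfrak g$ being controlled in $\dot W^2_{-1}$ by $\arr g$, because for $t$-independent coefficients the full boundary gradient of a solution is dominated by its tangential derivatives together with its Neumann datum --- a consequence of the Caccioppoli inequality, Lemma~\ref{C:lem:Caccioppoli}, and Lemma~\ref{C:lem:slices}). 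The bound \eqref{C:eqn:D:square:rough} is obtained identically, now with $\mathfrak f$ built from the Dirichlet array $\arr f\in\dot W\!A^2_{m-1,0}(\R^n)$; the point is that $\doublebar{\mathfrak f}_{\dot W^2_{-1}(\R^n)}\leq C\doublebar{\arr f}_{L^2(\R^n)}$, the gain of one derivative being exactly the passage from $\arr f=\nabla^{m-1}(\D^{\mat A}\arr f)\big\vert_{\partial\R^\dmn_+}\in L^2$ to the full boundary gradient $\nabla^m(\D^{\mat A}\arr f)\big\vert_{\partial\R^\dmn_+}\in\dot W^2_{-1}$.

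I expect the main obstacle to be that $\Lambda$ (equivalently $\mathcal D_B$) is not invertible: its spectrum reaches $0$, so ``$\Lambda^{-1}$'' is only densely defined and the entire argument must be run in the homogeneous framework. One has to verify that $\mathfrak g$, respectively $\mathfrak f$, genuinely lies in the homogeneous domain of $\Lambda^{-1}$ --- which is exactly what the hypotheses $\arr g\in\dot W^2_{-1}(\R^n)$ and $\arr f\in\dot W\!A^2_{m-1,0}(\R^n)$ encode --- and that both the quadratic estimate of Theorem~\ref{C:thm:square} and the comparison $\Lambda\simeq\lvert\nabla_\pureH\rvert$ survive in this homogeneous setting; this is where the ellipticity bounds \eqref{C:eqn:elliptic} and \eqref{C:eqn:elliptic:bounded} and the $t$-independence of $\mat A$ are really used, through resolvent bounds for $\mathcal D_B$ and Poincar\'e-type comparisons of $\mathcal D_B$ with $\nabla_\pureH$ on its spectral subspaces. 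Finally, since $\D^{\mat A}$ and $\s^L$ are a priori defined only on $\dot W\!A^2_{m-1,1/2}(\R^n)$ and its dual, I would first prove the bounds for boundary data in a dense subclass for which $\s^L\arr g,\D^{\mat A}\arr f\in\dot W^2_m(\R^\dmn_\pm)$ and all quantities above are manifestly finite, and then \emph{define} the extensions of $\D^{\mat A}$ and $\s^L$ to $\dot W\!A^2_{m-1,0}(\R^n)$ and $\dot W^2_{-1}(\R^n)$ by continuity using the bounds just established.
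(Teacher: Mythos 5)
Your plan hinges entirely on a first--order Cauchy--Riemann type reformulation: writing $\partial_t F=-\mathcal D_B F$ for the conormal gradient, extracting a sectorial $\Lambda$, representing $\nabla^m\s^L\arr g$ and $\nabla^m\D^{\mat A}\arr f$ as $e^{-t\Lambda}$ applied to boundary data, and then inserting $\Lambda^{-1}\Lambda$ and invoking $\Lambda\simeq\lvert\nabla_\pureH\rvert$. For second order $t$-independent systems this machinery is indeed available, through \cite{AusAM10,AusM14,AusS14p} and the semigroup/layer-potential equivalence of \cite{Ros13}, and your computation would then be a correct (if slightly compressed) way to pass from data in $L^2$/$\dot W^2_1$ to data in $\dot W^2_{-1}$/$L^2$. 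But none of this exists off-the-shelf for operators of order $2m\geq 4$, and it is \emph{not} in the references you cite: \cite{BarHM15p} and \cite{Bar17p} define $\D^{\mat A}$ and $\s^L$ through the Newton potential formulas~\eqref{C:dfn:D:newton}--\eqref{C:dfn:S:newton}, not through a first-order ODE, and there is no $\mathcal D_B$, no bisectorial functional calculus, no spectral splitting, and no semigroup representation of the potentials in those papers. For the argument to run one would have to (i) set up the right higher-order analogue of the conormal gradient (the raw array $\nabla^m u$ does \emph{not} satisfy a closed first-order evolution, since $\partial_t\nabla^m u$ contains $\partial_t^{m+1}u$, which can only be recovered from $\nabla^m u$ after a nontrivial algebraic elimination using $Lu=0$ and the $\mat A$-twisted top components); (ii) prove bisectoriality and quadratic estimates for the resulting $\mathcal D_B$; (iii) prove the comparison $\Lambda\simeq\lvert\nabla_\pureH\rvert$ on the relevant homogeneous spectral subspace; and (iv) prove the higher-order Rosén-type identification of $\D^{\mat A}$ and $\s^L$ with $e^{-t\Lambda}$. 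Each of these is a substantial theorem in its own right, not something that ``should follow from'' the Caccioppoli inequality and Lemma~\ref{C:lem:slices} as your proposal suggests.

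The paper in fact signals that it is deliberately avoiding this route: in the introduction the authors write that \cite{AusM14,AusS14p} take the semigroup approach, whereas here layer potentials are used directly, with \cite{Ros13} mentioned only to note the two viewpoints agree in the second-order case. The theorem you are asked about is imported verbatim from \cite{BarHM17pA}, whose stated purpose is precisely to bound the layer potentials $\nabla^m\D^{\mat A}\arr f$ and $\nabla^m\s^L\arr g$ with rough inputs, and (judging from the companion papers) the arguments there are carried out on the Newton-potential definitions themselves --- duality with the smooth estimates of Theorem~\ref{C:thm:square}, integration by parts in~$t$, Lemma~\ref{C:lem:slices}, and the Caccioppoli inequality --- rather than by passing to a first-order system. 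So the gap is not in a single step: the entire framework you rely on would first have to be constructed for $m\geq 2$, and until that is done the insertion of $\Lambda^{-1}\Lambda$ and the claimed comparability $\doublebar{\mathfrak g}_{\dot W^2_{-1}}\lesssim\doublebar{\arr g}_{\dot W^2_{-1}}$, $\doublebar{\mathfrak f}_{\dot W^2_{-1}}\lesssim\doublebar{\arr f}_{L^2}$ have no meaning.
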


\subsection{Trace theorems}
\label{C:sec:trace}

Let $u$ be a solution to $Lu=0$ in $\R^\dmn_\pm$. We will need estimates on the Dirichlet and Neumann boundary values of~$u$. 
We remark that the following theorems are stated only in the upper half-space $\R^\dmn_+$; however, by considering the change of variables $(x,t)\mapsto (x,-t)$, we may derive the corresponding results in the lower half-space.

\begin{thm}[{\cite[Theorem~\ref*{B:thm:Dirichlet:1}]{BarHM17pB}}]
\label{C:thm:Dirichlet:1}
Let $L$ be as in Theorem~\ref{C:thm:square}. Let $v$ satisfy the bound
\begin{equation*}\int_{\R^n}\int_0^\infty \abs{\nabla^m v(x,t)}^2\,t\,dx\,dt<\infty\end{equation*}
and suppose that $Lv=0$ in $\R^\dmn_+$.

Then there is some function $P$ defined in $\R^\dmn_+$ with $\nabla^m P=0$ (that is, a polynomial of degree at most $m-1$) such that
\begin{align*}
\sup_{t>0} \doublebar{\nabla^{m-1} v(\,\cdot\,,t)-\nabla^{m-1} P}_{L^2(\R^n)}^2
&\leq 
C\int_{\R^n}\int_0^\infty \abs{\nabla^m v(x,t)}^2\,t\,dx\,dt
,
\\
\lim_{t\to \infty} \doublebar{\nabla^{m-1} v(\,\cdot\,,t)-\nabla^{m-1} P}_{L^2(\R^n)}&=0
.\end{align*}
Furthermore, there is some array of functions $\arr f\in L^1_{loc}(\R^n)$ such that 
\begin{equation*}\doublebar{\nabla^{m-1} v(\,\cdot\,,t)- \arr f}_{L^2(\R^n)}\to 0\quad\text{ as $t\to 0^+$},\end{equation*}
and such that
\begin{equation*}
\doublebar{\arr f-\nabla^{m-1} P}_{L^2(\R^n)}^2\leq C\int_{\R^n}\int_0^\infty \abs{\nabla^m v(x,t)}^2\,t\,dx\,dt
.\end{equation*}
\end{thm}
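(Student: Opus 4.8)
The plan is to read off the boundary behaviour of~$v$ from interior regularity for the $t$-independent operator~$L$ together with a telescoping argument in the transverse variable~$t$, the one genuinely delicate point being to sum the contributions of all the dyadic scales $t\sim 2^{j}$ without a logarithmic loss. First, since $\mat A$ is $t$-independent, Lemma~\ref{C:lem:slices} (and, for intermediate derivatives, the Caccioppoli inequality Lemma~\ref{C:lem:Caccioppoli}) applies; tiling $\R^n$ at height~$t$ by cubes of side-length~$t/2$, summing, and using that the components of $\nabla^{m-1}\partial_t v$ are among those of $\nabla^m v$, I would record the pointwise-in-$t$ bound
\begin{equation*}
\doublebar{\nabla^m v(\,\cdot\,,t)}_{L^2(\R^n)}^2 + \doublebar{\nabla^{m-1}\partial_t v(\,\cdot\,,t)}_{L^2(\R^n)}^2 \le \frac{C}{t}\int_{t/2}^{2t}\int_{\R^n}\abs{\nabla^m v(x,s)}^2\,dx\,ds .
\end{equation*}
Writing $I(t)=\int_{t/2}^{2t}\int_{\R^n}\abs{\nabla^m v(x,s)}^2\,s\,dx\,ds$, the right-hand side is comparable to $t^{-2}I(t)$; since $s\mapsto\int_{\R^n}\abs{\nabla^m v(x,s)}^2 s\,dx$ is integrable on~$(0,\infty)$ by hypothesis, $I(t)\to0$ both as $t\to0^+$ and as $t\to\infty$, and $\int_0^\infty I(t)\,t^{-1}\,dt\le C\doublebar{v}_*^2$, where I abbreviate $\doublebar{v}_*^2=\int_{\R^n}\int_0^\infty\abs{\nabla^m v(x,t)}^2 t\,dt\,dx$, the right side of the asserted inequalities.

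Next, for $0<t_1<t_2$ I would write $\nabla^{m-1}v(\,\cdot\,,t_2)-\nabla^{m-1}v(\,\cdot\,,t_1)=\int_{t_1}^{t_2}\nabla^{m-1}\partial_s v(\,\cdot\,,s)\,ds$, split the $s$-integral over the dyadic intervals $(2^{j},2^{j+1})$, and bound the $L^2(\R^n)$ norm of each dyadic piece by $C\,I(2^{j})^{1/2}$ via the slice bound. The key claim is that these pieces are \emph{almost orthogonal} in $L^2(\R^n)$: interior regularity for the $t$-independent operator~$L$, used together with the equation $Lv=0$ to trade $\partial_t$-derivatives for horizontal ones, shows that $\nabla^{m-1}\partial_s v(\,\cdot\,,s)$ is, in a Littlewood--Paley sense, frequency-localized at scale $\abs{\xi}\sim1/s$. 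A Cotlar--Stein type summation of the almost-orthogonal pieces then yields, for all $0<t_1<t_2$,
\begin{equation*}
\doublebar{\nabla^{m-1}v(\,\cdot\,,t_2)-\nabla^{m-1}v(\,\cdot\,,t_1)}_{L^2(\R^n)}^2\le C\int_{\R^n}\int_{t_1}^{t_2}\abs{\nabla^m v(x,s)}^2\,s\,ds\,dx .
\end{equation*}
Taking $t_1,t_2\to\infty$ and invoking the Cauchy criterion produces an array~$\arr c$ with $\doublebar{\nabla^{m-1}v(\,\cdot\,,t)-\arr c}_{L^2(\R^n)}\to0$ as $t\to\infty$; since $\doublebar{\nabla_\pureH\nabla^{m-1}v(\,\cdot\,,t)}_{L^2(\R^n)}\le C\doublebar{\nabla^m v(\,\cdot\,,t)}_{L^2(\R^n)}\to0$ by the slice bound, $\arr c$ satisfies $\nabla_\pureH\arr c=0$ distributionally, hence is a constant array, hence equals $\nabla^{m-1}P$ for a polynomial~$P$ of degree at most~$m-1$. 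Taking instead $t_1,t_2\to0^+$ (where $\int_{\R^n}\int_0^{t}\abs{\nabla^m v}^2 s\,ds\,dx\to0$ by absolute continuity) shows $\nabla^{m-1}v(\,\cdot\,,t)$ is Cauchy in $L^2(\R^n)$ as $t\to0^+$, hence converges in $L^2(\R^n)$ to some array~$\arr f$; since $\arr f$ differs from the slice $\nabla^{m-1}v(\,\cdot\,,1)$ by an $L^2$ array, $\arr f\in L^1_{loc}(\R^n)$.

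The stated estimates then follow by passing to the limit in the telescoping inequality: letting $t_2\to\infty$ gives $\doublebar{\nabla^{m-1}v(\,\cdot\,,t)-\nabla^{m-1}P}_{L^2(\R^n)}^2\le C\doublebar{v}_*^2$ for every $t>0$ and that this quantity tends to~$0$ as $t\to\infty$; letting $t_1\to0^+$ and $t_2\to\infty$ gives $\doublebar{\arr f-\nabla^{m-1}P}_{L^2(\R^n)}^2\le C\doublebar{v}_*^2$; and the $L^2(\R^n)$ convergence of $\nabla^{m-1}v(\,\cdot\,,t)$ to~$\arr f$ as $t\to0^+$ was obtained above. The lower half-space statement follows from the substitution $(x,t)\mapsto(x,-t)$.

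The main obstacle is the almost-orthogonality input in the second step. The naive bound
\[
\int_{t_1}^{t_2}\doublebar{\nabla^{m-1}\partial_s v(\,\cdot\,,s)}_{L^2(\R^n)}\,ds\le\Bigl(\int_{t_1}^{t_2}\doublebar{\nabla^{m-1}\partial_s v(\,\cdot\,,s)}_{L^2(\R^n)}^2 s\,ds\Bigr)^{1/2}\bigl(\log(t_2/t_1)\bigr)^{1/2}
\]
carries a logarithm that diverges as $t_1\to0^+$ or $t_2\to\infty$, so the dyadic pieces cannot be summed by the triangle inequality or a crude Cauchy--Schwarz; the saving has to come from the Littlewood--Paley structure of solutions of the $t$-independent equation, i.e.\ from quantitative interior regularity. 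Everything else (the slice bounds, the Fubini computations, the identification of~$P$) is routine.
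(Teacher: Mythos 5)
This theorem is \emph{not} proven in the present paper: it is stated in Section~\ref{C:sec:trace} with an explicit attribution to a companion paper, so there is no ``paper's own proof'' here to compare against. I can only evaluate your argument on its own terms.

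Your outline correctly identifies the crux of the matter, but it does not resolve it. The bookkeeping is fine: tiling by cubes of side $t/2$ and applying Lemma~\ref{C:lem:slices} and Caccioppoli gives the slice bound $\doublebar{\nabla^m v(\cdot,t)}_{L^2(\R^n)}^2\le Ct^{-2}I(t)$, the Fubini computation $\int_0^\infty I(t)\,t^{-1}\,dt\lesssim\doublebar{v}_*^2$ is correct, the identification of $\arr c$ as $\nabla^{m-1}P$ for a polynomial (via $\nabla_\pureH\arr c=0$ plus the compatibility of the limiting array) is fine, and the passage from the telescoping inequality to the four stated conclusions is routine. However, the entire theorem lives or dies on the inequality
\begin{equation*}
\doublebar{\nabla^{m-1}v(\,\cdot\,,t_2)-\nabla^{m-1}v(\,\cdot\,,t_1)}_{L^2(\R^n)}^2\le C\int_{\R^n}\int_{t_1}^{t_2}\abs{\nabla^m v(x,s)}^2\,s\,ds\,dx,
\end{equation*}
and what you offer for it is an assertion, not a proof. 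You say that $\nabla^{m-1}\partial_s v(\,\cdot\,,s)$ is ``frequency-localized at scale $\abs\xi\sim1/s$'' by ``interior regularity'' plus trading $\partial_t$ for $\nabla_\pureH$ via $Lv=0$, and that a Cotlar--Stein summation then closes the estimate. For an operator with merely bounded measurable $t$-independent coefficients there is no such Fourier-side localization of slices of a solution; interior regularity gives you $L^2$ bounds over Whitney boxes (Lemma~\ref{C:lem:slices}) and nothing about the spectral content of $\nabla^{m-1}\partial_s v(\,\cdot\,,s)$. Moreover the ``pieces'' $w_j=\int_{2^j}^{2^{j+1}}\nabla^{m-1}\partial_s v(\,\cdot\,,s)\,ds$ are fixed arrays, not operators, so ``Cotlar--Stein'' here just means a summable decay of the inner products $\langle w_j,w_k\rangle$; you give no mechanism producing such decay for a rough operator. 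As you yourself note, the naive Cauchy--Schwarz bound carries $\log(t_2/t_1)$, so without that mechanism the argument does not close.

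The actual proofs of such transversal trace estimates in this setting (in the companion paper and in the second-order precedents it builds on, e.g.\ \cite{HofKMP15B}, \cite{AusAM10}) do not proceed through a clean almost-orthogonality of time slices. They rely on the representation of solutions via layer potentials or the associated semigroups, on the square-function bounds such as Theorem~\ref{C:thm:square}, on off-diagonal / Carleson-measure estimates for those operators, and on a considerably more intricate duality or $T(b)$-type machinery; the quantitative structure you attribute to ``Littlewood--Paley'' is supplied by those operator estimates, not by interior regularity of $v$ alone. In short: the skeleton of your argument is the right one, but the load-bearing step is precisely the deep part of the theorem and your sketch replaces it with a heuristic that does not hold at the stated generality. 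This is a genuine gap, not a routine omission.
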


\begin{thm} [{\cite[Theorem~\ref*{B:thm:Dirichlet:2}]{BarHM17pB}}]
\label{C:thm:Dirichlet:2}
Let $L$ be as in Theorem~\ref{C:thm:square}. Let $w\in \dot W^2_{m,loc}(\R^\dmn_+)$  satisfy the bound
\begin{equation*}\int_{\R^n}\int_0^\infty \abs{\nabla^m \partial_t w(x,t)}^2\,t\,dx\,dt<\infty\end{equation*}
and suppose that $Lw=0$ in $\R^\dmn_+$.

Then there is some array $\arr p$ of functions defined on $\R^n$ such that
\begin{align*}
\sup_{t>0} \doublebar{\nabla^{m} w(\,\cdot\,,t)-\arr p}_{L^2(\R^n)}^2
&\leq 
C\int_{\R^n}\int_0^\infty \abs{\nabla^m \partial_t w(x,t)}^2\,t\,dx\,dt
,\\
\lim_{t\to\infty} \doublebar{\nabla^{m} w(\,\cdot\,,t)-\arr p}_{L^2(\R^n)}
&=0
.\end{align*}
Furthermore, there is some array of functions $\arr f\in L^1_{loc}(\R^n)$ such that 
\begin{equation*}\doublebar{\nabla^{m} w(\,\cdot\,,t)- \arr f}_{L^2(\R^n)}\to 0\quad\text{ as $t\to 0^+$},\end{equation*}
and such that
\begin{equation*}
\doublebar{\arr f-\arr p}_{L^2(\R^n)}^2\leq C\int_{\R^n}\int_0^\infty \abs{\nabla^m \partial_t w(x,t)}^2\,t\,dx\,dt
.\end{equation*}

If $\nabla^{m} w(\,\cdot\,,t)\in L^2(\R^n)$ for some $t>0$, then $\arr p=0$. 
\end{thm}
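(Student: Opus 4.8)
The plan is to reduce the statement to Theorem~\ref{C:thm:Dirichlet:1} applied to the vertical derivative $v:=\partial_t w$, and then to treat separately the components of $\nabla^m w$ that are purely horizontal derivatives of order~$m$. Write $E:=\int_{\R^n}\int_0^\infty\abs{\nabla^m\partial_t w(x,t)}^2\,t\,dt\,dx$. Since $\mat A$ is $t$-independent, $\partial_t$ commutes with~$L$, so (by a standard difference-quotient argument) $v:=\partial_t w$ is again a solution, with $v\in\dot W^2_{m,loc}(\R^\dmn_+)$ and $\int_{\R^n}\int_0^\infty\abs{\nabla^m v(x,t)}^2\,t\,dt\,dx=E<\infty$. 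Thus Theorem~\ref{C:thm:Dirichlet:1} applies to~$v$: there is a polynomial $P$ with $\nabla^m P=0$ such that $\nabla^{m-1}v(\,\cdot\,,t)-\nabla^{m-1}P$ is bounded in $L^2(\R^n)$ with square-norm at most~$CE$, tends to~$0$ in $L^2(\R^n)$ as $t\to\infty$, and has an $L^2(\R^n)$ trace $\arr f_0-\nabla^{m-1}P$ as $t\to0^+$, again of square-norm at most~$CE$.

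For a multiindex $\zeta$ with $\abs\zeta=m$ and $\zeta_\dmn\geq1$ one has $\partial^\zeta w=\partial^{\zeta-\vec e_\dmn}v=(\nabla^{m-1}v)_{\zeta-\vec e_\dmn}$; since $\abs{\zeta-\vec e_\dmn}=m-1$ and $\nabla^m P=0$, the entry $\arr p_\zeta:=(\nabla^{m-1}P)_{\zeta-\vec e_\dmn}$ is a constant, and the conclusions of the previous paragraph give exactly the required bounds for these components of $\nabla^m w$ (the uniform bound, the limit as $t\to\infty$, and the trace at $t\to0^+$, with the stated constants), with $\arr f_\zeta:=(\arr f_0)_{\zeta-\vec e_\dmn}$. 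For a purely horizontal $\zeta$, i.e.\ $\zeta_\dmn=0$, write $\zeta=\vec e_j+\zeta'$ with $1\leq j\leq n$; then $\partial^\zeta_\pureH w$ is \emph{not} a component of $\nabla^{m-1}v$, but $\partial_t\bigl(\partial^\zeta_\pureH w\bigr)=\partial^\zeta_\pureH v=\partial_{x_j}\arr Q_{\zeta'}$, where $\arr Q:=\nabla^{m-1}v-\nabla^{m-1}P=\nabla^{m-1}(v-P)$ and $L(v-P)=0$; by the previous paragraph $\arr Q(\,\cdot\,,t)$ is bounded in $L^2(\R^n)$ (square-norm $\leq CE$), decays to~$0$ as $t\to\infty$, and has an $L^2(\R^n)$ trace at $t=0$.

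The main obstacle is to pass from these qualitative properties of $\arr Q$ to the conclusion that $\partial^\zeta_\pureH w(\,\cdot\,,t)=\arr p_\zeta+R_\zeta(\,\cdot\,,t)$, where $\arr p_\zeta$ is a function on $\R^n$ (independent of~$t$) and $R_\zeta(\,\cdot\,,t)$ is bounded in $L^2(\R^n)$ by~$CE$ in square-norm, decays as $t\to\infty$, and converges in $L^2(\R^n)$ as $t\to0^+$. This does \emph{not} follow simply by integrating $\partial_t\bigl(\partial^\zeta_\pureH w\bigr)$ in~$t$: the weighted bound $E<\infty$, even combined with the Caccioppoli inequality (Lemma~\ref{C:lem:Caccioppoli}) and Lemma~\ref{C:lem:slices}, does not make $\doublebar{\nabla^m v(\,\cdot\,,r)}_{L^2(\R^n)}$ integrable in~$r$. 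Instead one must exploit the finer structure of $t$-independent solutions: the relevant piece of $w$ is reproduced from its $L^2$ Dirichlet and Neumann data by a Green-type representation of the kind of~\eqref{C:eqn:green} (suitably adapted to solutions whose gradient is only slice-wise in~$L^2$), and the purely horizontal derivatives of such a representation differ from a $t$-independent array precisely by the action on that $L^2$ data of bounded, $L$-adapted singular integrals of Riesz-transform type; this produces the uniform bound and the decay as $t\to\infty$. The trace at $t\to0^+$ and the estimate $\doublebar{\arr f-\arr p}_{L^2(\R^n)}^2\leq CE$ then follow by combining the same representation with the interior estimates of Lemmas~\ref{C:lem:Caccioppoli} and~\ref{C:lem:slices} to get convergence in $L^2_{loc}(\R^n)$, and then upgrading this to the quantitative $L^2$ bound using the uniform estimate already in hand. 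This step carries the bulk of the work.

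For the final assertion, suppose $\nabla^m w(\,\cdot\,,t_0)\in L^2(\R^n)$ for some $t_0>0$. Then the uniform bound forces $\arr p\in L^2(\R^n)$; its components with $\zeta_\dmn\geq1$ are constants, hence~$0$, so $\nabla^{m-1}P=0$. The compatibility relations $\partial_{x_i}\partial^{\gamma+\vec e_j}_\pureH w=\partial_{x_j}\partial^{\gamma+\vec e_i}_\pureH w$ (for $\abs\gamma=m-1$, $\gamma_\dmn=0$, $1\leq i,j\leq n$) pass to the $L^2$ limit as $t\to\infty$, so the horizontal part of $\arr p$ equals $\nabla^m_\pureH\Phi$ for some $\Phi\in\dot W^2_m(\R^n)$. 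Testing $Lw=0$ against functions $\phi(x)\eta(t)$ with $\phi\in C^\infty_0(\R^n)$ and $\eta$ smooth and supported near a large height~$T$, and letting $T\to\infty$ (so that $\nabla^m w(\,\cdot\,,t)\to\arr p$ with the $t$-derivative components of $\arr p$ vanishing), shows that $\Phi$ is a weak solution on $\R^n$ of the $t$-independent operator underlying~\eqref{C:eqn:elliptic:slices:weak}; testing that equation against $\Phi$ itself — legitimate by density of $C^\infty_0(\R^n)$ in $\dot W^2_m(\R^n)$ — and invoking~\eqref{C:eqn:elliptic:slices:weak} gives $\nabla^m_\pureH\Phi=0$, i.e.\ $\arr p=0$.
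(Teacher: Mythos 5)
This theorem is not proved in the present paper: it is quoted verbatim as Theorem~\ref*{B:thm:Dirichlet:2} from the companion work \cite{BarHM17pB}, so there is no internal proof to compare against. Your attempt therefore has to stand on its own, and on that basis it contains a genuine gap.

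Your reduction to Theorem~\ref{C:thm:Dirichlet:1} applied to $v:=\partial_t w$ is sound, and it cleanly handles every component $\partial^\zeta w$ with $\zeta_\dmn\geq 1$: these are entries of $\nabla^{m-1}v$, and since $\nabla^m P=0$ the corresponding entries $\arr p_\zeta=(\nabla^{m-1}P)_{\zeta-\vec e_\dmn}$ of $\arr p$ are constants. You also correctly diagnose that the purely horizontal components cannot be recovered by integrating $\partial_t(\partial^\zeta_\pureH w)=\partial_{x_j}Q_{\zeta'}$ in $t$, since uniform $L^2$ control of $Q(\,\cdot\,,t)$ does not make $\doublebar{\nabla^m v(\,\cdot\,,t)}_{L^2(\R^n)}$ integrable in~$t$. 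But that is precisely where the real content of the theorem lives, and your treatment of it is a roadmap rather than a proof: phrases such as ``a Green-type representation of the kind of~\eqref{C:eqn:green} (suitably adapted),'' ``bounded, $L$-adapted singular integrals of Riesz-transform type,'' and ``upgrading $L^2_{loc}$ convergence to the quantitative $L^2$ bound'' name the sort of machinery one expects to use without exhibiting what the operators are, why they are bounded, or how the hypothesis $\int\int\abs{\nabla^m\partial_t w}^2 t\,dt\,dx<\infty$ enters. Worse, the Green's formula for solutions in this class that does appear in this paper, Theorem~\ref{C:thm:green}, is itself proved by repeatedly invoking Theorem~\ref{C:thm:Dirichlet:2}; so appealing to it here, or to anything derived from it in this paper, would be circular. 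You would need to prove an independent representation formula, and that work is the theorem.

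Your argument for the final assertion (that $\nabla^m w(\,\cdot\,,t_0)\in L^2(\R^n)$ forces $\arr p=0$) is a nice self-contained piece of reasoning: the $\zeta_\dmn\geq 1$ entries of $\arr p$ are $L^2$ constants, hence zero; the compatibility relations pass to the $L^2(\R^n)$ limit and show the horizontal block equals $\nabla^m_\pureH\Phi$; and testing $Lw=0$ against $\phi(x)\eta(t)$ with $\eta$ pushed to large heights, so that $\int\partial_t^k\eta=0$ kills all the mixed terms in the limit, shows $\Phi$ weakly solves the coercive horizontal operator, whence $\nabla^m_\pureH\Phi=0$ by~\eqref{C:eqn:elliptic:slices:weak}. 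That part is essentially correct, modulo routine density considerations. But since it presupposes the existence of $\arr p$, the uniform $L^2$ bound on $\nabla^m w(\,\cdot\,,t)-\arr p$, and the limit as $t\to\infty$ --- exactly the things left unproved for the horizontal components --- it cannot carry the rest of the theorem.
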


\begin{thm}[{\cite[Theorem~\ref*{B:thm:Neumann:1}]{BarHM17pB}}]
\label{C:thm:Neumann:1}
Let $L$ be as in Theorem~\ref{C:thm:square} and let $v$ be as in Theorem~\ref{C:thm:Dirichlet:1}.

Then for all $\varphi$ smooth and compactly supported, we have that
\begin{equation*}{\langle \mat A\nabla^m v(\,\cdot\,,t),\nabla^m \mathcal{E}\varphi(\,\cdot\,,t)\rangle_{\R^n}}\end{equation*}
represents an absolutely convergent integral for any fixed $t>0$ and is continuous in~$t$.

Furthermore,
\begin{multline*}
\sup_{0<\varepsilon<T}
\abs[bigg]{
\int_\varepsilon^T
\langle \mat A\nabla^m v(\,\cdot\,,t),\nabla^m \mathcal{E}\varphi(\,\cdot\,,t)\rangle_{\R^n}\,dt 
}
\\\leq 
C \doublebar{\nabla_\pureH \Tr_{m-1}^+\varphi}_{L^2(\R^n)}
\biggl(\int_{\R^n}\int_0^\infty \abs{\nabla^m v(x,t)}^2\,t\,dx\,dt\biggr)^{1/2}
\end{multline*}
and the limit
\begin{equation*}\lim_{\varepsilon\to 0^+} \lim_{T\to \infty} \int_\varepsilon^T
\langle \mat A\nabla^m v(\,\cdot\,,t),\nabla^m \mathcal{E}\varphi(\,\cdot\,,t)\rangle_{\R^n}\,dt 
\end{equation*}
exists,
and so we have the bound
\begin{align*}
\abs{\langle\M_{\mat A}^+v,\Tr_{m-1}\varphi\rangle_{\R^\dmn_+}}
&\leq C \doublebar{\nabla_\pureH \Tr_{m-1}^+\varphi}_{L^2(\R^n)}
\biggl(\int_{\R^n}\int_0^\infty \abs{\nabla^m v(x,t)}^2\,t\,dx\,dt\biggr)^{1/2}
.\end{align*}
\end{thm}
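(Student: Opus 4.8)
The plan is to prove the three assertions of the theorem in turn. Throughout, write $\mathcal N:=\bigl(\int_{\R^n}\int_0^\infty\abs{\nabla^m v(x,t)}^2\,t\,dt\,dx\bigr)^{1/2}$ for the square function of \eqref{C:eqn:rough:estimate}, and use freely the identity $\mathcal E\varphi(\,\cdot\,,t)=\mathcal Q^m_t\widetilde\Phi(\,\cdot\,,t)$ coming from \eqref{C:eqn:Neumann:extension}, where $\widetilde\Phi(x,t):=\sum_{k=0}^{m-1}\frac{t^k}{k!}\varphi_k(x)$ is the Taylor extension of the Dirichlet data; together with the semigroup facts that $t\partial_t\mathcal Q^m_t$ and $(t\nabla_\pureH)^j\mathcal Q^m_t$ ($j\ge1$) are uniformly bounded on $L^2(\R^n)$ and enjoy the Littlewood--Paley square function bound $\int_0^\infty\doublebar{(t\partial_t\mathcal Q^m_t)h}_{L^2}^2\frac{dt}{t}\le C\doublebar{h}_{L^2}^2$, while $\partial_t^j\mathcal Q^m_t=t^{-j}\cdot(\text{uniformly }L^2\text{-bounded})$. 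For the first assertion, the Caccioppoli inequality (Lemma~\ref{C:lem:Caccioppoli}) and the slice estimate (Lemma~\ref{C:lem:slices}) give $\doublebar{\nabla^m v(\,\cdot\,,t)}_{L^2(\R^n)}\le Ct^{-1}\mathcal N<\infty$ for every $t>0$, with $t\mapsto\nabla^m v(\,\cdot\,,t)$ continuous into $L^2(\R^n)$ by interior regularity; since every component of $\nabla^m\mathcal E\varphi(\,\cdot\,,t)$ also lies in $L^2(\R^n)$ and depends continuously on $t>0$, Cauchy--Schwarz yields both the absolute convergence and the continuity.

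For the uniform bound (second assertion), split $\int_\varepsilon^T=\int_\varepsilon^{\min(2\varepsilon,T)}+\int_{\min(2\varepsilon,T)}^T$. Using that $\doublebar{\nabla^m\mathcal E\varphi(\,\cdot\,,t)}_{L^2(\R^n)}\le C\doublebar{\nabla_\pureH\Tr_{m-1}^+\varphi}_{L^2(\R^n)}$ uniformly in $t>0$, the first piece is bounded by $C\doublebar{\nabla_\pureH\Tr_{m-1}^+\varphi}_{L^2(\R^n)}\mathcal N$ via Cauchy--Schwarz and $\int_\varepsilon^{2\varepsilon}\doublebar{\nabla^m v(\,\cdot\,,t)}_{L^2}^2\,dt\le\varepsilon^{-1}\int_\varepsilon^{2\varepsilon}\doublebar{\nabla^m v(\,\cdot\,,t)}_{L^2}^2\,t\,dt\le\varepsilon^{-1}\mathcal N^2\cdot\varepsilon$; so assume $T\ge2\varepsilon$. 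Distributing the $m$ derivatives in $\nabla^m(\mathcal Q^m_t\widetilde\Phi)$ and applying the semigroup relations, write $\nabla^m\mathcal E\varphi=\arr G+\arr B$, where $\arr G$ collects every term in which a vertical derivative has fallen on $\mathcal Q^m_t$ or in which a power $t^{\ell-k}$ with $\ell>k$ survives, while $\arr B(\,\cdot\,,t)=\mathcal Q^m_t\arr b$ with $\arr b$ the fixed array whose $\alpha$-component is $\partial^\alpha\varphi(\,\cdot\,,0)$ for $\abs\alpha=m$, $\alpha\ne m\e_\dmn$ (hence a first horizontal derivative of a component of $\Tr_{m-1}^+\varphi$), and $0$ when $\alpha=m\e_\dmn$. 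The two facts to check are $\int_0^\infty\doublebar{\arr G(\,\cdot\,,t)}_{L^2(\R^n)}^2\frac{dt}{t}\le C\doublebar{\nabla_\pureH\Tr_{m-1}^+\varphi}_{L^2(\R^n)}^2$ and $\doublebar{\arr b}_{L^2(\R^n)}\le C\doublebar{\nabla_\pureH\Tr_{m-1}^+\varphi}_{L^2(\R^n)}$; the first is proved by doing the $t$-integral first on the Fourier side, where the apparently singular powers of $t$ are compensated exactly by the vanishing at the origin of the semigroup symbols $p\bigl((t\abs\xi)^{2m}\bigr)e^{-(t\abs\xi)^{2m}}$ (with $p(0)=0$), leaving precisely one net horizontal derivative. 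Then, by Cauchy--Schwarz in the measures $t\,dt$ and $\frac{dt}{t}$,
\[\Bigl|\int_\varepsilon^T\langle\mat A\nabla^m v(\,\cdot\,,t),\arr G(\,\cdot\,,t)\rangle_{\R^n}\,dt\Bigr|\le\Lambda\,\mathcal N\Bigl(\int_0^\infty\doublebar{\arr G(\,\cdot\,,t)}_{L^2(\R^n)}^2\frac{dt}{t}\Bigr)^{1/2}\le C\doublebar{\nabla_\pureH\Tr_{m-1}^+\varphi}_{L^2(\R^n)}\mathcal N,\]
uniformly in $\varepsilon,T$.

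There remains $\int_\varepsilon^T\langle\mat A\nabla^m v(\,\cdot\,,t),\mathcal Q^m_t\arr b\rangle_{\R^n}\,dt$, and this is where the equation enters. Since each component of $\arr b$ is a horizontal derivative, an integration by parts in $x$ — after moving the self-adjoint $\mathcal Q^m_t$ onto $\mat A\nabla^m v$ — followed by the identity $\sum_{\abs\alpha=m}\partial^\alpha(\mat A\nabla^m v)_\alpha=(-1)^m Lv=0$ (see \eqref{C:eqn:L}), replaces these horizontal derivatives of $\mat A\nabla^m v$ by $\partial_t$-derivatives of its conormal components. An integration by parts in $t$ then moves each such $\partial_t$ onto $\mathcal Q^m_t$, producing one more term of exactly $\arr G$-type (controlled as above) plus boundary contributions at $t=\varepsilon$ and $t=T$; carrying this out on $(\varepsilon,\infty)$ instead of $(\varepsilon,T)$ — legitimate since the integrals in question converge absolutely at infinity and the $t=T$ boundary term tends to $0$ because $\doublebar{\nabla^m v(\,\cdot\,,T)}_{L^2(\R^n)}\le CT^{-1}\mathcal N$ — leaves only the boundary term at $t=\varepsilon$: the conormal of $v$ at height $\varepsilon$ paired against $\mathcal Q^m_\varepsilon$ applied to horizontal derivatives of the Dirichlet data. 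I expect this last term to be the main obstacle, since the conormal at height $\varepsilon$ has $L^2$-norm only of order $\varepsilon^{-1}\mathcal N$ and so no bound follows from Cauchy--Schwarz alone; the natural way to extract the required $O(1)$ estimate is to use $Lv=0$ once more to write this conormal as a $t$-integral from $\varepsilon$ to $\infty$ of $\partial_t(\text{conormal})$, pair with the now $t$-independent mollified test function $\mathcal Q^m_\varepsilon(\cdots)$, split dyadically over the strips $\R^n\times(2^k\varepsilon,2^{k+1}\varepsilon)$, replace $\mathcal Q^m_\varepsilon$ by $\mathcal Q^m_{2^k\varepsilon}$ on the $k$-th strip at a cost governed by the semigroup smoothing (calibrated precisely to the $\varepsilon$-scale of the boundary), estimate each strip by the near-diagonal bound from the $T\le2\varepsilon$ case, and sum the resulting series using the finiteness of $\mathcal N$ and the off-diagonal decay of the kernels of $\mathcal Q^m_s$. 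It is at this step that the equation, the square function bound, and the one-sided nature of the estimate are all used essentially.

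Finally, each of the pieces above converges as $T\to\infty$ and then $\varepsilon\to0^+$: the $\arr G$-pieces by dominated convergence (their integrands being $L^1(dt)$-dominated after the Cauchy--Schwarz pairing of $t\,dt$ against $\frac{dt}{t}$), the $t=T$ boundary terms vanishing, and the $t=\varepsilon$ conormal term converging by the same argument that bounds it. Hence the iterated limit in \eqref{C:eqn:Neumann:E} exists, defines $\langle\M_{\mat A}^+v,\Tr_{m-1}\varphi\rangle_{\R^\dmn_+}$, and collecting the estimates above gives the asserted inequality.
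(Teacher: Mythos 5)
The paper does not prove this theorem; it is imported verbatim from \cite{BarHM17pB} (listed as ``in preparation''), so there is no in-paper proof to compare your argument against. I will therefore evaluate the proposal on its own terms.

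Your first assertion (absolute convergence and continuity in $t$) and your analysis of the $\arr G$-piece are both sound: the slice bound $\doublebar{\nabla^m v(\cdot,t)}_{L^2(\R^n)}\le Ct^{-1}\mathcal N$ from Lemmas~\ref{C:lem:Caccioppoli} and~\ref{C:lem:slices} gives the first, and your Fourier-side computation for $\int_0^\infty\doublebar{\arr G(\cdot,t)}_{L^2}^2\,dt/t$ does close up (the surviving power of $t$ always combines with the vanishing-at-zero factor $p((t\abs\xi)^{2m})$ to leave a convergent $u=t\abs\xi$ integral, and the remaining $\xi$-weight is exactly $\abs\xi^{2(m-k)}$, matching $\doublebar{\nabla_\pureH\Tr_{m-1}^+\varphi}_{L^2}^2$). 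So the split $\nabla^m\mathcal E\varphi=\arr G+\mathcal Q^m_t\arr b$ with $\doublebar{\arr b}_{L^2}\le C\doublebar{\nabla_\pureH\Tr_{m-1}^+\varphi}_{L^2}$ is correct and the $\arr G$ contribution is genuinely controlled by $\mathcal N\doublebar{\nabla_\pureH\Tr_{m-1}^+\varphi}_{L^2}$.

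The gap is in the $\arr B$-contribution, and you have correctly located it but not closed it. Two separate problems. First, the step ``$\text{IBP in }x$, then $Lv=0$ replaces horizontal derivatives of $\mat A\nabla^m v$ by $\partial_t$ of conormal components'' is only transparent when $m=1$. For $m\ge2$ the equation $\sum_{\abs\alpha=m}\partial^\alpha(\mat A\nabla^m v)_\alpha=0$ is an $m$th-order identity, while each component of $\arr b$ carries only a single horizontal derivative $\partial_j$. A single integration by parts produces $\sum_{\alpha\ne m\e_\dmn}\partial_{j(\alpha)}(\mat A\nabla^m v)_\alpha$ for some \emph{choice} of $j(\alpha)$ with $\alpha-\e_{j(\alpha)}=\gamma(\alpha)$, and this is not the divergence $\sum_\alpha\partial^\alpha(\mat A\nabla^m v)_\alpha$: it has the wrong multiplicities and the wrong order, and the decomposition $\alpha=\e_{j(\alpha)}+\gamma(\alpha)$ is not unique. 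Your sketch does not show how to reorganize the sum so that the equation is actually usable; this needs a genuine argument. Second, and more decisively, the boundary term at $t=\varepsilon$ that you flag is indeed $O(\varepsilon^{-1}\mathcal N\doublebar{g}_{L^2})$ and the dyadic replacement $\mathcal Q^m_\varepsilon\mapsto\mathcal Q^m_{2^k\varepsilon}$ does not repair this. On the Fourier side the error operator $\mathcal Q^m_\varepsilon-\mathcal Q^m_{2^k\varepsilon}$ has symbol $e^{-(2\pi\varepsilon\abs\xi)^{2m}}-e^{-(2\pi2^k\varepsilon\abs\xi)^{2m}}$, which is close to $1$ on the entire band $1/(2^k\varepsilon)\lesssim\abs\xi\lesssim1/\varepsilon$; it is \emph{not} small in operator norm on $L^2$, and since $g$ is an arbitrary $L^2$ function there is no frequency decay to exploit. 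The naive estimate on the $k$-th strip is then $\abs{2^{k}\varepsilon}\cdot\doublebar{\partial_t\arr N(\cdot,\sim2^k\varepsilon)}_{L^2}\cdot\doublebar{g}_{L^2}\lesssim(2^k\varepsilon)^{-1}\mathcal N\doublebar{g}_{L^2}$, which still sums to $O(\varepsilon^{-1})$. What you need is a mechanism that extracts cancellation across scales --- in the literature on estimates of this type the standard devices are stopping-time/Carleson-measure arguments or a $T(b)$-style reduction, not a termwise semigroup substitution --- and the proposal does not supply one. As written, the uniform bound in $\varepsilon$ is not established, and consequently neither is the existence of the iterated limit.
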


\begin{thm}[{\cite[Theorem~\ref*{B:thm:Neumann:2}]{BarHM17pB}}]
\label{C:thm:Neumann:2}
Let $L$ be as in Theorem~\ref{C:thm:square}. Let $w$ be as in Theorem~\ref{C:thm:Dirichlet:2}, and suppose further that $\nabla^{m} w(\,\cdot\,,t)\in L^2(\R^n)$ for some $t>0$ (so that $\arr p=0$).

Then for all $\varphi$ smooth and compactly supported in $\R^\dmn$ we have that
\begin{equation*}
\int_0^\infty \int_{\R^n}\abs{\langle \mat A(x)\nabla^m w(x,t),\nabla^m\mathcal{E}\varphi(x,t)\rangle}\,dx\,dt<\infty \end{equation*}
and that the bound
\begin{equation*}
\abs{\langle \M_{\mat A}^+ w,\Tr_{m-1}\varphi\rangle_{\R^n}}
\leq C \doublebar{\Tr_{m-1}^+\varphi}_{L^2(\R^n)}
\biggl(\int_0^\infty \int_{\R^n} \abs{\nabla^m\partial_t w(x,t)}^2\,t\,dx\,dt\biggr)^{1/2}
\end{equation*}
is valid.
\end{thm}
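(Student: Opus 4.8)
The plan is to compute $\M_{\mat A}^+w$ from its definition \eqref{C:eqn:Neumann:E}, integrate by parts in the vertical variable $t$ so as to replace $\nabla^m w(\,\cdot\,,t)$ by $\nabla^m\partial_t w(\,\cdot\,,t)$ — against which the weighted square function may be used — and control the boundary and bulk terms that arise using the $L^2$ control of $\nabla^m w(\,\cdot\,,t)$ from Theorem~\ref{C:thm:Dirichlet:2} together with Fourier‑side estimates on the extension $\mathcal{E}$. Write $M=\bigl(\int_{\R^n}\int_0^\infty\abs{\nabla^m\partial_t w(x,t)}^2\,t\,dt\,dx\bigr)^{1/2}$; by Theorem~\ref{C:thm:Dirichlet:2} (with $\arr p=0$) one has $\sup_{t>0}\doublebar{\nabla^m w(\,\cdot\,,t)}_{L^2(\R^n)}\le CM$, $\nabla^m w(\,\cdot\,,t)\to0$ in $L^2$ as $t\to\infty$, and $\nabla^m w(\,\cdot\,,t)\to\arr f$ in $L^2$ as $t\to0^+$ with $\doublebar{\arr f}_{L^2(\R^n)}\le CM$; interior estimates (Lemma~\ref{C:lem:slices} applied to $\partial_t w$) furthermore give $\doublebar{\nabla^m\partial_t w(\,\cdot\,,t)}_{L^2(\R^n)}\le CM/t$ for each $t>0$, so $t\mapsto\nabla^m w(\,\cdot\,,t)$ is a $C^1$ map into $L^2(\R^n)$ and the integration by parts below is licit.

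The first task is to record the needed properties of $\mathcal{E}\varphi$. Using $\mathcal{Q}_t^m$ as the Fourier multiplier $e^{-(t\abs\xi)^{2m}}$, the formula \eqref{C:eqn:Neumann:extension} for $\mathcal{E}\varphi$, and $\nabla^{m-1}\mathcal{E}\varphi(\,\cdot\,,0)=\nabla^{m-1}\varphi(\,\cdot\,,0)$, one checks that each horizontal derivative of $\mathcal{E}\varphi$ past order $m-1$ costs a factor $t^{-1}$, so $\doublebar{\nabla^m\mathcal{E}\varphi(\,\cdot\,,t)}_{L^2(\R^n)}\le Ct^{-1}\doublebar{\Tr_{m-1}\varphi}_{L^2(\R^n)}$; that for $t$ large there is a faster (but $\varphi$‑dependent) decay $\doublebar{\nabla^m\mathcal{E}\varphi(\,\cdot\,,t)}_{L^2(\R^n)}\le C_\varphi t^{-1-n/2}$; and that the components of $\nabla^m\mathcal{E}\varphi$ involving a vertical derivative are $t$‑derivatives of lower‑order expressions whose boundary trace is a single component of $\Tr_{m-1}\varphi$. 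Integrating against $t$ yields $\sup_{t>0}\bigl\lVert\int_0^t\nabla^m\mathcal{E}\varphi(\,\cdot\,,s)\,ds\bigr\rVert_{L^2(\R^n)}\le C\doublebar{\Tr_{m-1}\varphi}_{L^2(\R^n)}$ and, with more work, the quadratic bounds used below. Granting the first two, convergence is immediate: by Cauchy–Schwarz in $x$ and the uniform bound on $\nabla^m w$, $\int_{\R^n}\int_0^\infty\abs{\langle\mat A\nabla^m w(x,t),\nabla^m\mathcal{E}\varphi(x,t)\rangle}\,dx\,dt\le\Lambda CM\int_0^\infty\doublebar{\nabla^m\mathcal{E}\varphi(\,\cdot\,,t)}_{L^2(\R^n)}\,dt<\infty$, and because this integrand is absolutely integrable the iterated limit in \eqref{C:eqn:Neumann:E} collapses to $\int_0^\infty\langle\mat A\nabla^m w(\,\cdot\,,t),\nabla^m\mathcal{E}\varphi(\,\cdot\,,t)\rangle_{\R^n}\,dt$.

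For the quantitative bound one integrates by parts in $t$ — licit since $\mat A$ is $t$‑independent, so $\partial_t(\mat A\nabla^m w)=\mat A\nabla^m\partial_t w$ — splitting $(0,\infty)=(0,1)\cup(1,\infty)$ and using on $(0,1)$ the primitive $V_0(t)=\int_0^t\nabla^m\mathcal{E}\varphi(\,\cdot\,,s)\,ds$ (vanishing at $t=0$) and on $(1,\infty)$ the primitive $V_\infty(t)=-\int_t^\infty\nabla^m\mathcal{E}\varphi(\,\cdot\,,s)\,ds$ (vanishing at $t=\infty$). The endpoint contributions at $0$ and at $\infty$ vanish because the relevant primitive vanishes there while $\nabla^m w(\,\cdot\,,t)$ stays bounded, respectively tends to $0$; the two endpoint contributions at $t=1$ combine into $\langle\mat A\nabla^m w(\,\cdot\,,1),\int_0^\infty\nabla^m\mathcal{E}\varphi(\,\cdot\,,s)\,ds\rangle_{\R^n}$, bounded by $\Lambda\doublebar{\nabla^m w(\,\cdot\,,1)}_{L^2}\cdot C\doublebar{\Tr_{m-1}\varphi}_{L^2}\le CM\doublebar{\Tr_{m-1}\varphi}_{L^2}$. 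Each remaining bulk term is of the form $\int\langle\mat A\nabla^m\partial_t w(\,\cdot\,,t),V_\#(t)\rangle_{\R^n}\,dt$, which Cauchy–Schwarz in $t$ with the weight $t$ bounds by $\Lambda M\bigl(\int\doublebar{V_\#(t)}_{L^2(\R^n)}^2\,\tfrac{dt}{t}\bigr)^{1/2}$, and the quadratic estimates for the primitives supply the factor $C\doublebar{\Tr_{m-1}\varphi}_{L^2}$. (A cleaner bookkeeping: a single integration by parts in $t$ applied to $\langle\nabla^m\varphi,\mat A\nabla^m w\rangle_{\R^\dmn_+}$ isolates the term $-\langle\Tr_{m-1}\varphi,\arr h\rangle_{\R^n}$ with $h_\gamma=(\mat A\arr f)_{\gamma+\vec e_\dmn}$ and $\doublebar{\arr h}_{L^2}\le\Lambda\doublebar{\arr f}_{L^2}\le CM$, plus a residual that depends only on $\Tr_{m-1}\varphi$, which one then handles by a suitable choice of extension and the same square‑function argument.)

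The main obstacle is the pair of quadratic estimates $\int_0^\infty\doublebar{V_\#(t)}_{L^2(\R^n)}^2\,\tfrac{dt}{t}\le C\doublebar{\Tr_{m-1}\varphi}_{L^2(\R^n)}^2$ for the $t$‑primitives of $\nabla^m\mathcal{E}\varphi$, which demand that these primitives decay suitably both as $t\to0^+$ and as $t\to\infty$ with a constant controlled by $\doublebar{\Tr_{m-1}\varphi}_{L^2}$ alone — in particular \emph{not} by $\diam\supp\varphi$. Since $\nabla^m\mathcal{E}\varphi$ sits one derivative above $\Tr_{m-1}\varphi$, its $t$‑primitive carries exactly borderline regularity, and the delicate point is to organize the integration by parts so that the $\mathcal{Q}_t^m$‑type pieces (whose naive $L^2(\tfrac{dt}{t})$‑integrals diverge logarithmically — at high frequency near $t=0$ and at low frequency near $t=\infty$) are absorbed into the endpoint terms at $t=1$ and into the $\langle\Tr_{m-1}\varphi,\arr h\rangle$ term, leaving in the bulk integrals only genuine Littlewood–Paley multipliers $\theta(t\xi)$ (bounded, with $\int_0^\infty\abs{\theta(s\omega)}^2\tfrac{ds}{s}$ finite). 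Tracking the vanishing of $\partial_t^\ell\mathcal{Q}_t^m$ at $\xi=0$ for $\ell\ge1$ is what makes this accounting go through.
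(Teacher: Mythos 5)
The paper does not actually prove this theorem: it is quoted from \cite{BarHM17pB} (Theorem~\ref*{B:thm:Neumann:2} there), which is listed as in preparation, so there is no in-paper argument to compare against. Your strategy --- integrate by parts in $t$ against a primitive of $\nabla^m\mathcal{E}\varphi$, control the endpoint terms via the $L^2$-traces supplied by Theorem~\ref{C:thm:Dirichlet:2}, and estimate the bulk by Cauchy--Schwarz with weight $t$ against the square function --- is the natural one, and your proof of the absolute-convergence statement is complete and correct.

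The quantitative bound, however, has a real gap that you flag but do not close. The quadratic estimate $\int_0^\infty\doublebar{V_\#(t)}_{L^2(\R^n)}^2\,\tfrac{dt}{t}\le C\doublebar{\Tr_{m-1}\varphi}_{L^2(\R^n)}^2$, on which your bulk estimate rests, is \emph{false} for both primitives. Writing (up to bounded directional factors) $\widehat{(V_\#(t))_\alpha}(\xi)=\sum_k\abs{\xi}^{m-1-k}\theta_{\alpha,k}(t\abs\xi)\widehat\varphi_k(\xi)$, each $\alpha$ produces exactly one multiplier $\theta$ not vanishing at $s=0$: for $\alpha_\dmn=j\ge 1$ and $k=j-1$ it is $\theta(s)=\tfrac{1}{(j-1)!}\tfrac{d^{j-1}}{ds^{j-1}}\bigl(s^{j-1}e^{-(2\pi s)^{2m}}\bigr)$, while for $\alpha_\dmn=0$ it is $\theta(s)=\int_s^\infty u^k e^{-(2\pi u)^{2m}}\,du$. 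Hence $\int_0^\infty\abs{\theta(t\abs\xi)}^2\,\tfrac{dt}{t}$ diverges logarithmically, and splitting at $t=1$ only relocates the divergence (to $\abs\xi\to\infty$ on $(0,1)$ and $\abs\xi\to0$ on $(1,\infty)$). The fix you sketch --- subtracting a constant so that the $\theta(0)$-piece becomes a copy of $\Tr_{m-1}\varphi$ pairing with $\int_0^\infty\nabla^m\partial_t w\,dt=-\arr f\in L^2(\R^n)$ --- does dispose of the $\alpha_\dmn\ge 1$ terms, but for $\alpha_\dmn=0$ it does not: subtracting $\theta(0)$ leaves $-\int_0^s u^k e^{-(2\pi u)^{2m}}\,du$, which still fails to vanish at $s=\infty$, so the residual multiplier is not of Littlewood--Paley type. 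Your alternative ``cleaner bookkeeping'' with the bare test function $\varphi$ also does not reach the $\alpha_\dmn=0$ components, since those involve an $m$-th horizontal derivative of $\varphi$, which is not controlled by $\doublebar{\Tr_{m-1}\varphi}_{L^2(\R^n)}$ and admits no integration by parts in $t$. A genuinely different device for these borderline terms --- for instance, transposing the self-adjoint multiplier $\theta(t\abs\xi)$ onto $\mat A\nabla^m\partial_t w$ and integrating by parts once more, or a square-function duality argument --- is needed and is missing from the proposal.
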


\section{More on boundary values}
\label{C:sec:boundary}

In this section we will provide one more result for the Neumann boundary values of solutions; we will then combine the bounds on layer potentials (Theorems~\ref{C:thm:square} and~\ref{C:thm:square:rough}) with the trace results of Section~\ref{C:sec:trace} to bound the Dirichlet and Neumann boundary values of layer potentials. We remark in particular that some extra analysis is necessary to dispense with the functions  $\arr p$ of Theorem~\ref{C:thm:Dirichlet:2}. Finally, we will generalize the Green's formulas \eqref{C:eqn:green} and~\eqref{C:eqn:green:lower} from solutions in $\dot W^2_m(\R^\dmn_\pm)$ to solutions that satisfy square-function estimates as in Theorems~\ref{C:thm:well-posed:2} and~\ref{C:thm:well-posed:1}.

\subsection{Limits of Neumann boundary values}
\label{C:sec:neumann:limit}

If $\mat A$ is $t$-independent and $Lu=0$, then $Lu_\sigma=0$ as well, where $u_\sigma(x,t)=u(x,t+\sigma)$. It is often useful to analyze $u$ by analyzing $u_\sigma$ and taking a limit as $\sigma\to 0$; for example, if $u$ satisfies the conditions of Theorem~\ref{C:thm:Dirichlet:1} and $\sigma>0$, then $u_\sigma\in \dot W^2_m(\R^\dmn_+)$ and so the Green's formula~\eqref{C:eqn:green} and the weak formulation of Neumann boundary values~\eqref{C:eqn:Neumann:W2} are valid.

Theorems~\ref{C:thm:Dirichlet:1} and~\ref{C:thm:Dirichlet:2} establish uniform bounds on $\Tr_{m-1}^+ u_\sigma$ and show that $\Tr_{m-1}^+ u_\sigma\to \Tr_{m-1}^+ u$ as $\sigma\to 0^+$. Theorems~\ref{C:thm:Neumann:1} and~\ref{C:thm:Neumann:2}, by contrast, bound $\M_{\mat A}^+ u$ alone. While it is clear that if $u$ satisfies the conditions of Theorem~\ref{C:thm:Neumann:1} or~\ref{C:thm:Neumann:2}, then so does~$u_\sigma$, it is not clear that $\M_{\mat A}^+ u_\sigma \to \M_{\mat A}^+ u$; establishing this limit is the goal of this section.

As in Section~\ref{C:sec:trace}, similar results are valid in the lower half-space.


\begin{lem}\label{C:lem:Neumann:limit:1} 
Let $L$ and $v$ be as in Theorem~\ref{C:thm:Neumann:1} (that is, as in Theorems~\ref{C:thm:square} and~\ref{C:thm:Dirichlet:1}).
Let $v_\sigma(x,t)=v(x,t+\sigma)$.

Then 
$\M_{\mat A}^+ v_\varepsilon\to \M_{\mat A}^+ v$ in $L^2(\R^n)$ as $\varepsilon\to 0^+$, and 
$\M_{\mat A}^+ v_T\to 0$ in $L^2(\R^n)$ as $T\to\infty$.
\end{lem}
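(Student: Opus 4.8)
The plan is to deduce the lemma by applying the trace theorems of Section~\ref{C:sec:trace} not to $v$ but to the translated solutions $v_\sigma$, $\sigma>0$, which lie in $\dot W^2_m(\R^\dmn_+)$ and are much better behaved near the boundary. If $\sigma>0$, a change of variables gives $\int_{\R^n}\int_0^\infty\abs{\nabla^m v_\sigma}^2\,dt\,dx=\int_{\R^n}\int_\sigma^\infty\abs{\nabla^m v}^2\,dt\,dx\le\sigma^{-1}\int_{\R^n}\int_0^\infty\abs{\nabla^m v}^2\,t\,dt\,dx<\infty$, so $v_\sigma\in\dot W^2_m(\R^\dmn_+)$ with $Lv_\sigma=0$ there. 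Combining the Caccioppoli inequality (Lemma~\ref{C:lem:Caccioppoli}) with the slice estimate for $t$-independent operators (Lemma~\ref{C:lem:slices}), applied on Whitney balls of radius comparable to the height, yields $\doublebar{\nabla^m\partial_t v(\,\cdot\,,t)}_{L^2(\R^n)}^2\le Ct^{-3}\int_{\R^n}\int_{t/2}^{2t}\abs{\nabla^m v(x,s)}^2\,ds\,dx$; integrating against $t\,dt$ over $(\sigma,\infty)$ and using Fubini shows $\int_{\R^n}\int_0^\infty\abs{\nabla^m\partial_t v_\sigma}^2\,t\,dt\,dx<\infty$. Since moreover $\nabla^m v_\sigma(\,\cdot\,,t)=\nabla^m v(\,\cdot\,,t+\sigma)$ lies in $L^2(\R^n)$ for a.e.\ $t>0$, the translate $v_\sigma$ satisfies the hypotheses of Theorems~\ref{C:thm:Dirichlet:2} and~\ref{C:thm:Neumann:2} with $\arr p=0$. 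Hence $\M_{\mat A}^+ v_\sigma$ is represented by a genuine $L^2(\R^n)$ array, is given by the weak formula~\eqref{C:eqn:Neumann:W2}, and satisfies $\doublebar{\M_{\mat A}^+ v_\sigma}_{L^2(\R^n)}\le C\bigl(\int_{\R^n}\int_0^\infty\abs{\nabla^m\partial_t v_\sigma}^2\,t\,dt\,dx\bigr)^{1/2}$; the same applies to $v_T$ for each $T>0$.

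The heart of the argument is a Cauchy estimate in $L^2(\R^n)$ for $\{\M_{\mat A}^+ v_\sigma\}_{\sigma>0}$ as $\sigma\to0^+$. For $0<\varepsilon<\sigma$ the difference $w=v_\varepsilon-v_\sigma$ solves $Lw=0$ in $\R^\dmn_+$ and again satisfies the hypotheses of Theorem~\ref{C:thm:Neumann:2} (with $\arr p=0$), and by linearity of~\eqref{C:eqn:Neumann:W2} we have $\M_{\mat A}^+ v_\varepsilon-\M_{\mat A}^+ v_\sigma=\M_{\mat A}^+ w$; thus Theorem~\ref{C:thm:Neumann:2} applied to $w$ gives
\[\doublebar{\M_{\mat A}^+ v_\varepsilon-\M_{\mat A}^+ v_\sigma}_{L^2(\R^n)}^2\le C\int_{\R^n}\int_0^\infty\abs[big]{\nabla^m\partial_t v(x,t+\varepsilon)-\nabla^m\partial_t v(x,t+\sigma)}^2\,t\,dt\,dx.\]
I would show the right-hand side tends to $0$ by splitting the $t$-integral at $t=1$: on $\{t>1\}$ the function $\nabla^m\partial_t v$ is square-integrable against $t\,dt$ (by the slice estimate above and $\int_{\R^n}\int_0^\infty\abs{\nabla^m v}^2\,t<\infty$), so that contribution vanishes in the limit by continuity of translation by nonnegative amounts on $L^2(\{t>1/2\},t\,dt\,dx)$; the region $\{0<t<1\}$ is reduced, again via the slice and Caccioppoli estimates, to a weighted integral of $\abs{\nabla^m v}^2$ over a shrinking strip at the boundary, combined with the cancellation in the difference. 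This yields that $\{\M_{\mat A}^+ v_\sigma\}$ is Cauchy in $L^2(\R^n)$; write $\arr h$ for its limit. The step I expect to be the main obstacle is precisely this boundary portion $\int_{\R^n}\int_0^1\abs{\nabla^m\partial_t v(x,t+\varepsilon)-\nabla^m\partial_t v(x,t+\sigma)}^2\,t\,dt\,dx\to0$: there $\nabla^m\partial_t v$ need not be square-integrable against $t\,dt$, so dominated convergence is unavailable and one must extract the gain in the slice/Caccioppoli estimate and exploit the cancellation carefully before invoking translation continuity.

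For the behaviour as $\sigma\to\infty$, the slice estimate and a change of variables give directly $\doublebar{\M_{\mat A}^+ v_T}_{L^2(\R^n)}^2\le C\int_{\R^n}\int_T^\infty\abs{\nabla^m\partial_t v(x,s)}^2\,s\,ds\,dx\le CT^{-2}\int_{\R^n}\int_0^\infty\abs{\nabla^m v(x,t)}^2\,t\,dt\,dx$, which tends to $0$ as $T\to\infty$.

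It remains to identify $\arr h$ with $\M_{\mat A}^+ v$. By continuity of translation (by nonnegative amounts) on $L^2(\R^\dmn_+,t\,dt\,dx)$ — valid since $s\mapsto\nabla^m v(\,\cdot\,,s)$ is continuous into $L^2(\R^n)$ on $(0,\infty)$ by interior regularity — we have $\int_{\R^n}\int_0^\infty\abs{\nabla^m(v-v_\sigma)}^2\,t\,dt\,dx\to0$ as $\sigma\to0^+$, so Theorem~\ref{C:thm:Neumann:1} applied to $v-v_\sigma$ shows $\M_{\mat A}^+ v_\sigma\to\M_{\mat A}^+ v$ in $\dot W^2_{-1}(\R^n)$. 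Testing the two convergences against an arbitrary $\Tr_{m-1}\varphi\in\mathfrak{D}$, which lies both in $L^2(\R^n)$ and in $\dot W\!A^2_{m-1,1}(\R^n)$, gives $\langle\arr h,\Tr_{m-1}\varphi\rangle_{\R^n}=\langle\M_{\mat A}^+ v,\Tr_{m-1}\varphi\rangle_{\R^n}$ for all $\varphi\in C^\infty_0(\R^\dmn)$; hence $\arr h$ is a representative of $\M_{\mat A}^+ v$, so $\M_{\mat A}^+ v_\varepsilon\to\M_{\mat A}^+ v$ in $L^2(\R^n)$ as $\varepsilon\to0^+$, completing the argument.
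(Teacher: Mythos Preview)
Your approach is considerably more elaborate than the paper's, largely because you take the phrase ``in $L^2(\R^n)$'' in the lemma statement at face value. In fact the paper's own proof establishes only convergence in $(\dot W\!A^2_{m-1,1}(\R^n))^*$ (equivalently $\dot W^2_{-1}(\R^n)$), which is all that Theorem~\ref{C:thm:Neumann:1} yields and all that is needed downstream (in Theorem~\ref{C:thm:green} one feeds $\M_{\mat A}^+v_\sigma$ into the single layer potential via the bound~\eqref{C:eqn:S:square:variant}, which takes $\dot W^2_{-1}$ data). The paper simply shows that
\[\int_{\R^n}\int_0^\infty\abs{\nabla^m v_T}^2\,t\,dt\,dx\to 0\quad\text{and}\quad\int_{\R^n}\int_0^\infty\abs{\nabla^m(v_\varepsilon-v)}^2\,t\,dt\,dx\to 0,\]
then invokes Theorem~\ref{C:thm:Neumann:1}. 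The second limit is handled by splitting at $t=\sqrt\varepsilon$: for $t<\sqrt\varepsilon$ one bounds $\abs{\nabla^m v_\varepsilon}^2$ and $\abs{\nabla^m v}^2$ separately by tails of the square function, and for $t>\sqrt\varepsilon$ one writes $\nabla^m v_\varepsilon-\nabla^m v=\int_t^{t+\varepsilon}\nabla^m\partial_s v\,ds$ and uses Caccioppoli on cubes of side $\sim\sqrt\varepsilon$ to trade the extra derivative for a factor of $\varepsilon^{-1}$. You essentially sketch exactly this in your final paragraph (identifying the limit via Theorem~\ref{C:thm:Neumann:1} applied to $v-v_\sigma$), so the paper's entire proof is already contained in your step~6.

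The obstacle you flag in the Cauchy step is genuine and not merely technical: under the hypotheses of Theorem~\ref{C:thm:Dirichlet:1} alone there is no reason for $\int_{\R^n}\int_0^\infty\abs{\nabla^m\partial_t v}^2\,t\,dt\,dx$ to be finite, and indeed $\M_{\mat A}^+ v$ need not lie in $L^2(\R^n)$ at all --- Theorem~\ref{C:thm:Neumann:1} only places it in $(\dot W\!A^2_{m-1,1}(\R^n))^*$. So your detour through Theorem~\ref{C:thm:Neumann:2} is chasing a statement stronger than what is true in general. Once ``$L^2(\R^n)$'' is read as ``$(\dot W\!A^2_{m-1,1}(\R^n))^*$'', the whole proof collapses to your last paragraph, and your steps~1--5 (including the $T\to\infty$ estimate via Theorem~\ref{C:thm:Neumann:2}, which is correct but overkill) become unnecessary.
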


\begin{proof}
First,
\begin{equation*}\int_{\R^n}\int_0^\infty \abs{\nabla^m v_T(x,t)}^2t\,dt\,dx
=
\int_{\R^n}\int_T^\infty \abs{\nabla^m v(x,t)}^2(t-T)\,dt\,dx \end{equation*}
which approaches zero as $T\to\infty$, and so by Theorem~\ref{C:thm:Neumann:1}, $\M_{\mat A}^+ v_T\to 0$ in $L^2(\R^n)$ as $T\to\infty$.

We now turn to the limit $\M_{\mat A}^+ v_\varepsilon\to \M_{\mat A}^+ v$. It suffices to show that 
\begin{equation*}\lim_{\varepsilon\to 0^+} 
\int_{\R^n}\int_0^\infty \abs{\nabla^m v_\varepsilon(x,t)-\nabla^m v(x,t)}^2\,t\,dt\,dx= 0
.\end{equation*}
But
\begin{multline*}
\int_0^\infty\int_{\R^n}
	\abs{\nabla^m v_\varepsilon(x,t)-\nabla^m v(x,t)}^2\,t\,dx\,dt
\\\begin{aligned}
&\leq
	2\int_0^{\sqrt{\varepsilon}}\int_{\R^n}
	\abs{\nabla^m v_\varepsilon(x,t)}^2\,t\,dx\,dt
	+
	2\int_0^{\sqrt{\varepsilon}}\int_{\R^n}
	\abs{\nabla^m v(x,t)}^2\,t\,dx\,dt
	\\&\qquad+
	\int_{\sqrt{\varepsilon}}^\infty\int_{\R^n}
	\abs{\nabla^m v_\varepsilon(x,t)-\nabla^m v(x,t)}^2\,t\,dx\,dt
.\end{aligned}\end{multline*}
Recalling the definition of $v_\varepsilon$, the first two integrals on the right-hand side may be bounded by 
\begin{equation*}2\int_\varepsilon^{\varepsilon+\sqrt{\varepsilon}}\int_{\R^n}
	\abs{\nabla^m v(x,t)}^2\,(t-\varepsilon)\,dx\,dt
	+
	2\int_0^{\sqrt{\varepsilon}}\int_{\R^n}
	\abs{\nabla^m v(x,t)}^2\,t\,dx\,dt\end{equation*}
which approaches zero as $\varepsilon\to \infty$.

The final integral is at most
\begin{align*}
	\int_{\sqrt{\varepsilon}}^\infty\int_{\R^n}
	\abs[bigg]{\int_t^{t+\varepsilon}\nabla^m \partial_s v(x,s)\,ds}^2\,t\,dx\,dt
&\leq
	\varepsilon\int_{\sqrt{\varepsilon}}^\infty\int_{\R^n}
	\int_t^{t+\varepsilon}\abs{\nabla^m \partial_s v(x,s)}^2\,ds\,t\,dx\,dt
\\&\leq
	\varepsilon^2\int_{\R^n}\int_{\sqrt{\varepsilon}}^\infty
	\abs{\nabla^m \partial_s v(x,s)}^2\,s\,ds\,dx
.\end{align*}
Applying the Caccioppoli inequality in cubes of side-length $\sqrt{\varepsilon}/C$, we see that
\begin{align*}
	\int_{\sqrt{\varepsilon}}^\infty\int_{\R^n}
	\abs[bigg]{\int_t^{t+\varepsilon}\nabla^m \partial_s v(x,s)\,ds}^2\,t\,dx\,dt
&\leq
	C\varepsilon\int_{\R^n}\int_{\sqrt{\varepsilon}/2}^\infty
	\abs{\nabla^m v(x,s)}^2\,s\,ds\,dx
\end{align*}
which, again, approaches zero as $\varepsilon\to 0^+$.
\end{proof}

\begin{lem}\label{C:lem:Neumann:limit:2} 
Let $L$ and $w$ be as in Theorem~\ref{C:thm:Neumann:2}. 
Let $w_\sigma(x,t)=w(x,t+\sigma)$.

Then 
$\M_{\mat A}^+ w_\varepsilon\to \M_{\mat A}^+ w$ as $\varepsilon\to 0^+$ in $L^2(\R^n)$, and $M_{\mat A}^+ w_T\to 0$ as $T\to \infty$.
\end{lem}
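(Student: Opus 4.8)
The plan is to reproduce the proof of Lemma~\ref{C:lem:Neumann:limit:1}, with the solution $\partial_t w$ in the role of $v$ and with Theorem~\ref{C:thm:Neumann:2} in the role of Theorem~\ref{C:thm:Neumann:1}. First I would record the properties of the shifts $w_\sigma$ that make the reduction work. Since $\mat A$ is $t$-independent, $Lw_\sigma=0$ in $\R^\dmn_+$ for every $\sigma\geq0$; the change of variables $s=t+\sigma$ gives $\int_{\R^n}\int_0^\infty\abs{\nabla^m\partial_t w_\sigma(x,t)}^2\,t\,dt\,dx\leq\int_{\R^n}\int_0^\infty\abs{\nabla^m\partial_s w(x,s)}^2\,s\,ds\,dx<\infty$; and $\nabla^m w_\sigma(\,\cdot\,,t)=\nabla^m w(\,\cdot\,,t+\sigma)$ lies in $L^2(\R^n)$ for every $t>0$, because the hypothesis $\nabla^m w(\,\cdot\,,t)\in L^2(\R^n)$ for some $t>0$ forces $\arr p=0$ in Theorem~\ref{C:thm:Dirichlet:2}, hence $\sup_{t>0}\doublebar{\nabla^m w(\,\cdot\,,t)}_{L^2(\R^n)}<\infty$. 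Thus $w_\varepsilon-w$ (for $\varepsilon>0$) and $w_T$ (for $T>0$) again satisfy the hypotheses of Theorem~\ref{C:thm:Neumann:2} (the square-function bound being inherited via the triangle inequality in $L^2(t\,dt\,dx)$), and on this class of solutions $\M_{\mat A}^+$ is linear and the integral in formula~\eqref{C:eqn:Neumann:E} converges absolutely. That theorem therefore yields
\[
\doublebar{\M_{\mat A}^+ w_\varepsilon-\M_{\mat A}^+ w}_{(\dot W\!A^2_{m-1,0}(\R^n))^*}\leq C\Bigl(\int_{\R^n}\int_0^\infty\abs{\nabla^m\partial_t(w_\varepsilon-w)(x,t)}^2\,t\,dt\,dx\Bigr)^{1/2}
\]
and the analogous bound for $\M_{\mat A}^+ w_T$, so it suffices to show that these two square functions tend to $0$.

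The limit $T\to\infty$ is immediate: after $s=t+T$ the square function of $w_T$ equals $\int_{\R^n}\int_T^\infty\abs{\nabla^m\partial_s w(x,s)}^2\,(s-T)\,ds\,dx\leq\int_{\R^n}\int_T^\infty\abs{\nabla^m\partial_s w(x,s)}^2\,s\,ds\,dx$, the tail of a convergent integral. For $\varepsilon\to0^+$ I would write $\nabla^m\partial_t(w_\varepsilon-w)(x,t)=\nabla^m\partial_t w(x,t+\varepsilon)-\nabla^m\partial_t w(x,t)$ and split the $t$-integral at $t=\sqrt\varepsilon$, exactly as in Lemma~\ref{C:lem:Neumann:limit:1}. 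Over $0<t<\sqrt\varepsilon$, after the substitution $s=t+\varepsilon$ the contribution is at most $\int_{\R^n}\int_{\{0<s<2\sqrt\varepsilon\}}\abs{\nabla^m\partial_s w}^2\,s\,ds\,dx+\int_{\R^n}\int_0^{\sqrt\varepsilon}\abs{\nabla^m\partial_s w}^2\,s\,ds\,dx$, which tends to $0$ by absolute continuity of the finite integral $\int_{\R^n}\int_0^\infty\abs{\nabla^m\partial_t w}^2\,t\,dt\,dx$. Over $t>\sqrt\varepsilon$, writing the difference as $\int_t^{t+\varepsilon}\nabla^m\partial_s^2 w(x,s)\,ds$ and then applying Cauchy--Schwarz in $s$ and Fubini bounds the contribution by $\varepsilon^2\int_{\R^n}\int_{\sqrt\varepsilon}^\infty\abs{\nabla^m\partial_s^2 w(x,s)}^2\,s\,ds\,dx$; applying the Caccioppoli inequality (Lemma~\ref{C:lem:Caccioppoli}) to the solution $\partial_t w$ on balls of radius $\sim\sqrt\varepsilon$, summed over a bounded-overlap covering of $\R^n\times(\sqrt\varepsilon,\infty)$ --- as in the proof of Lemma~\ref{C:lem:Neumann:limit:1} --- improves this to $C\varepsilon\int_{\R^n}\int_{\sqrt\varepsilon/2}^\infty\abs{\nabla^m\partial_s w(x,s)}^2\,s\,ds\,dx\leq C\varepsilon\int_{\R^n}\int_0^\infty\abs{\nabla^m\partial_t w}^2\,t\,dt\,dx$, which tends to $0$.

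I do not expect a real obstacle: once the reduction of the first paragraph is in place, the rest is a line-for-line repetition of Lemma~\ref{C:lem:Neumann:limit:1} with $v$ replaced by $\partial_t w$. The only delicate point is that reduction itself --- checking that $w_\varepsilon-w$ and $w_T$ inherit every hypothesis of Theorem~\ref{C:thm:Neumann:2}, in particular the condition $\nabla^m(\,\cdot\,)(\,\cdot\,,t)\in L^2(\R^n)$ that kills the limiting polynomial $\arr p$ --- and recording that Theorem~\ref{C:thm:Neumann:2} controls $\M_{\mat A}^+$ as a functional on $\dot W\!A^2_{m-1,0}(\R^n)$, which is the sense in which the ``$L^2(\R^n)$'' convergence asserted in the statement should be understood (compare Lemma~\ref{C:lem:Neumann:limit:1} and the discussion of $\M_{\mat A}u$ as an equivalence class in the introduction).
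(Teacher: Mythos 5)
Your proposal matches the paper's proof: the paper likewise reduces the statement to the two square-function limits for $w_\varepsilon-w$ and $w_T$, obtains them ``arguing as in the proof of Lemma~\ref{C:lem:Neumann:limit:1}'' (i.e.\ the same split at $t=\sqrt\varepsilon$ and Caccioppoli applied to $\partial_t w$), and then concludes via Theorem~\ref{C:thm:Neumann:2}. Your write-up is merely a more explicit version of the paper's terse proof, and the care you take in checking that $w_\varepsilon-w$ and $w_T$ inherit the hypotheses of Theorem~\ref{C:thm:Neumann:2} (in particular the vanishing of $\arr p$) is exactly the point the paper's ``Clearly $\nabla^m w_\sigma(\,\cdot\,,t)\in L^2(\R^n)$'' is meant to cover.
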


\begin{proof} Clearly $\nabla^m w_\sigma(\,\cdot\,,t)\in L^2(\R^n)$ for every $t>0$.
Arguing as in the proof of Lemma~\ref{C:lem:Neumann:limit:1}, we have that
\begin{gather*}\lim_{\varepsilon\to 0^+} 
\int_{\R^n}\int_0^\infty \abs{\nabla^m \partial_t w_\varepsilon(x,t)-\nabla^m \partial_t w(x,t)}^2\,t\,dt\,dx= 0,
\\
\lim_{T\to\infty} \int_{\R^n}\int_0^\infty \abs{\nabla^m \partial_t w_T(x,t)}^2\,t\,dt\,dx=0
.\end{gather*}
and by Theorem~\ref{C:thm:Neumann:2} the proof is complete.
\end{proof}

\subsection{Dirichlet boundary values of layer potentials}
\label{C:sec:Dirichlet:layer}

Recall the bounds on layer potentials of Section~\ref{C:sec:potentials:bounds}.
By Theorem~\ref{C:thm:square:rough}, we have that if $\arr g\in {\dot W_{-1}^2(\R^n)}$ and if $\arr f\in \dot W\!A^2_{m-1,0}(\R^n)\subset L^2(\R^n)$, then $v=\s^L\arr g$ or $v=\D^{\mat A}\arr f$ satisfies the conditions of Theorem~\ref{C:thm:Dirichlet:1}; thus, there exist polynomials $P_g$ and $P_f$ such that
\begin{align*}
\sup_{t\neq 0} \doublebar{\nabla^{m-1} \s^{L} \arr g(\,\cdot\,,t)-\nabla^{m-1}P_g}_{L^2(\R^n)} &\leq C \doublebar{\arr g}_{\dot W_{-1}^2(\R^n)}
,\\
\sup_{t\neq 0} \doublebar{\nabla^{m-1} \D^{\mat A} \arr f(\,\cdot\,,t)-\nabla^{m-1}P_f}_{L^2(\R^n)} &\leq C \doublebar{\arr f}_{L^2(\R^n)}
= C\doublebar{\arr f}_{\dot W\!A^2_{m-1,0}(\R^n)}
.\end{align*}
Recall that $\D^{\mat A}$ and $\s^L$ were originally defined as operators from $\dot W\!A^2_{m-1,1/2}(\R^n)$ and its dual space to $\dot W^2_m(\R^\dmn_\pm)$, a space defined modulo polynomials. By Theorem~\ref{C:thm:square:rough}, we may extend $\D^{\mat A}$ and $\s^L$ to operators on $\dot W\!A^2_{m-1,0}(\R^n)$ and $\dot W^2_{-1}(\R^n)$; however, we again have that $\D^{\mat A} \arr f$ and $\s^L\arr g$ are only locally Sobolev functions, that is, are defined only up to adding polynomials of degree $m-1$. We adopt the convention that the polynomials $P_g$ and $P_f$ are of degree $m-2$; that is, we normalize $u=\D^{\mat A} \arr f$ and $u=\s^L\arr g$ so that $\nabla^{m-1}u(\,\cdot\,,t)\to 0$ as $t\to\infty$.
Thus, we have the bounds
\begin{align}
\label{C:eqn:S:L2}
\sup_{t\neq 0} \doublebar{\nabla^{m-1} \s^{L} \arr g(\,\cdot\,,t)}_{L^2(\R^n)} &\leq C \doublebar{\arr g}_{\dot W_{-1}^2(\R^n)}
,\\
\label{C:eqn:D:L2}
\sup_{t\neq 0} \doublebar{\nabla^{m-1} \D^{\mat A} \arr f(\,\cdot\,,t)}_{L^2(\R^n)} &\leq C \doublebar{\arr f}_{L^2(\R^n)}
= C\doublebar{\arr f}_{\dot W\!A^2_{m-1,0}(\R^n)}
.\end{align}
Furthermore, for such $\arr f$ and $\arr g$, $\nabla^{m-1} \s^{L} \arr g(\,\cdot\,,t)$ and ${\nabla^{m-1} \D^{\mat A} \arr f(\,\cdot\,,t)}$ approach zero in $L^2(\R^n)$ as $t\to\pm \infty$, and approach (usually nonzero) limits in $L^2(\R^n)$ as $t\to 0^\pm$; that is, $\Tr_{m-1}^\pm \D^{\mat A}$ and $\Tr_{m-1}^\pm\s^L$ are bounded operators $\dot W\!A^2_{m-1,0}(\R^n)\mapsto \dot W\!A^2_{m-1,0}(\R^n)$ and $\dot W^2_{-1}(\R^n)\mapsto \dot W\!A^2_{m-1,0}(\R^n)$.

We remark that if $\D^{\mat A}$ and $\s^L$ are defined using the fundamental solution, as in \cite{BarHM17pA}, then this naturalization condition follows from the normalization conditions of the fundamental solution; see \cite[Remark~\ref*{A:rmk:potentials:decay}]{BarHM17pA}.

We now turn to the bounds given by Theorem~\ref{C:thm:square} rather than Theorem~\ref{C:thm:square:rough}.
By Theorem~\ref{C:thm:square}, we have that if $\arr g\in L^2(\R^n)$ and if $\arr f\in \dot W\!A^2_{m-1,1}(\R^n)\subset \dot W^2_1(\R^n)$, then $w=\s^L\arr g$ or $w=\D^{\mat A}\arr f$ satisfies the conditions of Theorem~\ref{C:thm:Dirichlet:2}; thus, there exist constants $\arr p_g$ and $\arr p_f$ such that
\begin{align*}
\sup_{t\neq 0} \doublebar{\nabla^{m} \s^{L} \arr g(\,\cdot\,,t)-\arr p_g}_{L^2(\R^n)} &\leq C \doublebar{\arr g}_{L^2(\R^n)}
,\\
\sup_{t\neq 0} \doublebar{\nabla^{m} \D^{\mat A} \arr f(\,\cdot\,,t)-\arr p_f}_{L^2(\R^n)} &\leq C \doublebar{\arr f}_{\dot W^2_1(\R^n)}
= C\doublebar{\arr f}_{\dot W\!A^2_{m-1,1}(\R^n)}.
.\end{align*}
But recall that $\D^{\mat A}$ is bounded ${\dot W\!A^2_{m-1,1/2}(\R^n)}\mapsto \dot W^2_m(\R^\dmn_+)$, and $\s^L$ is bounded $({\dot W\!A^2_{m-1,1/2}(\R^n)})^*\mapsto \dot W^2_m(\R^\dmn_+)$. If $Lw=0$ for some $w\in \dot W^2_m(\R^\dmn_+)$, then 
by Lemma~\ref{C:lem:slices} applied in cubes of side-length $t/2$ we have that $\doublebar{\nabla^m w(\,\cdot\,,t)}_{L^2(\R^n)}^2 \leq (C/t)\doublebar{\nabla^m w}_{L^2(\R^\dmn_+)}^2$. In particular $\doublebar{\nabla^m w(\,\cdot\,,t)}_{L^2(\R^n)}$ is finite for all $t>0$. Thus, $\arr p_f=0$ and $\arr p_g=0$ for $\arr f\in {\dot W\!A^2_{m-1,1/2}(\R^n)}\cap {\dot W\!A^2_{m-1,1}(\R^n)}$ and~$\arr g\in ({\dot W\!A^2_{m-1,1/2}(\R^n)})^*\cap L^2(\R^n)$.

By density, for all $\arr g\in L^2(\R^n)$, $\arr f\in \dot W\!A^2_{m-1,1}(\R^n)$, we have that
\begin{align}
\label{C:eqn:S:L2:smooth}
\sup_{t\neq 0} \doublebar{\nabla^m \s^{L}\arr g(\,\cdot\,,t)}_{L^2(\R^n)} &\leq C\doublebar{\arr g}_{L^2(\R^n)}
,\\
\label{C:eqn:D:L2:smooth}
\sup_{t\neq 0} \doublebar{\nabla^m \D^{\mat A}\arr f(\,\cdot\,,t)}_{L^2(\R^n)} &\leq C\doublebar{\nabla_\pureH\arr f}_{L^2(\R^n)}=C\doublebar{\arr f}_{\dot W\!A^2_{m-1,1}(\R^n)}
.\end{align}
Furthermore, for such $\arr f$ and $\arr g$, $\nabla^{m} \s^{L} \arr g(\,\cdot\,,t)$ and ${\nabla^{m} \D^{\mat A} \arr f(\,\cdot\,,t)}$ approach zero in $L^2(\R^n)$ as $t\to\pm \infty$, and approach (usually nonzero) limits in $L^2(\R^n)$ as $t\to 0^\pm$; that is, $\Tr_{m-1}^\pm \D^{\mat A}$ and $\Tr_{m-1}^\pm\s^L$ are bounded operators $\dot W\!A^2_{m-1,1}(\R^n)\mapsto \dot W\!A^2_{m-1,1}(\R^n)$ and $L^2(\R^n)\mapsto \dot W\!A^2_{m-1,1}(\R^n)$.

\subsection{Neumann boundary values of layer potentials}\label{C:sec:Neumann:layer}
The case of Neumann boundary values is somewhat simpler.
By Theorems~\ref{C:thm:square:rough} and \ref{C:thm:Neumann:1}, we have that 
\begin{align}
\label{C:eqn:Neumann:S:1}
\doublebar{\M_{\mat A} \s^{L}\arr g}_{(\dot W\!A^2_{m-1,1}(\R^n))^*}
&\leq C\doublebar{\arr g}_{\dot W_{-1}^2(\R^n)}
,\\
\label{C:eqn:Neumann:D:1}
\doublebar{\M_{\mat A} \D^{\mat A}\arr f}_{(\dot W\!A^2_{m-1,1}(\R^n))^*}
&\leq C\doublebar{\arr f}_{\dot W\!A^2_{m-1,0}(\R^n)}
\end{align}
for any $\arr f\in \dot W\!A^2_{m-1,0}(\R^n)$ and any $\arr g\in \dot W_{-1}^2(\R^n)$.

Furthermore, by Theorems~\ref{C:thm:square} and~\ref{C:thm:Neumann:2}, and the bounds \eqref{C:eqn:S:L2:smooth} and~\eqref{C:eqn:D:L2:smooth}, we have that 
\begin{align}
\label{C:eqn:Neumann:S:2}
\doublebar{\M_{\mat A} \s^{L}\arr g}_{(\dot W\!A^2_{m-1,0}(\R^n))^*}
&\leq C\doublebar{\arr g}_{L^2(\R^n)}
,\\
\label{C:eqn:Neumann:D:2}
\doublebar{\M_{\mat A} \D^{\mat A}\arr f}_{(\dot W\!A^2_{m-1,0}(\R^n))^*}
&\leq C\doublebar{\arr f}_{\dot W\!A^2_{m-1,1}(\R^n)}
\end{align}
for any $\arr f\in \dot W\!A^2_{m-1,0}(\R^n)$ and any $\arr g\in L^2(\R^n)$.

\subsection{The Green's formula}
\label{C:sec:green}

Recall that if $Lu=0$ in $\R^\dmn_\pm$ and $u\in \dot W^2_m(\R^\dmn_\pm)$, then $u$ satisfies the Green's formula \eqref{C:eqn:green} or~\eqref{C:eqn:green:lower}.
We are chiefly concerned with solutions that satisfy square-function estimates, as in Section~\ref{C:sec:trace}; thus, we would like to show that such functions satisfy the Green's formula as well.

\begin{thm}\label{C:thm:green}
Let $L$ be as in Theorem~\ref{C:thm:square}.

Let $v$ satisfy the conditions of Theorem~\ref{C:thm:Dirichlet:1} or the corresponding condition in the lower half-space. Then the Green's formula \eqref{C:eqn:green} or~\eqref{C:eqn:green:lower} is valid for $u=v$.

Similarly, 
let $w$ satisfy the conditions of Theorem~\ref{C:thm:Neumann:2} or the corresponding condition in the lower half-space. Then the Green's formula \eqref{C:eqn:green} or~\eqref{C:eqn:green:lower} is valid for $u=w$.
\end{thm}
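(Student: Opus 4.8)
The plan is to reduce everything to the case of solutions lying in $\dot W^2_m(\R^\dmn_\pm)$, for which the Green's formulas \eqref{C:eqn:green} and~\eqref{C:eqn:green:lower} are already known, by exploiting $t$-independence of the coefficients: if $Lu=0$ in $\R^\dmn_+$ and $\sigma>0$, then the vertical translate $u_\sigma(x,t)=u(x,t+\sigma)$ again satisfies $Lu_\sigma=0$ in $\R^\dmn_+$. I will argue in the upper half-space; the lower half-space cases follow from the reflection $(x,t)\mapsto(x,-t)$ exactly as in Section~\ref{C:sec:trace}. Throughout I use that $\Tr_{m-1}^+$ and $\M_{\mat A}^+$ are linear, and that, by Sections~\ref{C:sec:potentials:bounds} and~\ref{C:sec:Dirichlet:layer}, $\arr f\mapsto\nabla^m\D^{\mat A}\arr f$ and $\arr g\mapsto\nabla^m\s^L\arr g$ are bounded from $\dot W\!A^2_{m-1,0}(\R^n)$ and $\dot W^2_{-1}(\R^n)$ into $L^2(\R^\dmn,\abs{t}\,dt\,dx)$ by the bounds~\eqref{C:eqn:D:square:rough} and~\eqref{C:eqn:S:square:variant}, and from $\dot W\!A^2_{m-1,1}(\R^n)$ and $(\dot W\!A^2_{m-1,0}(\R^n))^*$ into the space of functions $F$ on $\R^\dmn$ with $\sup_{t}\doublebar{F(\,\cdot\,,t)}_{L^2(\R^n)}<\infty$ by the bounds~\eqref{C:eqn:D:L2:smooth} and~\eqref{C:eqn:S:L2:smooth}.

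Consider first $u=v$ with $v$ as in Theorem~\ref{C:thm:Dirichlet:1}. Since $\int_{\R^n}\int_0^\infty\abs{\nabla^m v}^2\,t\,dt\,dx<\infty$, for each fixed $\sigma>0$ we have $\int_{\R^n}\int_0^\infty\abs{\nabla^m v_\sigma}^2\,dt\,dx\le\sigma^{-1}\int_{\R^n}\int_\sigma^\infty\abs{\nabla^m v}^2\,t\,dt\,dx<\infty$, so $v_\sigma\in\dot W^2_m(\R^\dmn_+)$ and the Green's formula \eqref{C:eqn:green} holds for $v_\sigma$. I would then let $\sigma\to0^+$ in that identity, taking every limit in $L^2(\R^\dmn,\abs{t}\,dt\,dx)$: the left-hand side $\1_+\nabla^m v_\sigma$ converges to $\1_+\nabla^m v$ by the computation already carried out in the proof of Lemma~\ref{C:lem:Neumann:limit:1}; in the double-layer term $\Tr_{m-1}^+v_\sigma\to\Tr_{m-1}^+v$ in $\dot W\!A^2_{m-1,0}(\R^n)$ by Theorem~\ref{C:thm:Dirichlet:1}, so $\nabla^m\D^{\mat A}(\Tr_{m-1}^+v_\sigma)\to\nabla^m\D^{\mat A}(\Tr_{m-1}^+v)$ by~\eqref{C:eqn:D:square:rough}; and in the single-layer term $\M_{\mat A}^+v_\sigma\to\M_{\mat A}^+v$ in $\dot W^2_{-1}(\R^n)$ by Lemma~\ref{C:lem:Neumann:limit:1}, so $\nabla^m\s^L(\M_{\mat A}^+v_\sigma)\to\nabla^m\s^L(\M_{\mat A}^+v)$ by~\eqref{C:eqn:S:square:variant}. (Here one first normalizes $v$ by subtracting the polynomial $P$ of Theorem~\ref{C:thm:Dirichlet:1}, which is permitted since solutions are considered modulo polynomials of degree $m-1$, so that $\Tr_{m-1}^+v$ genuinely lies in $\dot W\!A^2_{m-1,0}(\R^n)$; this changes neither side of \eqref{C:eqn:green}, as $\nabla^m\D^{\mat A}(\Tr_{m-1}^+P)=0$ and $\M_{\mat A}^+P=0$.)

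The case $u=w$ with $w$ as in Theorem~\ref{C:thm:Neumann:2} is more delicate, because the translate $w_\sigma$ need not belong to $\dot W^2_m(\R^\dmn_+)$ — for such $w$ one controls $\sup_{t}\doublebar{\nabla^m w(\,\cdot\,,t)}_{L^2(\R^n)}$ but not $\doublebar{\nabla^m w}_{L^2(\R^\dmn_+)}$. I would therefore introduce a second parameter: for $R>\sigma>0$, write $\nabla^m(w_\sigma-w_R)(x,t)=-\int_{t+\sigma}^{t+R}\nabla^m\partial_r w(x,r)\,dr$, and conclude, by the Cauchy--Schwarz inequality and Fubini's theorem, that $\int_{\R^n}\int_0^\infty\abs{\nabla^m(w_\sigma-w_R)}^2\,dt\,dx\le(R-\sigma)\int_{\R^n}\int_0^\infty\abs{\nabla^m\partial_r w}^2\,r\,dr\,dx<\infty$; thus $w_\sigma-w_R\in\dot W^2_m(\R^\dmn_+)$ and \eqref{C:eqn:green} applies to it. Sending $R\to\infty$ first: by Theorem~\ref{C:thm:Dirichlet:2} (the array $\arr p$ vanishes by hypothesis) $\nabla^m w(\,\cdot\,,t)\to0$ in $L^2(\R^n)$ as $t\to\infty$, so $\1_+\nabla^m w_R\to0$ uniformly in $t$ and $\Tr_{m-1}^+w_R\to0$ in $\dot W\!A^2_{m-1,1}(\R^n)$, while $\M_{\mat A}^+w_R\to0$ by Lemma~\ref{C:lem:Neumann:limit:2}; combined with the boundedness of $\nabla^m\D^{\mat A}$ and $\nabla^m\s^L$ recorded above, this yields \eqref{C:eqn:green} for $w_\sigma$. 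Finally, sending $\sigma\to0^+$ in that identity: $\Tr_{m-1}^+w_\sigma\to\Tr_{m-1}^+w$ in $\dot W\!A^2_{m-1,1}(\R^n)$ by Theorem~\ref{C:thm:Dirichlet:2}, $\M_{\mat A}^+w_\sigma\to\M_{\mat A}^+w$ by Lemma~\ref{C:lem:Neumann:limit:2}, and $\1_+\nabla^m w_\sigma\to\1_+\nabla^m w$ locally in $L^2(\R^\dmn_+)$ and, near $\{t=0\}$, in the sense of distributions by dominated convergence using $\sup_{t}\doublebar{\nabla^m w(\,\cdot\,,t)}_{L^2(\R^n)}<\infty$ together with $\nabla^m w(\,\cdot\,,t)\to\arr f$ in $L^2(\R^n)$ as $t\to0^+$; passing to the limit in \eqref{C:eqn:green} for $w_\sigma$ gives the assertion.

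Most of the substance — the convergence of the Neumann data under vertical translation (Lemmas~\ref{C:lem:Neumann:limit:1} and~\ref{C:lem:Neumann:limit:2}) and the mapping properties of $\nabla^m\D^{\mat A}$ and $\nabla^m\s^L$ (Sections~\ref{C:sec:potentials:bounds} and~\ref{C:sec:Dirichlet:layer}) — is already in hand, so the proof is mainly a matter of assembling these ingredients while tracking the topology in which each term converges. I expect the main genuine obstacle to be the $w$ case: since $w$ itself need not lie in $\dot W^2_m(\R^\dmn_+)$ one cannot translate directly, and the double limit must be performed carefully — in particular one must check that the $R\to\infty$ limits hold in a mode independent of $\sigma$, and that the resulting convergence of $\1_+\nabla^m w_\sigma$ near the boundary is strong enough to pass to the limit in \eqref{C:eqn:green}.
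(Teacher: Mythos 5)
Your proposal is correct and follows essentially the same route as the paper: translate vertically, apply \eqref{C:eqn:green} to $v_\sigma$ (respectively to $w_\varepsilon-w_T$), and pass to the limit using Theorems~\ref{C:thm:Dirichlet:1}--\ref{C:thm:Dirichlet:2}, Lemmas~\ref{C:lem:Neumann:limit:1}--\ref{C:lem:Neumann:limit:2}, and the boundedness of $\D^{\mat A}$ and $\s^L$. The only cosmetic differences are that the paper takes the double limit $\varepsilon\to0^+$, $T\to\infty$ simultaneously for $w_{\varepsilon,T}$ (checking convergence slice-wise in $L^2(\R^n)$ for each fixed $t$, rather than in the mixed topologies you use), and that you make the membership $w_\sigma-w_R\in\dot W^2_m(\R^\dmn_+)$ explicit via Cauchy--Schwarz where the paper simply integrates $\partial_\tau w_\tau$ over $[\varepsilon,T]$.
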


\begin{proof} We will work only in the upper half-space $\R^\dmn_+$; the argument in $\R^\dmn_-$ is similar.


Let $w_\varepsilon(x,t)= w(x,t+\varepsilon)$,
and let $w_{\varepsilon,T} = w_\varepsilon-w_T$. 
Then $\partial_\dmn w_\tau=\partial_\tau w_\tau\in \dot W^2_m(\R^\dmn_+)$ for any $\tau>0$; because \begin{equation*}w_{\varepsilon,T}=-\int_\varepsilon^T \partial_\tau w_\tau\,d\tau,\end{equation*}
we have that $w_{\varepsilon,T}\in \dot W^2_m(\R^\dmn_+)$ for any $0<\varepsilon<T$.
Thus, by formula~\eqref{C:eqn:green},
\begin{multline*}\nabla^m w(x,t+\varepsilon)-\nabla^m w(x,t+T) 
\\= -\nabla^m \D^{\mat A}(\Tr_{m-1}^+ w_{\varepsilon,T})(x,t) + \nabla^m \s^{L} (\M_{\mat A}^+ w_{\varepsilon,T})(x,t).\end{multline*}
We take the limit of all four terms as $\varepsilon\to 0^+$ and as $T\to\infty$.

By Theorem~\ref{C:thm:Dirichlet:2}, we have that $\nabla^m w(\,\cdot\,,t+T)\to 0$ in $L^2(\R^n)$ as $T\to \infty$; by Theorem~\ref{C:thm:Dirichlet:2}, Lemma~\ref{C:lem:slices} and the Caccioppoli inequality, $\nabla^m w(\,\cdot\,,t+\varepsilon)\to \nabla^m w(\,\cdot\,,t)$ in $L^2(\R^n)$ as $\varepsilon\to 0^+$.

By Theorem~\ref{C:thm:Dirichlet:2}, the above limits are valid for $t=0$; thus,  $\Tr_{m-1}^+ w_{\varepsilon,T}\to \Tr_{m-1}^+ w$ in $\dot W\!A^2_{m-1,1}(\R^n)$ as $\varepsilon\to 0^+$ and $T\to\infty$. By boundedness of the double layer potential (the bound~\eqref{C:eqn:D:square}) and by Theorem~\ref{C:thm:Dirichlet:2}, the Caccioppoli inequality and Lemma~\ref{C:lem:slices}, $\nabla^m\D^{\mat A}(\Tr_{m-1}^+ w_{\varepsilon,T})(\,\cdot\,,t)\to \nabla^m\D^{\mat A}(\Tr_{m-1}^+ w)(\,\cdot\,,t)$ in $L^2(\R^n)$.

Finally, by Lemma~\ref{C:lem:Neumann:limit:2}, $\M_{\mat A}^+ w_{\varepsilon}\to \M_{\mat A}^+ w$ and $\M_{\mat A}^+ w_T\to 0$ in $L^2(\R^n)$ as $\varepsilon\to 0^+$ and $T\to \infty$. By boundedness of the single layer potential (the bound~\eqref{C:eqn:S:square}) and by the Caccioppoli inequality and Theorem~\ref{C:thm:Dirichlet:2}) and Lemma~\ref{C:lem:slices}, we have that $\nabla^m\s^{L}(\M_{\mat A}^+  w_{\varepsilon,T})(\,\cdot\,,t)\to \nabla^m\s^{L}(\M_{\mat A}^+  w)(\,\cdot\,,t)$ in $L^2(\R^n)$.

Thus, the Green's formula is valid.

The same argument is valid for~$v$; in fact, $v_\sigma\in \dot W^2_m(\R^\dmn_+)$ for any $\sigma>0$, and so we may work with $v_\varepsilon$ and not $v_{\varepsilon,T}$.
\end{proof}

\section{The Rellich identity and uniqueness of solutions}
\label{C:sec:rellich}

The second-order Rellich identity is one of the cornerstones of the theory. In the following theorem we provide a one-sided higher order generalization. This generalization is enough to prove uniqueness of solutions to the Neumann problem~\eqref{C:eqn:neumann:regular}.

\begin{thm}\label{C:thm:rellich} Suppose that $L$ is an elliptic operator of order $2m$ associated with coefficients $\mat A$ that are $t$-independent in the sense of formula~\eqref{C:eqn:t-independent} and satisfy the ellipticity conditions \eqref{C:eqn:elliptic:slices:strong} and~\eqref{C:eqn:elliptic:bounded}.

Suppose in addition that the coefficients $\mat A$ are self-adjoint; that is, that $A_{\alpha\beta}=\overline{A_{\beta\alpha}}$ for any $\abs\alpha=\abs\beta=m$.

Let $w$ satisfy the conditions of Theorems~\ref{C:thm:Dirichlet:2} and~\ref{C:thm:Neumann:2}. That is, suppose that $Lw=0$ in $\R^\dmn_+$, that $\int_0^\infty \int_{\R^n} \abs{\nabla^m \partial_t w(x,t)}^2,t,dx,dt<\infty$, and that $\nabla^m w(\,\cdot\,,t)\in L^2(\R^n)$ for some (hence every) $t>0$. 

By Theorems~\ref{C:thm:Dirichlet:2} and~\ref{C:thm:Neumann:2}, $\Tr_m w$ exists as an $L^2(\R^n)$ function, and ${\M_{\mat A}^+ w}$ exists as a linear operator on $\dot W^2_{m-1,0}(\R^n)$.
Then we have the bound
\begin{align*}
\int_{\R^n} \abs{\Tr_m w(x)}^2\,dx
&\leq 
	-\frac{2}{\lambda}  \re \langle \Tr_{m-1}\partial_\dmn w, \M_{\mat A}^+ w\rangle_{\R^n}
\end{align*}
and so
\begin{equation*}\doublebar{\Tr_{m-1}^+ w}_{\dot W^2_1(\R^n)}
\leq \doublebar{\Tr_m^+ w}_{L^2(\R^n)}
\leq
C \doublebar{\M_{\mat A}^+ w}_{(\dot W\!A^2_{m-1,0}(\R^n))^*}
.\end{equation*}
\end{thm}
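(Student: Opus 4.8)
The plan is to run a one-sided Rellich argument based on the pointwise-in-$t$ identity
\[\partial_t\langle\nabla^m u(\,\cdot\,,t),\mat A\nabla^m u(\,\cdot\,,t)\rangle_{\R^n}=2\re\langle\nabla^m\partial_t u(\,\cdot\,,t),\mat A\nabla^m u(\,\cdot\,,t)\rangle_{\R^n},\]
which holds for any solution $u$ precisely because $\mat A$ is $t$-independent and self-adjoint (so that $\langle\nabla^m u(\,\cdot\,,t),\mat A\nabla^m u(\,\cdot\,,t)\rangle_{\R^n}$ is real). Integrated in $t$, the right-hand side will be recognized as a Neumann pairing through the weak formulation~\eqref{C:eqn:Neumann:W2}, while the left-hand side produces a boundary term at $t=0$ to which the strong slice ellipticity~\eqref{C:eqn:elliptic:slices:strong} applies.

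Since $w$ itself need not lie in $\dot W^2_m(\R^\dmn_+)$, I would first regularize exactly as in the proof of Theorem~\ref{C:thm:green}: with $w_\sigma(x,t)=w(x,t+\sigma)$ and $0<\varepsilon<T$, set $w_{\varepsilon,T}=w_\varepsilon-w_T$. The square function bounds together with a Hardy-type estimate give $w_{\varepsilon,T}\in\dot W^2_m(\R^\dmn_+)$, and likewise $\partial_\dmn w_{\varepsilon,T}=\partial_\dmn w_\varepsilon-\partial_\dmn w_T\in\dot W^2_m(\R^\dmn_+)$. As $Lw_{\varepsilon,T}=0$ in $\R^\dmn_+$, formula~\eqref{C:eqn:Neumann:W2} applies with the test function $\Psi=\partial_\dmn w_{\varepsilon,T}$, yielding
\[\langle\Tr_{m-1}^+(\partial_\dmn w_{\varepsilon,T}),\M_{\mat A}^+ w_{\varepsilon,T}\rangle_{\R^n}=\langle\nabla^m\partial_\dmn w_{\varepsilon,T},\mat A\nabla^m w_{\varepsilon,T}\rangle_{\R^\dmn_+}.\]
Because $\nabla^m w_{\varepsilon,T}$ and $\nabla^m\partial_\dmn w_{\varepsilon,T}$ both lie in $L^2(\R^\dmn_+)$, the right-hand side converges absolutely; writing it as $\int_0^\infty\langle\partial_t\nabla^m w_{\varepsilon,T}(\,\cdot\,,t),\mat A\nabla^m w_{\varepsilon,T}(\,\cdot\,,t)\rangle_{\R^n}\,dt$, taking real parts, and integrating the Rellich identity above in $t$ (the $t$-integrand being in $L^1(0,\infty)$ by Cauchy--Schwarz) gives $\frac12[\phi(\infty)-\phi(0)]$ with $\phi(t)=\langle\nabla^m w_{\varepsilon,T}(\,\cdot\,,t),\mat A\nabla^m w_{\varepsilon,T}(\,\cdot\,,t)\rangle_{\R^n}$. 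Here $\phi(\infty)=0$ since $\nabla^m w_{\varepsilon,T}(\,\cdot\,,t)\to 0$ in $L^2(\R^n)$ as $t\to\infty$ (because $\nabla^m w(\,\cdot\,,s)\to 0$ as $s\to\infty$, Theorem~\ref{C:thm:Dirichlet:2}), and $\nabla^m w_{\varepsilon,T}(\,\cdot\,,0)=\nabla^m w(\,\cdot\,,\varepsilon)-\nabla^m w(\,\cdot\,,T)\in L^2(\R^n)$ by the same theorem (with $\arr p=0$). Therefore
\[\re\langle\Tr_{m-1}^+(\partial_\dmn w_{\varepsilon,T}),\M_{\mat A}^+ w_{\varepsilon,T}\rangle_{\R^n}=-\tfrac12\re\langle\nabla^m w_{\varepsilon,T}(\,\cdot\,,0),\mat A\nabla^m w_{\varepsilon,T}(\,\cdot\,,0)\rangle_{\R^n}\leq-\tfrac{\lambda}{2}\doublebar{\nabla^m w_{\varepsilon,T}(\,\cdot\,,0)}_{L^2(\R^n)}^2,\]
the last step being~\eqref{C:eqn:elliptic:slices:strong}, extended by density from $\varphi$ smooth and compactly supported to the slice $\nabla^m w_{\varepsilon,T}(\,\cdot\,,0)$; note that this slice contains vertical derivatives, so the weaker bound~\eqref{C:eqn:elliptic:slices:weak} would not suffice, which is exactly why~\eqref{C:eqn:elliptic:slices:strong} is imposed.

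Now I would let $T\to\infty$ and then $\varepsilon\to 0^+$ in the resulting inequality $\doublebar{\nabla^m w(\,\cdot\,,\varepsilon)-\nabla^m w(\,\cdot\,,T)}_{L^2(\R^n)}^2\leq-\frac{2}{\lambda}\re\langle\Tr_{m-1}^+(\partial_\dmn w_{\varepsilon,T}),\M_{\mat A}^+ w_{\varepsilon,T}\rangle_{\R^n}$. On the left, $\nabla^m w(\,\cdot\,,\varepsilon)\to\Tr_m w$ and $\nabla^m w(\,\cdot\,,T)\to 0$ in $L^2(\R^n)$ by Theorem~\ref{C:thm:Dirichlet:2}. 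On the right, $\M_{\mat A}^+ w_{\varepsilon,T}=\M_{\mat A}^+ w_\varepsilon-\M_{\mat A}^+ w_T\to\M_{\mat A}^+ w$ in $L^2(\R^n)$ by Lemma~\ref{C:lem:Neumann:limit:2} (using that the definitions of $\M_{\mat A}^+$ via~\eqref{C:eqn:Neumann:E} and~\eqref{C:eqn:Neumann:W2} agree on $w_{\varepsilon,T}\in\dot W^2_m(\R^\dmn_+)$), while the components of $\Tr_{m-1}^+(\partial_\dmn w_{\varepsilon,T})$ are among those of $\nabla^m w(\,\cdot\,,\varepsilon)$ and $\nabla^m w(\,\cdot\,,T)$, hence converge in $L^2(\R^n)$ to the corresponding components of $\Tr_m w$ and to $0$, so $\Tr_{m-1}^+(\partial_\dmn w_{\varepsilon,T})\to\Tr_{m-1}\partial_\dmn w$ in $L^2(\R^n)$. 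This yields the first asserted bound
\[\doublebar{\Tr_m w}_{L^2(\R^n)}^2\leq-\tfrac{2}{\lambda}\re\langle\Tr_{m-1}\partial_\dmn w,\M_{\mat A}^+ w\rangle_{\R^n}.\]
The displayed chain then follows: $\doublebar{\Tr_{m-1}^+ w}_{\dot W^2_1(\R^n)}\leq\doublebar{\Tr_m^+ w}_{L^2(\R^n)}$ because $\nabla_\pureH\Tr_{m-1}^+ w$ is built from components of $\Tr_m^+ w$; and bounding the right-hand side above by $\frac{2}{\lambda}\doublebar{\Tr_{m-1}\partial_\dmn w}_{\dot W\!A^2_{m-1,0}(\R^n)}\doublebar{\M_{\mat A}^+ w}_{(\dot W\!A^2_{m-1,0}(\R^n))^*}$ and using that $\Tr_{m-1}\partial_\dmn w$ is a subarray of $\Tr_m w$ (so its $\dot W\!A^2_{m-1,0}(\R^n)$ norm is at most $\doublebar{\Tr_m w}_{L^2(\R^n)}$) gives $\doublebar{\Tr_m^+ w}_{L^2(\R^n)}\leq C\doublebar{\M_{\mat A}^+ w}_{(\dot W\!A^2_{m-1,0}(\R^n))^*}$.

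The step I expect to be most delicate is the handling of the two-sided cutoff: one must pass to $\dot W^2_m(\R^\dmn_+)$ for both $w_{\varepsilon,T}$ and $\partial_\dmn w_{\varepsilon,T}$ in order to invoke~\eqref{C:eqn:Neumann:W2}, and then undo both cutoffs in the limit while keeping track simultaneously of the convergence of the Neumann data (Lemma~\ref{C:lem:Neumann:limit:2}), of the boundary traces of $\nabla^m w$ (Theorem~\ref{C:thm:Dirichlet:2}), and of the density extension of~\eqref{C:eqn:elliptic:slices:strong} to the slice $\nabla^m w_{\varepsilon,T}(\,\cdot\,,0)$. Everything else is either algebra --- the Rellich identity, via self-adjointness and the fundamental theorem of calculus --- or a direct citation of the results of Sections~\ref{C:sec:known} and~\ref{C:sec:boundary}.
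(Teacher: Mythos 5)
Your proposal is correct and follows essentially the same route as the paper's proof: regularize via $w_{\varepsilon,T}=w_\varepsilon-w_T\in\dot W^2_m(\R^\dmn_+)$, apply the weak Neumann formulation~\eqref{C:eqn:Neumann:W2} with test function $\partial_\dmn w_{\varepsilon,T}$, integrate the $t$-derivative of the quadratic form (using $t$-independence and self-adjointness to make the Rellich identity work), evaluate the boundary term at $t=0$ with the strong slice ellipticity~\eqref{C:eqn:elliptic:slices:strong}, and pass to the limit $T\to\infty$, $\varepsilon\to 0^+$ using Theorem~\ref{C:thm:Dirichlet:2} and Lemma~\ref{C:lem:Neumann:limit:2}. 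The only cosmetic difference is the order of presentation (you start from the Neumann pairing and work back to the slice at $t=0$; the paper starts from the slice and works forward) and that you pass to the limit directly in the inner product rather than expanding all cross terms as the paper does, but these are the same estimates.
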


Because $\dot W\!A^2_{m-1,0}(\R^n)$ is a closed subset of $L^2(\R^n)$, we may extend any linear operator on $\dot W\!A^2_{m-1,0}(\R^n)$ to a linear operator on $L^2(\R^n)$, that is, to an $L^2$ function; thus, we have the bound
\begin{equation*}\doublebar{\Tr_m w}_{L^2(\R^n)}
\leq
C \doublebar{\M_{\mat A}^+ w}_{L^2(\R^n)}
.\end{equation*}

\begin{proof}[Proof of Theorem~\ref{C:thm:rellich}]
First, observe that, for any $t>0$, by the bound \eqref{C:eqn:elliptic:slices:strong},
\begin{equation*}\int_{\R^n} \abs{\nabla^m w(x,t)}^2\,dx
\leq \frac{1}{\lambda}  \int_{\R^n} \langle \nabla^m w(x,t),\mat A\nabla^m w(x,t)\rangle\,dx.\end{equation*}
Because $\mat A$ is self-adjoint, the integrand is necessarily real-valued. 

Let $w_{\sigma}(x,t) = w(x,t+\sigma)$ and let $w_{\varepsilon,T}=w_\varepsilon-w_T$. For any $\sigma>0$ we have that $\partial_\dmn w_\sigma=\partial_\sigma w_\sigma \in \dot W^2_m(\R^\dmn_+)$. Integrating $\partial_\sigma w_\sigma$ from $\sigma=\varepsilon$ to $\sigma=T$, as in the proof of Lemma~\ref{C:lem:invertible:2}, we have that $w_{\varepsilon,T}\in \dot W^2_m(\R^\dmn_+)$.

Now,
\begin{align*}
\int_{\R^n} \abs{\nabla^m w_{\varepsilon,T}(x,0)}^2\,dx
&\leq
 \frac{1}{\lambda}  \int_{\R^n} \langle \nabla^m w_{\varepsilon,T}(x,0),\mat A\nabla^m w_{\varepsilon,T}(x,0)\rangle\,dx
.\end{align*}
By Theorem~\ref{C:thm:Dirichlet:2}, we have that $\lim_{t\to \infty} \nabla^m w(\,\cdot\,,t)\to 0 $ in $L^2(\R^n)$, and so
\begin{align*}
\int_{\R^n} \abs{\nabla^m  w_{\varepsilon,T}(x,0)}^2\,dx
&\leq
 -\frac{1}{\lambda}  \int_0^\infty\frac{d}{dt}
\int_{\R^n} \langle \nabla^m w_{\varepsilon,T}(x,t),\mat A\nabla^m w_{\varepsilon,T}(x,t)\rangle\,dx\,dt
.\end{align*}
Because $\mat A$ is $t$-independent, we have that 
\begin{align*}
\frac{d}{dt}\langle \nabla^m w_{\varepsilon,T}(x,t),\mat A\nabla^m w_{\varepsilon,T}(x,t)\rangle
&=\langle \nabla^m \partial_t w_{\varepsilon,T}(x,t),\mat A\nabla^m w_{\varepsilon,T}(x,t)\rangle\\&\qquad+\langle \nabla^m w_{\varepsilon,T}(x,t),\mat A\nabla^m \partial_t w_{\varepsilon,T}(x,t)\rangle,\end{align*}
and again because $\mat A$ is self-adjoint, we have that
\begin{equation*}
\int_{\R^n} \abs{\nabla^m  w_{\varepsilon,T}(x,0)}^2\,dx
\leq -\frac{2}{\lambda} \re   \int_0^\infty  \int_{\R^n} \langle \nabla^m \partial_t w_{\varepsilon,T}(x,t),\mat A\nabla^m w_{\varepsilon,T}(x,t)\rangle\,dx\,dt
.\end{equation*}
Recall $w_{\varepsilon,T}\in \dot W^2_m(\R^\dmn_+)$ and $\partial_\dmn w_{\varepsilon,T}\in \dot W^2_m(\R^\dmn_+)$. Thus, by formula~\eqref{C:eqn:Neumann:W2} for the Neumann boundary values of a $\dot W^2_m(\R^\dmn_+)$-function, we have that
\begin{align*}
\int_{\R^n} \abs{\nabla^m  w_{\varepsilon,T}(x,0)}^2\,dx
&\leq -\frac{2}{\lambda} \re   \langle \Tr_{m-1}^+\partial_\dmn w_{\varepsilon,T},\M_{\mat A}^+ w_{\varepsilon,T}\rangle_{\R^n}
.\end{align*}
Now, because the definitions \eqref{C:eqn:Neumann:E} and \eqref{C:eqn:Neumann:W2} of Neumann boundary values coincide for $\dot W^2_m(\R^\dmn_+)$-functions, we have that $\M_{\mat A}^+ w_{\varepsilon,T}=\M_{\mat A}^+ w_{\varepsilon}-\M_{\mat A}^+ w_{T}$ where the two terms on the right-hand side are given by formula~\eqref{C:eqn:Neumann:E} and extend to bounded operators on $\dot W\!A^2_{m-1,0}(\R^n)$.

Thus, we have that
\begin{multline*}
\doublebar{\Tr_m^+ w_\varepsilon-\Tr_m^+ w_T}_{L^2(\R^n)}
\\\leq -\frac{2}{\lambda} \re   \langle \Tr_{m-1}^+\partial_\dmn w_{\varepsilon}-\Tr_{m-1}^+\partial_\dmn w_T,\M_{\mat A}^+ w_{\varepsilon}-\M_{\mat A}^+ w_T\rangle_{\R^n}
.\end{multline*}
Expanding the inner products, we see that
\begin{multline*}
\doublebar{\Tr_{m}^+ w_\varepsilon}_{L^2(\R^n)}^2 
+\doublebar{\Tr_{m}^+ w_T}_{L^2(\R^n)}^2
-2\doublebar{\Tr_{m}^+ w_\varepsilon}_{L^2(\R^n)} 
  \doublebar{\Tr_{m}^+ w_T}_{L^2(\R^n)}
\\\leq 
-\frac{2}{\lambda} \re   \langle \Tr_{m-1}^+\partial_\dmn w_{\varepsilon},\M_{\mat A}^+ w_{\varepsilon}\rangle_{\R^n}
+\frac{2}{\lambda} \re   \langle \Tr_{m-1}^+\partial_\dmn w_{\varepsilon},\M_{\mat A}^+ w_T\rangle_{\R^n}
\\
+\frac{2}{\lambda} \re   \langle \Tr_{m-1}^+\partial_\dmn w_T,\M_{\mat A}^+ w_{\varepsilon}\rangle_{\R^n}
-\frac{2}{\lambda} \re   \langle \Tr_{m-1}^+\partial_\dmn w_T,\M_{\mat A}^+ w_T\rangle_{\R^n}
.\end{multline*}
By Theorem~\ref{C:thm:Dirichlet:2}, $\nabla^m w_\sigma(\,\cdot\,,0)$ is bounded in $L^2(\R^n)$, uniformly in~$\sigma$.
By Theorem~\ref{C:thm:Neumann:2}, the same is true of $\M_{\mat A}^+ w_\sigma$. Again by Theorem~\ref{C:thm:Dirichlet:2}, $\Tr_{m-1}^+ w_\varepsilon\to \Tr_{m-1}^+ w$ and $\Tr_{m-1}^+ w_T\to 0$ in $\dot W_1^2(\R^n)$ as $\varepsilon\to 0^+$ and $T\to \infty$. By Lemma~\ref{C:lem:Neumann:limit:2}, $\M_{\mat A}^+ w_\varepsilon\to \M_{\mat A}^+ w$ in $L^2(\R^n)$ as $\varepsilon\to 0^+$ and $\M_{\mat A}^+ w_T\to 0$ in $L^2(\R^n)$ as $T\to\infty$.

Thus, taking appropriate limits, we have that
\begin{equation*}
\doublebar{\Tr_m^+ w}_{L^2(\R^n)}^2
\leq 
-\frac{2}{\lambda} \re   \langle \Tr_{m-1}^+\partial_\dmn w,\M_{\mat A}^+ w\rangle_{\R^n}
\end{equation*}
as desired.
\end{proof}

We now use the Rellich identity to establish uniqueness of solutions to the $L^2$-Neumann problem~\eqref{C:eqn:neumann:regular}.

\begin{thm}\label{C:thm:neumann:unique}
Let $\mat A$ and $w$ satisfy the conditions of Theorem~\ref{C:thm:rellich}. 
Then 
\begin{equation*}
\sup_{t>0}\doublebar{\nabla^m w(\,\cdot\,,t)}_{L^2(\R^n)}^2+ \int_0^\infty \int_{\R^n} \abs{\nabla^m \partial_t w(x,t)}^2\,t\,dx\,dt
\leq C\doublebar{\M_{\mat A}^+ w}_{L^2(\R^n)}^2
.\end{equation*}
In particular, if $\M_{\mat A}^+ w=0$ then $\nabla^m w\equiv 0$ in $\R^\dmn_+$.
\end{thm}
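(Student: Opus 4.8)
The plan is to derive the estimate from the Green's formula of Theorem~\ref{C:thm:green}, combined with the square-function and uniform bounds on layer potentials recorded in Section~\ref{C:sec:potentials:bounds}, and then to use the Rellich estimate of Theorem~\ref{C:thm:rellich} to absorb the Dirichlet contribution. Since $\mat A$ and $w$ satisfy the hypotheses of Theorem~\ref{C:thm:rellich}, in particular $w$ satisfies the hypotheses of Theorems~\ref{C:thm:Dirichlet:2} and~\ref{C:thm:Neumann:2}, so Theorem~\ref{C:thm:green} tells us the Green's formula~\eqref{C:eqn:green} holds for $u=w$; thus in $\R^\dmn_+$ we have $\nabla^m w=-\nabla^m\D^{\mat A}(\Tr_{m-1}^+ w)+\nabla^m\s^L(\M_{\mat A}^+ w)$. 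The point is that the right-hand side is a double layer potential of the Dirichlet data $\Tr_{m-1}^+ w$ plus a single layer potential of the Neumann data $\M_{\mat A}^+ w$, and each of these pieces is controlled by Theorems~\ref{C:thm:square} and~\ref{C:thm:square:rough}.

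First I would differentiate the Green's formula in $t$ and integrate $\abs{\nabla^m\partial_t w}^2$ against $t\,dt\,dx$ over $\R^\dmn_+$; the square-function bounds~\eqref{C:eqn:D:square} for $\D^{\mat A}$ and~\eqref{C:eqn:S:square} for $\s^L$ then give
\[\int_{\R^n}\int_0^\infty\abs{\nabla^m\partial_t w(x,t)}^2\,t\,dt\,dx\leq C\doublebar{\Tr_{m-1}^+ w}_{\dot W^2_1(\R^n)}^2+C\doublebar{\M_{\mat A}^+ w}_{L^2(\R^n)}^2.\]
Next, fixing $t>0$ in the Green's formula and applying the uniform $L^2$ bounds~\eqref{C:eqn:D:L2:smooth} and~\eqref{C:eqn:S:L2:smooth}, I would obtain $\doublebar{\nabla^m w(\,\cdot\,,t)}_{L^2(\R^n)}\leq C\doublebar{\Tr_{m-1}^+ w}_{\dot W^2_1(\R^n)}+C\doublebar{\M_{\mat A}^+ w}_{L^2(\R^n)}$, and taking the supremum over $t>0$ yields the companion bound on $\sup_{t>0}\doublebar{\nabla^m w(\,\cdot\,,t)}_{L^2(\R^n)}^2$; together these two facts reproduce the estimate~\eqref{C:eqn:AusMS:2}. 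Finally I would invoke Theorem~\ref{C:thm:rellich}, which gives $\doublebar{\Tr_{m-1}^+ w}_{\dot W^2_1(\R^n)}\leq\doublebar{\Tr_m^+ w}_{L^2(\R^n)}\leq C\doublebar{\M_{\mat A}^+ w}_{L^2(\R^n)}$; substituting this into the two displays eliminates the Dirichlet term and produces the asserted inequality. For the last sentence, if $\M_{\mat A}^+ w=0$ then the right-hand side vanishes, so $\nabla^m w(\,\cdot\,,t)=0$ in $L^2(\R^n)$ for every $t>0$, hence $\nabla^m w\equiv 0$ in $\R^\dmn_+$.

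I do not expect a genuine obstacle here: the argument is essentially an assembly of results already in hand, and the remaining work is triangle-inequality bookkeeping. The one point requiring a little care is checking that the inputs to the layer-potential estimates are admissible — namely that $\Tr_{m-1}^+ w$ lies in the Whitney space $\dot W\!A^2_{m-1,1}(\R^n)$ (not merely in $\dot W^2_1(\R^n)$) so that~\eqref{C:eqn:D:square} and~\eqref{C:eqn:D:L2:smooth} apply, and that $\M_{\mat A}^+ w$, a priori only a bounded operator on $\dot W\!A^2_{m-1,0}(\R^n)$, may legitimately be identified with an $L^2(\R^n)$ function so that~\eqref{C:eqn:S:square} and~\eqref{C:eqn:S:L2:smooth} apply. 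Both facts follow from Theorems~\ref{C:thm:Dirichlet:2} and~\ref{C:thm:rellich} together with the fact that $\dot W\!A^2_{m-1,0}(\R^n)$ is closed in $L^2(\R^n)$, exactly as noted in the statement of Theorem~\ref{C:thm:rellich}.
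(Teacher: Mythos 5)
Your proposal is correct and follows essentially the same route as the paper: Green's formula, the square-function bounds on the layer potentials, and Theorem~\ref{C:thm:rellich} to absorb the Dirichlet term. The one minor deviation is that the paper obtains $\sup_{t>0}\doublebar{\nabla^m w(\,\cdot\,,t)}_{L^2(\R^n)}^2\leq C\int_0^\infty\int_{\R^n}\abs{\nabla^m\partial_t w}^2\,t\,dx\,dt$ directly from Theorem~\ref{C:thm:Dirichlet:2} (using that $\arr p=0$), rather than going back to the Green's formula and the uniform bounds~\eqref{C:eqn:D:L2:smooth}, \eqref{C:eqn:S:L2:smooth} as you do; both are valid, and the paper's shortcut is marginally more economical.
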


\begin{proof}
By Theorem~\ref{C:thm:Dirichlet:2}, 
\begin{equation*}\sup_{t>0}\doublebar{\nabla^m w(\,\cdot\,,t)}_{L^2(\R^n)}^2\leq C \int_0^\infty \int_{\R^n} \abs{\nabla^m \partial_t w(x,t)}^2\,t\,dx\,dt.\end{equation*}
By Theorem~\ref{C:thm:green}, we have that $\nabla^m w=-\nabla^m \D^{\mat A} (\Tr_{m-1}^+ w) +\nabla^m \s^L(\M_{\mat A}^+ w)$. Thus, by Theorem~\ref{C:thm:square}, we have that
\begin{equation}\label{C:eqn:Green:control}
\int_0^\infty \int_{\R^n} \abs{\nabla^m \partial_t w(x,t)}^2,t,dx,dt \leq C\doublebar{\Tr_{m-1}^+ w}_{\dot W_1^2(\R^n)}^2+ C\doublebar{\M_{\mat A}^+ w}_{L^2(\R^n)}^2.\end{equation}
By Theorem~\ref{C:thm:rellich}, $\doublebar{\Tr_{m-1}^+ w}_{\dot W_1^2(\R^n)}\leq C\doublebar{\M_{\mat A}^+ w}_{L^2(\R^n)}$
and the proof is complete.
\end{proof}

\begin{rmk}\label{C:rmk:Dirichlet:good}
As mentioned in the introduction, contrary to the present case, it is often easier to solve the Dirichlet or Dirichlet regularity problem than the Neumann problem, and indeed it is often easier to formulate the Dirichlet problem than the Neumann problem.

However, observe that the bound~\eqref{C:eqn:Green:control} is essentially control on solutions in terms of the Dirichlet and Neumann boundary values. Thus, to derive uniqueness of solutions to the Dirichlet problem using this bound, we must bound the Neumann boundary values, and vice versa. 

Theorem~\ref{C:thm:rellich} allows us to control the Dirichlet boundary values by the Neumann boundary values. In a sense, we may say that the Dirichlet boundary values of a solution are at least as well behaved as the Neumann boundary values. Thus, it is still the case in the present context that Dirichlet boundary values are better behaved and easier to work with than Neumann boundary values; the arguments based on the Green's formula mean that good behavior of the Dirichlet boundary values implies uniqueness for the Neumann problem, not the Dirichlet problem.
\end{rmk}

Thus, to establish well posedness of the problem~\eqref{C:eqn:neumann:regular}, it suffices to establish only that solutions exist.

We remark that as usual, a corresponding result is valid in the lower half-space.

\section{Existence of solutions in a special case}
\label{C:sec:special}

In this section, we will prove the following theorem.

\begin{thm}\label{C:thm:special} For any given $n$ and $m$, there is an operator $L$ of order $2m$, acting on functions defined on~$\R^\dmn$, and associated to real constant coefficients $\mat A$ that satisfy the bound \eqref{C:eqn:elliptic:bounded}, the ellipticity condition \eqref{C:eqn:elliptic:slices:strong}, and are self-adjoint, such that the Neumann problem \eqref{C:eqn:neumann:regular} is well posed.
\end{thm}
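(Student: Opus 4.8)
The operator I will take is $L_0=(-1)^m\Delta^m$, realized through the real constant self-adjoint coefficients $\mat A_0=\begin{pmatrix}A_{0,\alpha\beta}\end{pmatrix}$ with $A_{0,\alpha\beta}=\frac{m!}{\alpha!}\delta_{\alpha\beta}$, so that $\langle\nabla^m\psi,\mat A_0\nabla^m\psi\rangle=\sum_{\abs\gamma=m}\frac{m!}{\gamma!}\abs{\partial^\gamma\psi}^2$. Since $\frac{m!}{\gamma!}\geq1$, these coefficients satisfy the bound \eqref{C:eqn:elliptic:bounded} with $\Lambda=m!$ and the boundary ellipticity condition \eqref{C:eqn:elliptic:slices:strong} with $\lambda=1$, and being diagonal they are self-adjoint; thus they meet all the standing hypotheses, and in particular Theorem~\ref{C:thm:neumann:unique} already gives uniqueness (up to polynomials of degree $m-1$) of solutions to \eqref{C:eqn:neumann:regular} for $\mat A_0$. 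So it remains only to construct, for each $\arr g\in L^2(\R^n)$, a function $w$ with $L_0w=0$ in $\R^\dmn_+$, $\M_{\mat A_0}^+w=\arr g$, and the square-function plus uniform $L^2$ estimate of \eqref{C:eqn:neumann:regular}.

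First I will reduce the datum to its normal components. Every element of $\mathfrak D=\{\Tr_{m-1}\varphi\}$ is determined by the normal jet $(\varphi_0,\dots,\varphi_{m-1})$, $\varphi_j=\partial_\dmn^j\varphi(\,\cdot\,,0)$, via $(\Tr_{m-1}\varphi)_{(\gamma',j)}=\partial_\pureH^{\gamma'}\varphi_j$; integrating by parts horizontally, the pairing of $\arr g$ against such an array depends only on the distributions $h_j:=\sum_{\abs{\gamma'}=m-1-j}(-1)^{\abs{\gamma'}}\partial_\pureH^{\gamma'}g_{(\gamma',j)}$, $0\leq j\leq m-1$, which satisfy $\abs{\widehat{h_j}(\xi)}\leq C\abs\xi^{\,m-1-j}\abs{\widehat{\arr g}(\xi)}$. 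Hence requiring $\M_{\mat A_0}^+w=\arr g$ is the same as requiring the $m$ scalar components of the operator $\M_{\mat A_0}^+w$ on $\mathfrak D$ to equal $(h_0,\dots,h_{m-1})$; conversely, any family $(h_j)$ with $h_j\in\dot W^2_{-(m-1-j)}(\R^n)$ arises from some $\arr g\in L^2(\R^n)$ with $\doublebar{\arr g}_{L^2(\R^n)}\leq C\sum_j\doublebar{h_j}_{\dot W^2_{-(m-1-j)}(\R^n)}$, for instance by choosing $\arr g$ built from homogeneous Fourier multipliers of the $h_j$.

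Next I will build $w$ by the Fourier transform in $x$. Since $\widehat{\Delta^m w}(\xi,t)=(\partial_t^2-\abs\xi^2)^m\widehat w(\xi,t)$, the solutions of $L_0w=0$ in $\R^\dmn_+$ that decay as $t\to+\infty$ are exactly those with $\widehat w(\xi,t)=\sum_{k=0}^{m-1}a_k(\xi)\,t^k e^{-\abs\xi t}$. For such $w$ the pairing $\langle\Tr_{m-1}\varphi,\M_{\mat A_0}^+w\rangle_{\R^n}$ equals $\langle\nabla^m\varphi,\mat A_0\nabla^m w\rangle_{\R^\dmn_+}$ for $\varphi\in C^\infty_0(\R^\dmn)$ (see Section~\ref{C:sec:dfn:Neumann} and Theorem~\ref{C:thm:Neumann:2}); integrating this by parts in $\R^\dmn_+$ and using $\Delta^m w=0$ to discard the interior term produces constant-coefficient tangential operators $\mathcal N_0,\dots,\mathcal N_{m-1}$, with $\mathcal N_j$ of order $2m-1-j$, such that $\langle\Tr_{m-1}\varphi,\M_{\mat A_0}^+w\rangle_{\R^n}=-\sum_{j=0}^{m-1}\langle\partial_\dmn^j\varphi(\,\cdot\,,0),\mathcal N_j w(\,\cdot\,,0)\rangle_{\R^n}$. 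On the Fourier side $\widehat{\mathcal N_j w}(\xi,0)=\sum_k N_{jk}(\xi)a_k(\xi)$ with $N_{jk}(\xi)=c_{jk}\abs\xi^{\,2m-1-j-k}$ for real constants $c_{jk}$. The crux is that the constant matrix $C=(c_{jk})$ is invertible. This is exactly what the already-proven uniqueness guarantees: if $Ca=0$ for some $a\in\mathbb C^m\setminus\{0\}$, then $w$ defined by $\widehat w(\xi,t)=\1_E(\xi)\sum_k a_k\abs\xi^k t^k e^{-\abs\xi t}$, for a ball $E$ bounded away from $0$, satisfies the hypotheses of Theorems~\ref{C:thm:Dirichlet:2} and~\ref{C:thm:Neumann:2} and has $\M_{\mat A_0}^+w=0$, so Theorem~\ref{C:thm:neumann:unique} forces $\nabla^m w\equiv0$, i.e.\ $w$ to agree with a polynomial of degree $\leq m-1$; but that is impossible for a nonzero function whose Fourier transform in $x$ is supported in $\overline E$, so $a=0$. (Equivalently, one may argue directly by running the Rellich computation of Theorem~\ref{C:thm:rellich} one frequency at a time.)

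Granting the invertibility of $C$, I set $a(\xi)=N(\xi)^{-1}\widehat h(\xi)$, which defines $w$ through the ansatz; then $L_0w=0$, and by the construction together with the reduction above $\M_{\mat A_0}^+w=\arr g$. Finally I check the estimate. Writing $N(\xi)=D_1(\xi)\,C\,D_2(\xi)$ with $D_1(\xi)=\mathrm{diag}(\abs\xi^{\,2m-1-j})_j$ and $D_2(\xi)=\mathrm{diag}(\abs\xi^{-k})_k$, the bound $\abs{\widehat{h_j}(\xi)}\leq C\abs\xi^{\,m-1-j}\abs{\widehat{\arr g}(\xi)}$ gives $\abs{a_k(\xi)}\leq C\abs\xi^{\,k-m}\abs{\widehat{\arr g}(\xi)}$, so every component of $\widehat{\nabla^m w}(\xi,t)$ is at most $C\abs{\widehat{\arr g}(\xi)}\,\Phi(\abs\xi t)$ and every component of $\widehat{\nabla^m\partial_t w}(\xi,t)$ is at most $C\abs\xi\,\abs{\widehat{\arr g}(\xi)}\,\Phi(\abs\xi t)$, where $\Phi(s)=(1+s)^m e^{-s}$. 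In particular $\nabla^m w(\,\cdot\,,t)\in L^2(\R^n)$ for every $t>0$ and $\int_{\R^n}\int_0^\infty\abs{\nabla^m\partial_t w}^2\,t\,dt\,dx<\infty$, so $w$ does satisfy the conditions under which $\M_{\mat A_0}^+w$ was computed. Plancherel gives $\doublebar{\nabla^m w(\,\cdot\,,t)}_{L^2(\R^n)}^2\leq C\doublebar{\Phi}_{L^\infty}^2\doublebar{\arr g}_{L^2(\R^n)}^2$ uniformly in $t>0$, and, after the substitution $s=\abs\xi t$, $\int_{\R^n}\int_0^\infty\abs{\nabla^m\partial_t w}^2\,t\,dt\,dx\leq C\bigl(\int_0^\infty\Phi(s)^2 s\,ds\bigr)\doublebar{\arr g}_{L^2(\R^n)}^2$, which is finite; this is exactly the bound demanded in \eqref{C:eqn:neumann:regular}. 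The main obstacle in this program is the invertibility of $C$ — the nondegeneracy of the Dirichlet-to-Neumann symbol of $\Delta^m$ in this formulation — together with keeping the three homogeneity computations (the reduction of $\arr g$, the symbol $N(\xi)$, and the final square-function estimate) mutually consistent; the invertibility itself, however, is essentially free from Theorem~\ref{C:thm:neumann:unique}.
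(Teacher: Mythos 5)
Your proof is correct but follows a genuinely different route from the paper. Where the paper deliberately avoids the polyharmonic operator and instead takes
$L\psi = \sum_{j=0}^m(-1)^j(-\Delta_\pureH)^{m-j}\partial_\dmn^{2j}\psi$ — whose characteristic polynomial in the normal variable has $2m$ \emph{distinct} roots $e^{\pi ik/(m+1)}$, so that the decaying ansatz is a sum of pure exponentials $\sum_{k=1}^m f_k(\xi)e^{2\pi i\abs\xi e^{\pi ik/(m+1)}t}$ — you work with $(-1)^m\Delta^m$ itself, whose repeated root $-\abs\xi$ forces the Jordan-block ansatz $\sum_{k=0}^{m-1}a_k(\xi)t^k e^{-\abs\xi t}$. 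The paper then establishes solvability by an \emph{explicit} computation: after summing geometric series, the matrix to invert is $M_{Lk}=\sin(\pi Lk/(m+1))$, whose nondegeneracy is a standard fact about the discrete sine transform. You, instead, establish invertibility of your constant Dirichlet-to-Neumann matrix $C=(c_{jk})$ \emph{non-constructively}: a nontrivial null vector of $C$ would manufacture, via a compactly-Fourier-supported trial solution, a function $w$ with $\M_{\mat A_0}^+w=0$ but $\nabla^m w\not\equiv 0$, contradicting Theorem~\ref{C:thm:neumann:unique}. This is a clever inversion of the usual logic (uniqueness implies surjectivity of the symbol), and it buys you the freedom to use the most canonical operator with the simplest coefficients $A_{\alpha\beta}=\frac{m!}{\alpha!}\delta_{\alpha\beta}$. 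The trade-off is that your argument leans on the full Rellich/Green's-formula machinery of Sections~\ref{C:sec:boundary}--\ref{C:sec:rellich} even for the model case, whereas the paper's choice keeps the existence step self-contained and elementary once the Fourier computation is carried out. Both proofs are valid, and the rest of your reduction — rewriting $\arr g$ in terms of the normal-jet data $(h_j)$, the homogeneity bookkeeping, and the final Plancherel/square-function estimate with $\Phi(s)=(1+s)^me^{-s}$ — is consistent and matches what the paper needs.
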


As discussed above, we need only show that solutions exist. 

Recall that we are working in a very nice domain (the upper half-space). In the case of the Dirichlet (or regularity) problem, the theorem is straightforward to prove: if $\mat A$ is any matrix with constant coefficients, we may solve the Dirichlet problem using the Fourier transform.
We will still use the Fourier transform to solve the Neumann problem; however, the argument will be somewhat more involved.

Throughout this section, we will let $\widehat f$ denote the Fourier transform in $\R^n$ (not $\R^\dmn$) given by
\begin{equation*}\widehat f(\xi)=\int_{\R^n} e^{-2\pi i\xi\cdot x}\,f(x)\,dx.\end{equation*}

Let $\arr g\in L^2(\R^n)$ be an array indexed by multiindices $\gamma$ with $\abs\gamma=m-1$. 
Let $\arr \varphi=\Tr_{m-1}\varphi$ for some smooth, compactly supported function~$\varphi$. As in the definition~\eqref{C:eqn:Neumann:E} of Neumann boundary values, let $\varphi_\ell(x)=\partial_t^\ell \varphi(x,t)\big\vert_{t=0}$. 
By Plancherel's theorem,
\begin{align*}
\langle \arr g, \arr \varphi\rangle_{\R^n}
&=
	\sum_{\ell=0}^{m-1} \sum_{\substack{\gamma_\dmn=\ell}} \langle g_\gamma, \partial_\pureH^{\gamma_\pureH}\varphi_\ell\rangle_{\R^n}
=
	\sum_{\ell=0}^{m-1} \sum_{\substack{\gamma_\dmn=\ell}} 
	\int_{\R^n} \overline{\widehat{g}_\gamma(\xi)} \,(2\pi i\xi)^{\gamma_\pureH}\, \widehat\varphi_{\ell}(\xi)\,d\xi
\\&=
	\sum_{\ell=0}^{m-1}
	\int_{\R^n}\widehat\varphi_{\ell}(\xi) \sum_{\substack{\abs{\gamma_\pureH}=m-1-\ell}} \overline{\widehat{g}_\gamma(\xi)} \,(2\pi i\xi)^{\gamma_\pureH} \,d\xi
.\end{align*}
Here, if $\gamma=(\gamma_1,\gamma_2,\dots,\gamma_\dmnMinusOne,\gamma_\dmn)$, then $\gamma_\pureH=(\gamma_1,\gamma_2,\dots,\gamma_\dmnMinusOne)$.

Thus, to establish existence of solutions to the Neumann problem, it suffices to show that, for each array of functions $\{G_\ell\}_{\ell=0}^{m-1}$ that satisfy the bound
\begin{equation}
\label{C:eqn:G:fourier}
\int_{\R^n} \abs{G_\ell(\xi)}^2\,\abs{\xi}^{2\ell+2-2m}\,d\xi<\infty\end{equation}
there is some function $w$ that satisfies
\begin{equation*}
\left\{\begin{aligned}
Lw&=0  \quad\text{in }\R^\dmn_+,\\
\langle \M_{\mat A}^+ w,\arr \varphi\rangle_{\R^n}
&=
\sum_{\ell=0}^{m-1}\int_{\R^n}\widehat\varphi_{\ell}(\xi) \,G_\ell(\xi)\,d\xi,
\\
\int_0^\infty \int_{\R^n} \abs{\nabla^m \partial_t w(x,t)}^2\,t\,dx\,dt&\leq C \sum_{\ell=0}^{m-1}\int_{\R^n} \abs{G_\ell(\xi)}^2\,\abs{\xi}^{2+2\ell-2m}\,d\xi
,\\
\doublebar{\nabla^m w(\,\cdot\,,t)}_{L^2(\R^n)}&<\infty \quad \text{for some $t>0$}.
\end{aligned}\right.\end{equation*}

Because $\M_{\mat A}^+ w$ is a complicated operator that depends on the choice of coefficients $\mat A$ associated to~$L$, in the remainder of this section, we will let $\mat A$ denote a particular choice of coefficients.

Let $\Delta_\pureH$ denote the Laplacian in $\R^n$, $\Delta_\pureH=\partial_{x_1}\partial_{x_1}+\dots+\partial_{x_n}\partial_{x_n}$. We observe that
\begin{equation*}(-\Delta_\pureH)^j=(-1)^j\sum_{\abs\gamma=j} \frac{j!}{\gamma_1!\gamma_2!\dots\gamma_\dmnMinusOne!} \partial_\pureH^{2\gamma}\end{equation*}
where the sum is over multiindices in $\N^n$ (equivalently multiindices in $\N^\dmn$ with $\gamma_\dmn=0$).

Let $L$ be the operator of the form~\eqref{C:eqn:divergence} associated to the (constant) coefficients $A_{\alpha\beta}$ given by 
\begin{equation*}A_{\alpha\alpha} = \frac{\abs{\alpha_\pureH}!}{\alpha_\pureH!},
\qquad A_{\alpha\beta}=0 \text{ if }\alpha\neq\beta.\end{equation*}
We thus have that
\begin{equation}\label{C:eqn:L:special}
(-\Delta_\pureH)^j = (-1)^j\sum_{\abs{\alpha_\pureH}=j} A_{\alpha\alpha}\partial_\pureH^{2\alpha_\pureH} \quad\text{and so}\quad 
L\psi = \sum_{j=0}^m(-1)^j (-\Delta_\pureH)^{m-j}\partial_\dmn^{2j} \psi.\end{equation}
Notice that $L$ is not the polyharmonic operator $(-\Delta)^m$; however, $L$ is a constant-coefficient elliptic operator and satisfies the bound \eqref{C:eqn:elliptic:slices:strong}.

For each $1\leq k\leq m$, let $f_k:\R^n\mapsto\C$ be a function that satisfies
\begin{equation}\label{C:eqn:f:fourier}\int_{\R^n} \abs{\xi}^{2m} \abs{f_k(\xi)}^2\,d\xi <\infty.\end{equation}
Let $w$ satisfy
\begin{equation}
\label{C:eqn:w:fourier}
\widehat w(\xi,t) = \sum_{k=1}^{m} 
f_k(\xi)\exp\bigl(2\pi i \abs{\xi} e^{\pi i k/(m+1)}t\bigr)
\end{equation}
where the Fourier transform is taken only in the horizontal variables. Notice that the real part of $ie^{\pi i k/(m+1)}$ is at most $-\sin (\pi/(m+1))$, and so if $t>0$ then the exponential decays as $t\to\infty$ or $\abs\xi\to\infty$.
Then 
\begin{equation*}
\sup_{t>0}\doublebar{\nabla^m w(\,\cdot\,,t)}_{L^2(\R^n)} < \infty,
\qquad
\lim_{t\to \infty} \doublebar{\nabla^m w(\,\cdot\,,t)}_{L^2(\R^n)}=0.\end{equation*}
By formula~\eqref{C:eqn:L:special},
\begin{align*}
\widehat{Lw}(\xi,t)
&= \sum_{j=0}^m (-1)^{j}(4\pi^2\abs{\xi}^2)^{m-j}\partial_t^{2j} \widehat w(\xi,t) 
\\&= (4\pi^2\abs\xi^2)^m
\sum_{k=1}^{m} 
f_k(\xi)\exp\bigl(2\pi i \abs{\xi} e^{\pi i k/(m+1)}t\bigr)\sum_{j=0}^m 
e^{2\pi i jk/(m+1)}
.\end{align*}
Summing the geometric series, we see that $Lw=0$ in $\R^\dmn_+$.

Furthermore, by Parseval's inequality,
\begin{align*}
\int_0^\infty \int_{\R^n} \abs{\nabla^m \partial_t w(x,t)}^2\,t\,dx\,dt
&\leq
	C\sum_{j=0}^{m}\int_0^\infty t\int_{\R^n}  \abs{\xi}^{2j} \abs{ \partial_t^{m+1-j} \widehat w(\xi,t)}^2\,d\xi\,dt
.\end{align*}
By the definition~\eqref{C:eqn:w:fourier} of~$\widehat w$,
\begin{multline*}\int_0^\infty \int_{\R^n} \abs{\nabla^m \partial_t w(x,t)}^2,t,dx,dt
\\\leq
	C
	\sum_{k=1}^m
	\int_0^\infty t\int_{\R^n}  \abs{\xi}^{2m+2} \abs{f_k(\xi)}^2
	\exp(-2\beta_k  \abs{\xi} t)
	\,d\xi\,dt
\end{multline*}
where $\beta_k=2\pi\sin(\pi k/(m+1))\geq \beta_1>0$. Interchanging the order of integration and evaluating the integral in~$t$, we see that
\begin{align}
\label{C:eqn:w:norm}
\int_0^\infty \int_{\R^n} \abs{\nabla^m \partial_t w(x,t)}^2,t,dx,dt
&\leq
	C
	\sum_{k=1}^m
	\int_{\R^n}  \abs{\xi}^{2m} \abs{f_k(\xi)}^2
	\,d\xi
.\end{align}

By definition of $\mat A$ and~$\mathcal{E}$, we have that
\begin{align*}
\langle \M_{\mat A}^+ w,\arr \varphi\rangle_{\R^n}
&=
\sum_{\ell=0}^{m-1} \frac{1}{\ell!}
\sum_{\substack{\abs\alpha=m }}
\int_0^\infty \langle  A_{\alpha\alpha}\partial^\alpha w(\,\cdot\,,t), \partial^\alpha(t^\ell\mathcal{Q}_t^m\varphi_\ell)\rangle_{\R^n}\,dt
.\end{align*}
By Plancherel's theorem, and because $\mat A$ is constant,
\begin{multline*}
\langle \M_{\mat A}^+ w,\arr \varphi\rangle_{\R^n}
\\=
\sum_{\ell=0}^{m-1} \sum_{j=0}^m 
\sum_{\substack{\abs\alpha=m \\ \alpha_\perp=j}}
\frac{1}{\ell!}
\int_0^\infty \langle  A_{\alpha\alpha}(2\pi i\,\cdot\,)^{\alpha_\pureH}\partial_t^j\widehat w(\,\cdot\,,t), (2\pi i\,\cdot\,)^{\alpha_\pureH} \partial_t^j(t^\ell\widehat{\mathcal{Q}_t^m\varphi_\ell})\rangle_{\R^n}\,dt
.\end{multline*}
By definition of $A_{\alpha\alpha}$,
\begin{equation*}
\langle \M_{\mat A}^+ w,\arr \varphi\rangle_{\R^n}=
\sum_{j=0}^m \sum_{\ell=0}^{m-1} 
\frac{1}{\ell!}
\int_0^\infty \langle  (2\pi \abs{\,\cdot\,})^{2m-2j}\partial_t^j\widehat w(\,\cdot\,,t), \partial_t^j(t^\ell\widehat{\mathcal{Q}_t^m\varphi_\ell})\rangle_{\R^n}\,dt
.\end{equation*}
Recall that $\mathcal{Q}_t^m=e^{-(-t^2\Delta_\pureH)^m}$. Thus, $\widehat{\mathcal{Q}_t^m\psi}(\xi)=e^{-(4\pi^2t^2\abs{\xi}^2)^m}\widehat\psi(\xi)$, and so
\begin{multline*}
\langle \M_{\mat A}^+ w,\arr \varphi\rangle_{\R^n}
\\=
\sum_{\ell=0}^{m-1} \sum_{j=0}^m 
\frac{1}{\ell!}
\int_0^\infty \int_{\R^n}
  (2\pi \abs{\xi})^{2m-2j}\partial_t^j\widehat w(\xi,t)\,\partial_t^j(t^\ell e^{-(4\pi^2 t^2\abs{\xi}^2)^m})\widehat{\varphi}_\ell(\xi)\,d\xi\,dt
.\end{multline*}
By definition of~$w$,
\begin{multline*}
\langle \M_{\mat A}^+ w,\arr \varphi\rangle_{\R^n}
\\\begin{aligned}
&=
	\sum_{\ell=0}^{m-1}\sum_{j=0}^m  \sum_{k=1}^{m} 
	\frac{1}{\ell!}
	\int_0^\infty \int_{\R^n}
	  (2\pi \abs{\xi})^{2m-2j}f_k(\xi)\,\partial_t^j
	\exp\bigl(2\pi i \abs{\xi} e^{\pi i k/(m+1)}t\bigr)
	\\&\qquad\qquad\times\partial_t^j(t^\ell e^{-(4\pi^2 t^2\abs{\xi}^2)^m})\widehat{\varphi}_\ell(\xi)\,d\xi\,dt
\\&=
	\sum_{\ell=0}^{m-1}\sum_{j=0}^m  \sum_{k=1}^{m} 
	\frac{1}{\ell!}
	\int_0^\infty \int_{\R^n}
	  \widehat{\varphi}_\ell(\xi)\,f_k(\xi)\,(2\pi \abs{\xi})^{2m-j}\, i^j  e^{\pi i jk/(m+1)}
	\\&\qquad\qquad\qquad
	\times\exp\bigl(2\pi i \abs{\xi} e^{\pi i k/(m+1)}t\bigr)
	\partial_t^j(t^\ell e^{-(4\pi^2 t^2\abs{\xi}^2)^m})\,d\xi\,dt
.\end{aligned}\end{multline*}
We wish to change the order of integration. We must show that the integral converges absolutely; it will be technically easier to show absolute convergence after the change.

Making the change of variables $u=t\abs{\xi}$, we see that 
\begin{equation*}\int_0^\infty 
	\abs{\exp\bigl(2\pi i \abs{\xi} e^{\pi i k/(m+1)}t\bigr)
	\partial_t^j(t^\ell e^{-(4\pi^2 t^2\abs{\xi}^2)^m})}\,dt = C_{j,k,\ell}\abs{\xi}^{j-\ell-1}.\end{equation*}
But by assumption on~$f$, and because $\varphi_\ell$ is smooth and compactly supported.
\begin{equation*}\int_{\R^n} \abs{\widehat{\varphi}_\ell(\xi)}\,\abs{f_k(\xi)}\,(2\pi \abs{\xi})^{2m-j}\,C_{j,k,\ell}\abs{\xi}^{j-\ell-1}\,d\xi<\infty.\end{equation*}
Thus we may change the order of integration to see that
\begin{multline*}
\langle \M_{\mat A}^+ w,\arr \varphi\rangle_{\R^n}
\\=
	\sum_{\ell=0}^{m-1}\sum_{j=0}^m  \sum_{k=1}^{m} 
	\frac{1}{\ell!}
	\int_{\R^n}\widehat{\varphi}_\ell(\xi)\,f_k(\xi)
	(2\pi \abs{\xi})^{2m-j}i^j e^{\pi i jk/(m+1)}
	\\\times\int_0^\infty 
	\exp\bigl(2\pi i \abs{\xi} e^{\pi i k/(m+1)}t\bigr)
	\partial_t^j(t^\ell e^{-(4\pi^2 t^2\abs{\xi}^2)^m})\,dt\,d\xi
.\end{multline*}

We will need a precies formula for (not a bound on) the second integral. We will obtain it by integrating by parts in~$t$. If $0\leq J\leq m$, then $\lim_{t\to 0^+} \partial_t^J (t^\ell e^{-\alpha t^{2m}})=0$ unless $J=\ell$, in which case the limit is~$\ell!$.
Thus, if $j\geq 1+ \ell$ then
\begin{multline*}
	\int_0^\infty \exp\bigl(2\pi i \abs{\xi} e^{\pi i k/(m+1)}t\bigr)
	\,\partial_t^j(t^\ell e^{-(4\pi^2t^2\abs{\xi}^2)^m})\,dt
\\\begin{aligned}
&=
	\int_0^\infty (-1)^j\partial_t^j\exp\bigl(2\pi i \abs{\xi} e^{\pi i k/(m+1)}t\bigr)
	\,(t^\ell e^{-(4\pi^2t^2\abs{\xi}^2)^m})\,dt
\\&\qquad+
 \lim_{t\to 0^+}
	(-1)^{j-\ell} \partial_t^{j-1-\ell} \exp\bigl(2\pi i \abs{\xi} e^{\pi i k/(m+1)}t\bigr)\ell!
\end{aligned}\end{multline*}
so
\begin{multline*}
	\int_0^\infty \exp\bigl(2\pi i \abs{\xi} e^{\pi i k/(m+1)}t\bigr)
	\,\partial_t^j(t^\ell e^{-(4\pi^2t^2\abs{\xi}^2)^m})\,dt
\\\begin{aligned}
&=
	\bigl(-2\pi i \abs{\xi})^j e^{\pi i jk/(m+1)}
	\int_0^\infty \exp\bigl(2\pi i \abs{\xi} e^{\pi i k/(m+1)}t\bigr)
	\,(t^\ell e^{-(4\pi^2t^2\abs{\xi}^2)^m})\,dt
\\&\qquad-
	  \bigl(-2\pi i \abs{\xi} e^{\pi i k/(m+1)}\bigr)^{j-1-\ell}\ell!
.\end{aligned}\end{multline*}
If $j\leq \ell$ then we have a very similar formula without the second term. Thus,
\begin{multline*}
\langle \M_{\mat A}^+ w,\arr \varphi\rangle_{\R^n}
\\ \begin{aligned} &=
	\sum_{\ell=0}^{m-1} \sum_{k=1}^{m} 
	\frac{1}{\ell!}
	\int_{\R^n}\widehat{\varphi}_\ell(\xi)\,f_k(\xi)
	(2\pi \abs{\xi})^{2m} \sum_{j=0}^m  e^{2\pi i jk/(m+1)}
	\\&\qquad\qquad\qquad\times
	\int_0^\infty \exp\bigl(2\pi i \abs{\xi} e^{\pi i k/(m+1)}t\bigr)
	\,(t^\ell e^{-(4\pi^2t^2\abs{\xi}^2)^m})\,dt\,d\xi
	\\&\quad-
	\sum_{\ell=0}^{m-1} \sum_{k=1}^{m} 
	\int_{\R^n}\widehat{\varphi}_\ell(\xi)\,f_k(\xi)
	(2\pi \abs{\xi})^{2m-1-\ell} i^{1+\ell} 	
	\sum_{j=\ell+1}^m   e^{\pi i (2j-1-\ell)k/(m+1)}
	 \,d\xi
.\end{aligned}
\end{multline*}
Summing our two geometric series, we see that
\begin{multline*}
\langle \M_{\mat A}^+ w,\arr \varphi\rangle_{\R^n}
\\=
	-\sum_{\ell=0}^{m-1} \sum_{k=1}^{m} 
	\int_{\R^n}\widehat{\varphi}_\ell(\xi)\,f_k(\xi)
	(2\pi \abs{\xi})^{2m-1-\ell} i^{\ell} 
	\frac{2\sin({\pi  (1+\ell)k/(m+1)}) }{e^{2\pi i k/(m+1)}-1}
	 \,d\xi
.\end{multline*}
Recall that, given functions $G_\ell$, 
we wish to find functions $f_k$ such that 
\begin{equation*}\langle \M_{\mat A}^+ w,\arr \varphi\rangle_{\R^n}
=\sum_{\ell=0}^{m-1}
	\int_{\R^n}\widehat\varphi_{\ell}(\xi) G_\ell(\xi)\,d\xi
\end{equation*}
and such that 
\begin{equation*}
\sum_{k=1}^m \int_{\R^n} \abs{f_k(\xi)}^2\abs{\xi}^{2m}\,d\xi
\leq C\sum_{\ell=0}^{m-1}
\int_{\R^n} \abs{G_\ell(\xi)}^2\,\abs{\xi}^{2+2\ell-2m}\,d\xi.\end{equation*}
Thus, it suffices to find functions  $f_k$ that satisfy the  bound \eqref{C:eqn:f:fourier} and the equations
\begin{equation*}(2\pi \abs{\xi})^{-m+1+\ell}G_\ell(\xi)=- 2i^{\ell} \sum_{k=1}^{m} 
	\frac{(2\pi \abs{\xi})^{m} f_k(\xi)}{e^{2\pi i k/(m+1)}-1}
	{\sin({\pi  (1+\ell)k/(m+1)}) }.\end{equation*}
As is well known in, for example, the theory of the discrete Fourier transform, the $m\times m$ matrix $M=\begin{pmatrix}M_{Lk}\end{pmatrix}_{L,k=1}^m$ whose entries are given by $M_{Lk}=\sin({\pi Lk/(m+1)})$ is invertible. Thus, given $G_\ell$, we may find functions~$f_k$; if the functions $G_\ell$ satisfy the bound~\eqref{C:eqn:G:fourier}, then the functions $f_k$ satisfy the bound~\eqref{C:eqn:f:fourier}, as desired.

\section{Invertibility of layer potentials and boundary value problems}
\label{C:sec:invertible}

There is a deep connection between well posedness of boundary value problems and invertibility of layer potentials.
The classic method of layer potentials states that if $\M_{\mat A}^+\D^{\mat A}$ is surjective $\DD\mapsto \NN$, then solutions to the Neumann problem with boundary values in $\NN$ exist. In \cite{Ver84}, Verchota proved a result (for harmonic functions, but the argument generalizes easily) going in the other direction: if solutions to the Neumann problem are unique in both $\R^\dmn_+$ and $\R^\dmn_-$, then $\M_{\mat A}^+\D^{\mat A}$ is one-to-one. The converses to these results for second order operators were proven in \cite{BarM13,BarM16A}, and the generalization to the higher order case was established in \cite{Bar17p}. 

We will summarize the relevant results of \cite{Bar17p} in Section~\ref{C:sec:invertible:known} and apply them in Section~\ref{C:sec:invertible:application}.

\subsection{Known results}
\label{C:sec:invertible:known}

Let  $\XX^+$ and $\XX^-$ be two spaces of functions (or equivalence classes of functions) defined in $\R^\dmn_+$ and $\R^\dmn_-$, respectively, and assume that if $u\in \XX^\pm$ then $\nabla^m u$ is locally integrable.
Let $\DD$ and $\NN$ be two spaces of equivalence classes of functions or distributions defined on $\R^n=\partial\R^\dmn_+$. 

Then we have the following theorem.

\begin{thm}[{\cite{Bar17p}}]
\label{C:thm:invertible}
Suppose that $L$ is an elliptic operator of order $2m$ associated with coefficients $\mat A$ that satisfy the ellipticity conditions \eqref{C:eqn:elliptic} and~\eqref{C:eqn:elliptic:bounded}.
Suppose that the following conditions are valid.

\begin{enumerate}
\item If $ u\in \XX^\pm$ and $L u=0$ in~$\R^\dmn_\pm$, then $\Tr_{m-1}^\pm u\in\DD$ and 
$\M_{\mat A}^\pm  u\in \NN$. 
\item The single layer potential $\s^{L}$ is bounded $\NN\mapsto \XX^+$ and $\NN\mapsto \XX^-$.
\item The double layer potential $\D^{\mat A}$ is bounded $\DD\mapsto\XX^+$ and $\DD\mapsto\XX^-$.
\item If $\arr g\in\NN$, then we have  the jump relations 
\begin{align*}
\Tr_{m-1}^+\s^{L} \arr g -\Tr_{m-1}^-\s^{L} \arr g
	&=0
,\\
 \M_{\mat A}^+ \s^{L} \arr g
+\M_{\mat A}^- \s^{L}\arr g
	&=\arr g
.\end{align*}
\item If $\arr f\in\DD$, then we have the jump relations
\begin{align*}
\Tr_{m-1}^+\D^{\mat A} \arr f -\Tr_{m-1}^-\D^{\mat A} \arr f
	&=-\arr f
,\\
\M_{\mat A}^+ \D^{\mat A} \arr f + \M_{\mat A}^- \D^{\mat A}\arr f 
	&=0
.\end{align*}
\item If $ u \in \XX^\pm$ and $L u=0$ in~$\R^\dmn_\pm$, then we have the Green's formulas
\begin{equation*} u = \mp\D^{\mat A} (\Tr_{m-1}^\pm u) + \s^{L} (\M_{\mat A}^\pm u)\quad\text{in }\XX^\pm.
\end{equation*}
\end{enumerate}

Then $\M_{\mat A}^\pm\D^{\mat A}$ is surjective $\DD\mapsto\NN$ if and only if, for every $\arr g\in\NN$, there exists a $u_+\in \XX^+$ and a $u_-\in\XX^-$ such that $\pm\M_{\mat A}^\pm u_\pm = \arr g$. (In this case there is some $\arr f\in\DD$ such that $u_\pm=\D^{\mat A} \arr f$.)

Furthermore, the bound $\doublebar{\arr f}_\DD \leq C\doublebar{\M_{\mat A}\D^{\mat A}\arr f}_\NN$ is valid for all $\arr f\in\DD$ if and only if the two bounds $\doublebar{u_+}_{\XX^+}\leq C \doublebar{\M_{\mat A}^+ u_+}_{\NN}$ and $\doublebar{u_-}_{\XX^+}\leq C \doublebar{\M_{\mat A}^+ u_-}_{\NN}$ are valid for all $u_\pm\in\XX^\pm$ with $Lu_\pm=0$ in $\R^\dmn_\pm$.
\end{thm}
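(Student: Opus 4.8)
The plan is to prove the two biconditionals separately. In each, one implication is the classical ``method of layer potentials'' direction, obtained simply by plugging double layer potentials into the standing hypotheses~(1)--(6); the other is a Verchota-type argument, obtained by combining the Green's formula~(6) with the two jump relations~(4) and~(5). I would take as given the standard fact (from the definition of $\D^{\mat A}$ via the Newton potential) that $L\D^{\mat A}\arr f=0$ in each half-space, and that the boundary maps in~(1)--(3) are bounded.

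For surjectivity, suppose first that $\M_{\mat A}^\pm\D^{\mat A}$ is surjective $\DD\mapsto\NN$; by the second identity in~(5), $\M_{\mat A}^+\D^{\mat A}=-\M_{\mat A}^-\D^{\mat A}$, so this is unambiguous. Given $\arr g\in\NN$, I would pick $\arr f\in\DD$ with $\M_{\mat A}^+\D^{\mat A}\arr f=\arr g$ and set $u_\pm$ equal to $\D^{\mat A}\arr f$ viewed as a function on $\R^\dmn_\pm$; condition~(3) puts $u_\pm\in\XX^\pm$, and~(5) gives $\M_{\mat A}^-u_-=-\arr g$, so that $\pm\M_{\mat A}^\pm u_\pm=\arr g$. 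For the converse, given such $u_\pm$ for every $\arr g$, I would put $\arr f_1=\Tr_{m-1}^+u_+$ and $\arr f_2=\Tr_{m-1}^-u_-$, which lie in $\DD$ by~(1); the Green's formulas~(6), together with $\M_{\mat A}^+u_+=\arr g$ and $\M_{\mat A}^-u_-=-\arr g$, become $u_+=-\D^{\mat A}\arr f_1+\s^L\arr g$ in $\XX^+$ and $u_-=\D^{\mat A}\arr f_2-\s^L\arr g$ in $\XX^-$. Applying $\M_{\mat A}^+$ to the first, $\M_{\mat A}^-$ to the second, and rewriting $\M_{\mat A}^-\D^{\mat A}\arr f_2=-\M_{\mat A}^+\D^{\mat A}\arr f_2$ via~(5), I get
\begin{align*}
\M_{\mat A}^+\D^{\mat A}\arr f_1&=\M_{\mat A}^+\s^L\arr g-\arr g,\\
\M_{\mat A}^+\D^{\mat A}\arr f_2&=\arr g-\M_{\mat A}^-\s^L\arr g.
\end{align*}
Subtracting and using the conormal jump relation $\M_{\mat A}^+\s^L\arr g+\M_{\mat A}^-\s^L\arr g=\arr g$ from~(4) then yields $\M_{\mat A}^+\D^{\mat A}(\arr f_2-\arr f_1)=\arr g$, so $\arr f=\Tr_{m-1}^-u_- -\Tr_{m-1}^+u_+$ is the required preimage, and $\D^{\mat A}\arr f$ realizes the solutions as double layer potentials.

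For the quantitative statement, assume first the bound $\doublebar{\arr f}_\DD\leq C\doublebar{\M_{\mat A}\D^{\mat A}\arr f}_\NN$. Given a solution $u_+$ in $\R^\dmn_+$, I would use~(6) to write $u_+=-\D^{\mat A}(\Tr_{m-1}^+u_+)+\s^L(\M_{\mat A}^+u_+)$, bound the single layer term by $\doublebar{\M_{\mat A}^+u_+}_\NN$ using~(2), bound the double layer term by $\doublebar{\Tr_{m-1}^+u_+}_\DD$ using~(3), and then bound $\doublebar{\Tr_{m-1}^+u_+}_\DD$ by $\doublebar{\M_{\mat A}^+\D^{\mat A}(\Tr_{m-1}^+u_+)}_\NN$ by hypothesis; since $\M_{\mat A}^+\D^{\mat A}(\Tr_{m-1}^+u_+)=-\M_{\mat A}^+u_+ +\M_{\mat A}^+\s^L(\M_{\mat A}^+u_+)$ and $\M_{\mat A}^+\s^L$ is bounded $\NN\mapsto\NN$ (by~(1) and~(2)), this is $\le C\doublebar{\M_{\mat A}^+u_+}_\NN$, and the argument in $\R^\dmn_-$ is identical. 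Conversely, assuming $\doublebar{u_\pm}_{\XX^\pm}\le C\doublebar{\M_{\mat A}^\pm u_\pm}_\NN$ for all solutions, I would take $\arr f\in\DD$, set $u_\pm=\D^{\mat A}\arr f$ on $\R^\dmn_\pm$, note $\arr f=\Tr_{m-1}^-u_- -\Tr_{m-1}^+u_+$ from the Dirichlet jump relation in~(5), estimate each trace by $\doublebar{u_\pm}_{\XX^\pm}$ via~(1), then by $\doublebar{\M_{\mat A}^\pm u_\pm}_\NN$ by hypothesis, and finally collapse $\M_{\mat A}^-\D^{\mat A}\arr f=-\M_{\mat A}^+\D^{\mat A}\arr f$ via~(5) to obtain $\doublebar{\arr f}_\DD\le C\doublebar{\M_{\mat A}\D^{\mat A}\arr f}_\NN$.

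I expect the main obstacle to be the converse half of the surjectivity statement: one has to see that the correct preimage is the \emph{difference} $\Tr_{m-1}^-u_- -\Tr_{m-1}^+u_+$ rather than either trace alone, and that the right-hand side collapses to exactly $\arr g$ only because \emph{both} conormal jump relations in~(4) and~(5) are invoked, in combination with the deliberate sign asymmetry $\pm\M_{\mat A}^\pm u_\pm=\arr g$ built into the hypothesis. Everything else is bookkeeping with the continuity of the boundary maps in~(1), the boundedness of $\s^L$ and $\D^{\mat A}$ from~(2)--(3), and the fact that $\D^{\mat A}\arr f$ solves $Lu=0$ in each half-space.
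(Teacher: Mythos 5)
The paper does not prove Theorem~\ref{C:thm:invertible}; it is cited from \cite{Bar17p} (listed as ``in preparation''), so there is no in-text proof to compare against. That said, your argument is the standard combination of the classical method of layer potentials (for the easy directions) with the Verchota-style Green's-formula-plus-jump-relations argument (for the harder directions), which is precisely what the introduction describes \cite{Bar17p} as doing; the sign bookkeeping in your converse surjectivity argument is correct, and setting $\arr f=\Tr_{m-1}^-u_--\Tr_{m-1}^+u_+$ is indeed the key observation. One small but genuine caveat: condition~(1) as stated only asserts \emph{membership} of the boundary values in $\DD$ and $\NN$, not \emph{boundedness} of $\Tr_{m-1}^\pm$ and $\M_{\mat A}^\pm$ as maps from $\{u\in\XX^\pm:Lu=0\}$ to $\DD$ and $\NN$, yet your quantitative direction uses boundedness repeatedly (to control $\doublebar{\Tr_{m-1}^\pm u}_\DD$ by $\doublebar{u}_{\XX^\pm}$, and to bound $\M_{\mat A}^+\s^L$ as an operator $\NN\to\NN$). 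You flag this explicitly as a standing assumption, and in the paper's applications (Lemmas~\ref{C:lem:invertible:1} and~\ref{C:lem:invertible:2}) the quantitative versions are supplied by Theorems~\ref{C:thm:Dirichlet:1}--\ref{C:thm:Neumann:2}, so this is consistent with how the theorem is actually used, but a careful statement of the hypothesis would make~(1) quantitative. With that understood, the proposal is correct and essentially optimal for this result.
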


Notice that all results must be checked in both the upper and lower half-spaces; this is because of the use of the jump relations. We remark that, by considering the change of variables $(x,t)\mapsto (x,-t)$, all of the results of Sections~\ref{C:sec:trace}, \ref{C:sec:neumann:limit}, \ref{C:sec:green} and~\ref{C:sec:rellich} are valid in the lower half-space as well as the upper half-space.

The jump relations are well known in the second order case. To check conditions~(4) and~(5) of Theorem~\ref{C:thm:invertible}, it will be useful to have the following fact.
\begin{lem}[{\cite{Bar17p}}]\label{C:lem:solution:jump}
Let $L$ be as in Theorem~\ref{C:thm:invertible}.
Let $\arr f\in \dot W\!A^2_{m-1,1/2}(\R^n)$ and let $\arr g\in (\dot W\!A^2_{m-1,1/2}(\R^n))^*$. Then the jump relations of conditions~(4) and~(5) are valid.
\end{lem}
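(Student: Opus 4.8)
The plan is to reduce all four jump relations to the variational definition~\eqref{C:eqn:newton} of the Newton potential $\Pi^L$, using two elementary facts: that $\Pi^L\arr H$ is a single element of $\dot W^2_m(\R^\dmn)$, so that its trace, and the trace of its $(m-1)$st gradient, from $\R^\dmn_+$ agree with those from $\R^\dmn_-$; and that the restrictions of $\s^L\arr g$ and of $\D^{\mat A}\arr f$ to each of $\R^\dmn_\pm$ are honest $\dot W^2_m$-solutions of $L(\cdot)=0$, so that the weak Neumann formula~\eqref{C:eqn:Neumann:W2} is available on each side. The second fact follows at once from~\eqref{C:eqn:newton}: for $\s^L\arr g=\Pi^L\arr G$ one takes $\arr G$ supported in the half-space opposite to the one in which $L(\cdot)=0$ is tested, and for $\D^{\mat A}\arr f$ one uses that, with $F\in\dot W^2_m(\R^\dmn_+)$ any extension of $\arr f$ as in Lemma~\ref{C:lem:Besov} and $v:=\Pi^L(\1_+\mat A\nabla^m F)\in\dot W^2_m(\R^\dmn)$, definition~\eqref{C:dfn:D:newton} gives $\D^{\mat A}\arr f=-F+v$ on $\R^\dmn_+$ and $\D^{\mat A}\arr f=v$ on $\R^\dmn_-$.

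I would dispatch the Dirichlet jumps of conditions~(4) and~(5) of Theorem~\ref{C:thm:invertible} first. Since $\s^L\arr g=\Pi^L\arr G$ is one function in $\dot W^2_m(\R^\dmn)$, $\Tr_{m-1}^+\s^L\arr g=\Tr_{m-1}^-\s^L\arr g$. For the double layer, the representations above give $\Tr_{m-1}^+\D^{\mat A}\arr f=-\arr f+\Tr_{m-1}^+ v$ and $\Tr_{m-1}^-\D^{\mat A}\arr f=\Tr_{m-1}^- v$; since $v\in\dot W^2_m(\R^\dmn)$ has $\Tr_{m-1}^+ v=\Tr_{m-1}^- v$, subtracting yields $\Tr_{m-1}^+\D^{\mat A}\arr f-\Tr_{m-1}^-\D^{\mat A}\arr f=-\arr f$. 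The fact that $\D^{\mat A}\arr f$, like $\Pi^L\arr H$, is defined only modulo a polynomial of degree $m-1$ is harmless here, since the same normalized representative is used on both sides and the ambiguity cancels in the difference.

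For the Neumann jumps I would fix an arbitrary $\Psi\in\dot W^2_m(\R^\dmn)$, apply~\eqref{C:eqn:Neumann:W2} in $\R^\dmn_+$ and in $\R^\dmn_-$, and add; using $\Tr_{m-1}^+\Psi=\Tr_{m-1}^-\Psi$ this gives
\[\langle\Tr_{m-1}\Psi,\ \M_{\mat A}^+ u+\M_{\mat A}^- u\rangle_{\R^n}=\langle\nabla^m\Psi,\mat A\nabla^m u\rangle_{\R^\dmn_+}+\langle\nabla^m\Psi,\mat A\nabla^m u\rangle_{\R^\dmn_-}\]
for $u=\s^L\arr g$ and for $u=\D^{\mat A}\arr f$, with $\nabla^m u$ meaning the gradient of the restriction to each half-space. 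For $u=\s^L\arr g=\Pi^L\arr G\in\dot W^2_m(\R^\dmn)$ the right side is $\langle\nabla^m\Psi,\mat A\nabla^m u\rangle_{\R^\dmn}=\langle\nabla^m\Psi,\arr G\rangle_{\R^\dmn}$ by~\eqref{C:eqn:newton}, which by the defining property of $\arr G$ is the pairing of $\arr g$ against $\Tr_{m-1}\Psi$; hence $\M_{\mat A}^+\s^L\arr g+\M_{\mat A}^-\s^L\arr g=\arr g$. For $u=\D^{\mat A}\arr f$ the right side collapses to $-\langle\nabla^m\Psi,\mat A\nabla^m F\rangle_{\R^\dmn_+}+\langle\nabla^m\Psi,\mat A\nabla^m v\rangle_{\R^\dmn}$, and since~\eqref{C:eqn:newton} applied to $v=\Pi^L(\1_+\mat A\nabla^m F)$ identifies the second term with $\langle\nabla^m\Psi,\mat A\nabla^m F\rangle_{\R^\dmn_+}$, the two cancel and $\M_{\mat A}^+\D^{\mat A}\arr f+\M_{\mat A}^-\D^{\mat A}\arr f=0$. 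Since $\{\Tr_{m-1}\Psi:\Psi\in\dot W^2_m(\R^\dmn)\}$ is dense in $\dot W\!A^2_{m-1,1/2}(\R^n)$, these identities hold in $(\dot W\!A^2_{m-1,1/2}(\R^n))^*$.

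The arithmetic here is short; the only real care needed is function-space bookkeeping, and I expect that to be the main — though mild — obstacle. One must check that $\s^L\arr g$ and $\D^{\mat A}\arr f$ really do restrict to $\dot W^2_m$-solutions in $\R^\dmn_\pm$, so that $\Tr_{m-1}^\pm$ of each exists in the classical Sobolev sense (as in the remark after~\eqref{C:eqn:Dirichlet}) and so that~\eqref{C:eqn:Neumann:W2} genuinely applies, and one must keep track of the conjugation and polynomial-normalization conventions. All of this is resolved using only the variational characterization~\eqref{C:eqn:newton} of $\Pi^L$ together with the boundedness statement that $\D^{\mat A}$ maps $\dot W\!A^2_{m-1,1/2}(\R^n)$ into $\dot W^2_m(\R^\dmn_\pm)$; nothing genuinely deep enters beyond the results already cited.
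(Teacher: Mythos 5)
Your proof is correct, and the route you take — reducing all four jump relations to the variational characterization~\eqref{C:eqn:newton} of the Newton potential together with the weak Neumann formula~\eqref{C:eqn:Neumann:W2} on each half-space — is precisely the argument the framework of Section~\ref{C:sec:dfn:potentials} is designed to support. The paper itself cites \cite{Bar17p} for this lemma rather than proving it, so there is no in-text proof to compare against, but your argument is the canonical one given the definitions~\eqref{C:dfn:D:newton}, \eqref{C:eqn:D:newton:lower} and \eqref{C:dfn:S:newton}: the Dirichlet jumps follow because $\Pi^L\arr H$ is a single $\dot W^2_m(\R^\dmn)$ function whose one-sided traces coincide (by the remark after~\eqref{C:eqn:Dirichlet}), and the Neumann jumps follow by adding the two one-sided instances of~\eqref{C:eqn:Neumann:W2} and invoking~\eqref{C:eqn:newton}. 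Your handling of the conjugate-linear inner product convention (passing from $\langle\arr G,\nabla^m\Psi\rangle=\langle\arr g,\Tr_{m-1}\Psi\rangle$ to $\langle\nabla^m\Psi,\arr G\rangle=\langle\Tr_{m-1}\Psi,\arr g\rangle$) is correct, and your observation that one may choose $\arr G$ supported opposite to the half-space in which solvability is being tested correctly justifies that $\s^L\arr g$ solves $Lu=0$ on each side. One small tightening: by Lemma~\ref{C:lem:Besov}, the set $\{\Tr_{m-1}\Psi:\Psi\in\dot W^2_m(\R^\dmn)\}$ is not merely dense in $\dot W\!A^2_{m-1,1/2}(\R^n)$ but equal to it, so the final density step is unnecessary.
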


In order to prove Theorem~\ref{C:thm:well-posed:1}, we will need an adjoint relation for layer potentials; again, this result is well known in the second order case and may be easily generalized to the higher order case.
\begin{lem}[{\cite{Bar17p}}]\label{C:lem:solution:adjoint}
Let $L$ and $\mat A$ be as in Theorem~\ref{C:thm:invertible}.
Let $\mat A^*$ be the adjoint matrix, that is, $A^*_{\alpha\beta}=\overline{A_{\beta\alpha}}$. Let $L^*$ be the associated elliptic operator.
Then we have the adjoint relations
\begin{align}
\label{C:eqn:neumann:D:dual}
\langle \arr \varphi, \M_{\mat A}^+ \D^{\mat A} \arr f\rangle_{\R^n} 
&= \langle\M_{\mat A^*}^+ \D^{\mat A^*}  \arr \varphi, \arr f\rangle_{\R^n} 
,\\
\label{C:eqn:dirichlet:S:dual}
\langle \arr \gamma, \Tr_{m-1} \s^{L} \arr g\rangle_{\R^n} 
&= \langle \Tr_{m-1} \s^{L^*} \arr \gamma, \arr g\rangle_{\R^n} 
\end{align}
for all $\arr f$, $\arr \varphi\in \dot W\!A^2_{m-1,1/2}(\R^n)$ and all   $\arr g$, $\arr \gamma\in (\dot W\!A^2_{m-1,1/2}(\R^n))^*$.
\end{lem}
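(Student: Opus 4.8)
The plan is to deduce both adjoint relations from a single adjoint identity for the Newton potential: for all arrays $\arr H,\arr K\in L^2(\R^\dmn)$,
\begin{equation*}
\langle\arr H,\nabla^m\Pi^L\arr K\rangle_{\R^\dmn}=\langle\nabla^m\Pi^{L^*}\arr H,\arr K\rangle_{\R^\dmn}.
\end{equation*}
I would prove this first. Writing $u=\Pi^L\arr K$ and $v=\Pi^{L^*}\arr H$, the definition~\eqref{C:eqn:newton} (for $L$ and for $L^*$) gives $\langle\nabla^m\psi,\mat A\nabla^m u\rangle_{\R^\dmn}=\langle\nabla^m\psi,\arr K\rangle_{\R^\dmn}$ and $\langle\nabla^m\psi,\mat A^*\nabla^m v\rangle_{\R^\dmn}=\langle\nabla^m\psi,\arr H\rangle_{\R^\dmn}$ for every $\psi\in\dot W^2_m(\R^\dmn)$; testing the first with $\psi=v$ and the second with $\psi=u$, and using the pointwise identities $\langle\arr P,\mat A\arr Q\rangle=\langle\mat A^*\arr P,\arr Q\rangle$ and $\langle\arr P,\arr Q\rangle=\overline{\langle\arr Q,\arr P\rangle}$, one gets $\langle\nabla^m v,\arr K\rangle_{\R^\dmn}=\langle\nabla^m v,\mat A\nabla^m u\rangle_{\R^\dmn}=\overline{\langle\nabla^m u,\arr H\rangle_{\R^\dmn}}$, which is the claim. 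This is just the symmetry underlying Lax--Milgram and should be painless.

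For the single layer relation~\eqref{C:eqn:dirichlet:S:dual}, write $\s^L\arr g=\Pi^L\arr G$ and $\s^{L^*}\arr\gamma=\Pi^{L^*}\arr\Gamma$ as in~\eqref{C:dfn:S:newton}, where $\arr G,\arr\Gamma\in L^2(\R^\dmn)$ satisfy $\langle\arr G,\nabla^m\psi\rangle_{\R^\dmn}=\langle\arr g,\Tr_{m-1}\psi\rangle_{\R^n}$ and $\langle\arr\Gamma,\nabla^m\psi\rangle_{\R^\dmn}=\langle\arr\gamma,\Tr_{m-1}\psi\rangle_{\R^n}$ for all $\psi\in\dot W^2_m(\R^\dmn)$. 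Since $\s^L\arr g$ and $\s^{L^*}\arr\gamma$ themselves lie in $\dot W^2_m(\R^\dmn)$, I can feed them back in as test functions: using $\psi=\s^L\arr g$ in the relation for $\arr\Gamma$ gives $\langle\arr\gamma,\Tr_{m-1}\s^L\arr g\rangle_{\R^n}=\langle\arr\Gamma,\nabla^m\Pi^L\arr G\rangle_{\R^\dmn}$; the Newton adjoint identity rewrites this as $\langle\nabla^m\Pi^{L^*}\arr\Gamma,\arr G\rangle_{\R^\dmn}=\langle\nabla^m\s^{L^*}\arr\gamma,\arr G\rangle_{\R^\dmn}$; and using $\psi=\s^{L^*}\arr\gamma$ in the relation for $\arr G$, then conjugating, turns this into $\langle\Tr_{m-1}\s^{L^*}\arr\gamma,\arr g\rangle_{\R^n}$. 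The trace here is two-sided and unambiguous by Lemma~\ref{C:lem:solution:jump}.

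For the double layer relation~\eqref{C:eqn:neumann:D:dual}, by Lemma~\ref{C:lem:Besov} (together with a reflection to obtain whole-space extensions) choose $F,\Phi\in\dot W^2_m(\R^\dmn)$ with $\Tr_{m-1}F=\arr f$ and $\Tr_{m-1}\Phi=\arr\varphi$, so that by~\eqref{C:dfn:D:newton} (and the identity $\Pi^L(\mat A\nabla^m F)=F$) we have $\D^{\mat A}\arr f=-\1_+F+\Pi^L(\1_+\mat A\nabla^m F)$ and $\D^{\mat A^*}\arr\varphi=-\1_+\Phi+\Pi^{L^*}(\1_+\mat A^*\nabla^m\Phi)$, and in particular their restrictions to $\R^\dmn_+$ lie in $\dot W^2_m(\R^\dmn_+)$ and solve the relevant equation there. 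Applying the weak formula~\eqref{C:eqn:Neumann:W2} twice — for $L$ with test function $\Phi$, and for $L^*$ with test function $F$ — yields $\langle\arr\varphi,\M_{\mat A}^+\D^{\mat A}\arr f\rangle_{\R^n}=\langle\nabla^m\Phi,\mat A\nabla^m\D^{\mat A}\arr f\rangle_{\R^\dmn_+}$ and $\langle\M_{\mat A^*}^+\D^{\mat A^*}\arr\varphi,\arr f\rangle_{\R^n}=\langle\nabla^m\D^{\mat A^*}\arr\varphi,\mat A\nabla^m F\rangle_{\R^\dmn_+}$. Substituting the two formulas, the $-\1_+F$ and $-\1_+\Phi$ pieces each contribute the common term $-\langle\nabla^m\Phi,\mat A\nabla^m F\rangle_{\R^\dmn_+}$, which cancels, leaving $\langle\nabla^m\Phi,\mat A\nabla^m\Pi^L(\1_+\mat A\nabla^m F)\rangle_{\R^\dmn_+}=\langle\nabla^m\Pi^{L^*}(\1_+\mat A^*\nabla^m\Phi),\mat A\nabla^m F\rangle_{\R^\dmn_+}$. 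Setting $\arr K=\1_+\mat A\nabla^m F$ and $\arr H=\1_+\mat A^*\nabla^m\Phi$ — arrays in $L^2(\R^\dmn)$ supported in $\overline{\R^\dmn_+}$, equal respectively to $\mat A\nabla^m F$ and $\mat A^*\nabla^m\Phi$ on $\R^\dmn_+$ — and moving $\mat A^*$ across one inner product, both sides become pairings over all of $\R^\dmn$, and the identity is exactly the Newton adjoint relation applied to $\arr H$ and $\arr K$.

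Once organized this way the argument is short, and I expect the only real difficulties to be bookkeeping ones. First, one must keep the complex-conjugation conventions straight — both for the $\mat A$ versus $\mat A^*$ inner products and for the duality pairing on $\R^n$ written in the two orders — so that each claimed equality holds exactly, not merely up to a conjugate. Second, one must dispatch the soft points that make the machinery applicable: that $\D^{\mat A}\arr f$ restricted to $\R^\dmn_+$ genuinely lies in $\dot W^2_m(\R^\dmn_+)$ and solves $L=0$ there (so that~\eqref{C:eqn:Neumann:W2} applies), and that the extensions $F,\Phi$ may be taken in $\dot W^2_m(\R^\dmn)$ rather than merely in $\dot W^2_m(\R^\dmn_\pm)$ — which is where $\Pi^L(\mat A\nabla^m F)=F$ and the equivalence of the two representations~\eqref{C:dfn:D:newton} and~\eqref{C:eqn:D:newton:lower} of $\D^{\mat A}$ come in.
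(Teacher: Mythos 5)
The paper does not include a proof of this lemma; it cites it from \cite{Bar17p} (listed as in preparation), so there is no in-paper argument to compare against. Taking the proposal on its own merits: it is correct. The Newton-potential adjoint identity is verified exactly as you state (the chain $\langle\nabla^m v,\arr K\rangle=\langle\nabla^m v,\mat A\nabla^m u\rangle=\langle\mat A^*\nabla^m v,\nabla^m u\rangle=\overline{\langle\nabla^m u,\arr H\rangle}=\langle\arr H,\nabla^m u\rangle$ uses nothing but the two defining variational identities and the conjugate-symmetry of the inner product); the single-layer case follows by feeding $\s^L\arr g$ and $\s^{L^*}\arr\gamma$ back into the defining relations for $\arr\Gamma$ and $\arr G$; and in the double-layer case the two applications of \eqref{C:eqn:Neumann:W2} produce the identical cross term $-\langle\nabla^m\Phi,\mat A\nabla^m F\rangle_{\R^\dmn_+}$ on each side, which cancels, after which moving $\mat A^*$ across and recognizing $\arr H=\1_+\mat A^*\nabla^m\Phi$, $\arr K=\1_+\mat A\nabla^m F$ reduces the remainder to the Newton identity. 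The soft points you flag are genuinely the only ones: that the restriction of $\D^{\mat A}\arr f$ to $\R^\dmn_+$ lies in $\dot W^2_m(\R^\dmn_+)$ and solves $Lu=0$ there (both immediate from \eqref{C:dfn:D:newton} and the defining property of $\Pi^L$, so that \eqref{C:eqn:Neumann:W2} applies), and that $F,\Phi$ may be taken in $\dot W^2_m(\R^\dmn)$ so they serve simultaneously as extensions in \eqref{C:dfn:D:newton} and as test functions in \eqref{C:eqn:Neumann:W2} (a reflection of the Lemma~\ref{C:lem:Besov} extension does this). The conjugation and index-swap bookkeeping in $\langle\arr P,\mat A\arr Q\rangle=\langle\mat A^*\arr P,\arr Q\rangle$ is consistent with the paper's convention that the first slot of $\langle\cdot,\cdot\rangle$ carries the conjugate.
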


\subsection{Proofs of the main theorems}
\label{C:sec:invertible:application}

In order to apply Theorem~\ref{C:thm:invertible}, we must show that the boundary and solution spaces of Theorems~\ref{C:thm:well-posed:2} and~\ref{C:thm:well-posed:1} satisfy the given conditions. We will do so in the following two lemmas.

\begin{lem}\label{C:lem:invertible:1} If $L$ is as in Theorem~\ref{C:thm:invertible}, then the spaces
\begin{gather*}
\XX^\pm =\biggl\{v:\int_{\R^\dmn_\pm} \abs{\nabla^m v(x,t)}^2\abs{t}\,dx\,dt<\infty,\>
\sup_{\pm t>0}\doublebar{\nabla^{m-1} v(\,\cdot\,,t)}_{L^2(\R^n)}<\infty\biggr\}, \\
\DD = \dot W\!A^2_{m-1,0}(\R^n),\qquad
\NN = (\dot W\!A^2_{m-1,1}(\R^n))^*
\end{gather*}
satisfy the conditions of Theorem~\ref{C:thm:invertible}.
\end{lem}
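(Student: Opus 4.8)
The plan is to verify, one at a time, the six conditions of Theorem~\ref{C:thm:invertible} for the stated $\XX^\pm$, $\DD$ and $\NN$, drawing on the trace theorems of Section~\ref{C:sec:trace}, the layer potential bounds of Section~\ref{C:sec:potentials:bounds}, the Green's formula (Theorem~\ref{C:thm:green}), and the jump relations (Lemma~\ref{C:lem:solution:jump}). Every item must be checked in both half-spaces, but by the change of variables $(x,t)\mapsto(x,-t)$ the cited results hold in $\R^\dmn_-$ as well, so it suffices to argue in $\R^\dmn_+$. I will also use that $\NN=(\dot W\!A^2_{m-1,1}(\R^n))^*$ is represented, non-uniquely, by arrays of elements of $\dot W^2_{-1}(\R^n)$ (any such array restricts to a bounded functional on $\dot W\!A^2_{m-1,1}(\R^n)$, and every functional has such a representative), so that the estimates of Section~\ref{C:sec:potentials:bounds}, stated there for data in $\dot W^2_{-1}(\R^n)$, may be read in the $\NN$ norm.

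For the first condition, let $u\in\XX^+$ with $Lu=0$ in $\R^\dmn_+$. Then $u$ satisfies the hypotheses of Theorem~\ref{C:thm:Dirichlet:1}, so $\Tr_{m-1}^+u$ exists and $\nabla^{m-1}u(\,\cdot\,,t)\to\Tr_{m-1}^+u$ in $L^2(\R^n)$; the polynomial $P$ produced there has $\nabla^{m-1}P$ both a polynomial array and (by the uniform bound built into the definition of $\XX^+$ together with the triangle inequality) an $L^2(\R^n)$ function, hence $\nabla^{m-1}P=0$, so that $\Tr_{m-1}^+u\in L^2(\R^n)$ with $\doublebar{\Tr_{m-1}^+u}_{L^2(\R^n)}\leq C\doublebar{u}_{\XX^+}$. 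Since the entries of $\Tr_{m-1}^+u$ are the boundary traces of the derivatives of one function, they satisfy the Whitney compatibility relations, so $\Tr_{m-1}^+u\in\dot W\!A^2_{m-1,0}(\R^n)=\DD$. The same hypotheses let me apply Theorem~\ref{C:thm:Neumann:1}, whose estimate says precisely that $\M_{\mat A}^+u$ extends to a bounded linear functional on $\dot W\!A^2_{m-1,1}(\R^n)$, i.e.\ $\M_{\mat A}^+u\in\NN$.

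The second, third, and sixth conditions are then short. For $\arr g\in\NN$, the bound \eqref{C:eqn:S:square:variant} controls $\int_{\R^\dmn_+}\abs{\nabla^m\s^L\arr g}^2\,t\,dx\,dt$ and \eqref{C:eqn:S:L2} controls $\sup_{t>0}\doublebar{\nabla^{m-1}\s^L\arr g(\,\cdot\,,t)}_{L^2(\R^n)}$, so $\s^L$ is bounded $\NN\mapsto\XX^\pm$; for $\arr f\in\DD\subset L^2(\R^n)$, the bounds \eqref{C:eqn:D:square:rough} and \eqref{C:eqn:D:L2} give $\D^{\mat A}$ bounded $\DD\mapsto\XX^\pm$. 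For the Green's formula, any $u\in\XX^+$ with $Lu=0$ satisfies the hypotheses of Theorem~\ref{C:thm:Dirichlet:1}, so Theorem~\ref{C:thm:green} yields $\1_+\nabla^m u=-\nabla^m\D^{\mat A}(\Tr_{m-1}^+u)+\nabla^m\s^L(\M_{\mat A}^+u)$; both sides are $m$th gradients of elements of $\XX^+$ whose $\nabla^{m-1}$ decays at infinity (for $u$ this is the vanishing of $P$ above; for the layer potentials it is their normalization in Section~\ref{C:sec:Dirichlet:layer}), so this lifts to the asserted identity in $\XX^+$.

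The two steps that I expect to require real care are the first condition---confirming that $\Tr_{m-1}^+u$ lands in the \emph{completion} $\dot W\!A^2_{m-1,0}(\R^n)$ and not merely in $L^2(\R^n)$, which rests on identifying $\dot W\!A^2_{m-1,0}(\R^n)$ with the space of $L^2$ Whitney arrays---and the fourth and fifth conditions, the jump relations. Lemma~\ref{C:lem:solution:jump} supplies these only for $\arr f\in\dot W\!A^2_{m-1,1/2}(\R^n)$ and $\arr g\in(\dot W\!A^2_{m-1,1/2}(\R^n))^*$, so I would extend them by density: the arrays $\Tr_{m-1}\varphi$ with $\varphi\in C^\infty_0(\R^\dmn)$ are dense in $\DD$ and lie in $\dot W\!A^2_{m-1,1/2}(\R^n)$, while $\NN$ contains a dense subspace (for instance arrays of the form $\nabla_\pureH\cdot\vec h$ with $\vec h$ a Schwartz array) lying in $(\dot W\!A^2_{m-1,1/2}(\R^n))^*$; since $\Tr_{m-1}^\pm$, $\M_{\mat A}^\pm$, $\D^{\mat A}$ and $\s^L$ are all continuous between the relevant spaces by the first three conditions, the jump relations pass to the limit and hence hold on all of $\DD$ and $\NN$. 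The main obstacle is therefore not any single estimate but keeping the web of spaces $\XX^\pm$, $\DD$, $\NN$, $\dot W\!A^2_{m-1,0}$, $\dot W\!A^2_{m-1,1/2}$, $\dot W^2_{-1}$ and their polynomial normalizations mutually consistent throughout.
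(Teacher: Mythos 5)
Your argument is correct and follows essentially the same route as the paper's own proof: condition~(1) from Theorems~\ref{C:thm:Dirichlet:1} and~\ref{C:thm:Neumann:1}, conditions~(2)--(3) from Theorem~\ref{C:thm:square:rough} (with the normalization of Section~\ref{C:sec:Dirichlet:layer} giving the $\sup_t$ bound), condition~(6) from Theorem~\ref{C:thm:green}, and conditions~(4)--(5) from Lemma~\ref{C:lem:solution:jump} on a dense subclass together with the continuity furnished by conditions~(1)--(3). The only difference is that you spell out the details the paper suppresses (identifying $\nabla^{m-1}P=0$ via the uniform $L^2$ bound, and naming a concrete dense subclass of $\NN$), which is consistent with what the authors implicitly intend.
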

\begin{proof}
Condition~(1) follows from Theorems~\ref{C:thm:Dirichlet:1} and~\ref{C:thm:Neumann:1}. Conditions~(2) and~(3) follow from Theorem~\ref{C:thm:square:rough}. The jump relations of conditions (4) and~(5) are true for $\arr f$ and $\arr g$ in dense subspaces of $\dot W\!A^2_{m-1,0}(\R^n)$ and $(\dot W\!A^2_{m-1,1}(\R^n))^*$; conditions~(1--3) imply that conditions~(4) and~(5) are true by density. Condition~(6) is valid by Theorem~\ref{C:thm:green}.
\end{proof}

\begin{lem}\label{C:lem:invertible:2} If $L$ is as in Theorem~\ref{C:thm:invertible}, then the spaces
\begin{gather*}
\XX^\pm =\biggl\{w:\int_{\R^\dmn_\pm} \abs{\nabla^m \partial_t w(x,t)}^2\abs{t}\,dx\,dt<\infty,\>\doublebar{\nabla^m w(\,\cdot\,,1)}_{L^2(\R^n)}<\infty\biggr\}, \\
\DD = \dot W\!A^2_{m-1,1}(\R^n),\qquad
\NN = (\dot W\!A^2_{m-1,0}(\R^n))^*
\end{gather*}
satisfy the conditions of Theorem~\ref{C:thm:invertible}.
\end{lem}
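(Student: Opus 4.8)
The plan is to follow the proof of Lemma~\ref{C:lem:invertible:1} essentially verbatim, with Theorems~\ref{C:thm:Dirichlet:2} and~\ref{C:thm:Neumann:2} (the trace results on the ``$\partial_t$'' square-function scale) replacing Theorems~\ref{C:thm:Dirichlet:1} and~\ref{C:thm:Neumann:1}, and with Theorem~\ref{C:thm:square} replacing Theorem~\ref{C:thm:square:rough}. The only feature that is not a routine transcription is the interaction of condition~(1) with the normalization array $\arr p$ of Theorem~\ref{C:thm:Dirichlet:2}: the finiteness condition $\doublebar{\nabla^m w(\,\cdot\,,1)}_{L^2(\R^n)}<\infty$ imposed in the definition of $\XX^\pm$ is precisely the hypothesis ``$\nabla^m w(\,\cdot\,,t)\in L^2(\R^n)$ for some $t>0$'' under which Theorem~\ref{C:thm:Dirichlet:2} guarantees $\arr p=0$, so this is the step at which the particular choice of $\XX^\pm$ matters.

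First I would check condition~(1). If $w\in\XX^+$ with $Lw=0$ in $\R^\dmn_+$, then $w$ meets the hypotheses of Theorems~\ref{C:thm:Dirichlet:2} and~\ref{C:thm:Neumann:2}, and since $\nabla^m w(\,\cdot\,,1)\in L^2(\R^n)$ these theorems give $\arr p=0$, so that $\Tr_m w$ exists in $L^2(\R^n)$ with norm controlled by $(\int_{\R^\dmn_+}\abs{\nabla^m\partial_t w}^2\,t)^{1/2}$; since the components of $\Tr_m w$ include the horizontal derivatives of those of $\Tr_{m-1}w$, we obtain $\nabla_\pureH\Tr_{m-1}^+w\in L^2(\R^n)$. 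To conclude $\Tr_{m-1}^+w\in\dot W\!A^2_{m-1,1}(\R^n)=\DD$ I would approximate by the translates $w_\sigma(x,t)=w(x,t+\sigma)\in\dot W^2_m(\R^\dmn_+)$ (whose traces lie in $\dot W\!A^2_{m-1,1/2}(\R^n)$), followed by a mollification-and-cutoff to land in $\mathfrak{D}$, using the uniform $L^2$ bound on $\nabla^m w(\,\cdot\,,t)$ from Theorem~\ref{C:thm:Dirichlet:2} to pass to the limit in $\dot W^2_1$. Finally Theorem~\ref{C:thm:Neumann:2} bounds $\Tr_{m-1}\varphi\mapsto\langle\M_{\mat A}^+w,\Tr_{m-1}\varphi\rangle_{\R^n}$ in the $L^2$ norm, so $\M_{\mat A}^+w$ extends to an element of $(\dot W\!A^2_{m-1,0}(\R^n))^*=\NN$. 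The lower half-space statements follow from $(x,t)\mapsto(x,-t)$, as in Section~\ref{C:sec:trace}.

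The remaining conditions go through as in Lemma~\ref{C:lem:invertible:1}. Conditions~(2) and~(3) amount to the boundedness of $\s^L:\NN\to\XX^\pm$ and $\D^{\mat A}:\DD\to\XX^\pm$; these combine the square-function estimates \eqref{C:eqn:S:square} and~\eqref{C:eqn:D:square} of Theorem~\ref{C:thm:square} with the uniform estimates \eqref{C:eqn:S:L2:smooth} and~\eqref{C:eqn:D:L2:smooth} of Section~\ref{C:sec:Dirichlet:layer}, where for $\s^L$ one uses that it annihilates the $L^2$ arrays representing $0$ in $\NN$ (so one may test the $L^2$-valued bound against the minimal-norm representative of $\arr g\in\NN$). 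Condition~(6), the Green's formula, is exactly Theorem~\ref{C:thm:green}, which applies to any $w\in\XX^\pm$ with $Lw=0$ since such $w$ satisfies the hypotheses of Theorem~\ref{C:thm:Neumann:2}. For conditions~(4) and~(5), the jump relations hold for $\arr f\in\dot W\!A^2_{m-1,1/2}(\R^n)$ and $\arr g\in(\dot W\!A^2_{m-1,1/2}(\R^n))^*$ by Lemma~\ref{C:lem:solution:jump}; these are dense in $\DD$ and $\NN$, and conditions~(1)--(3) make all four operators $\Tr_{m-1}^\pm\s^L$, $\M_{\mat A}^\pm\s^L$, $\Tr_{m-1}^\pm\D^{\mat A}$, $\M_{\mat A}^\pm\D^{\mat A}$ continuous on these spaces, so the identities extend by density.

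The main obstacle I expect is the $\dot W\!A^2_{m-1,1}(\R^n)$ membership in condition~(1): knowing merely $\nabla_\pureH\Tr_{m-1}^+w\in L^2(\R^n)$ does not by itself place the trace in the completion of $\mathfrak{D}$, so one must carry out the translate/mollify/truncate approximation carefully, checking that the Whitney compatibility conditions among the components are preserved under the cutoff and that the $L^2$ bound on $\Tr_m w$ coming from Theorem~\ref{C:thm:Dirichlet:2} survives it. Everything else is bookkeeping parallel to Lemma~\ref{C:lem:invertible:1}.
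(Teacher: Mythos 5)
Your proposal is correct and follows the same approach the paper uses: condition~(1) from Theorems~\ref{C:thm:Dirichlet:2} and~\ref{C:thm:Neumann:2}, conditions~(2)--(3) from Theorem~\ref{C:thm:square} together with the uniform bounds from Section~\ref{C:sec:Dirichlet:layer}, conditions~(4)--(5) by density from Lemma~\ref{C:lem:solution:jump} using the continuity established in conditions~(1)--(3), and condition~(6) from Theorem~\ref{C:thm:green}. The paper states this in three sentences without spelling out the approximation argument for landing in $\dot W\!A^2_{m-1,1}(\R^n)$ or the role of the $\doublebar{\nabla^m w(\,\cdot\,,1)}_{L^2}<\infty$ hypothesis in forcing $\arr p=0$, so you have simply supplied detail the paper leaves implicit.
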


\begin{proof} Condition~(1) follows from Theorems~\ref{C:thm:Dirichlet:2} and~\ref{C:thm:Neumann:2}. Conditions (2) and~(3) follow from Theorem~\ref{C:thm:square}. The jump relations of conditions (4) and~(5) are true for $\arr f$ and $\arr g$ in dense subspaces of $\dot W\!A^2_{m-1,1}(\R^n)$ and $(\dot W\!A^2_{m-1,0}(\R^n))^*$; Conditions~(1--3) imply that Conditions~(4) and~(5) are true by density.
Condition~(6) is valid by Theorem~\ref{C:thm:green}.
\end{proof}

We now prove our main theorems.

\begin{proof}[Proof of Theorem~\ref{C:thm:well-posed:2}]
By Theorem~\ref{C:thm:invertible}, Lem\-ma~\ref{C:lem:invertible:2} and Theorem~\ref{C:thm:neumann:unique}, if $\mat A$ is as in Theorem~\ref{C:thm:rellich}, then the operators $\M_{\mat A}^\pm\D^{\mat A}$ 
satisfies the estimate
\begin{equation*}\doublebar{\arr f}_{\dot W\!A^2_{m-1,1}(\R^n)}\leq C \doublebar{\M_{\mat A}^\pm \D^{\mat A}\arr f}_{(\dot W\!A^2_{m-1,0}(\R^n))^*}.\end{equation*}
We need only show that this operator is surjective to complete the proof of Theorem~\ref{C:thm:well-posed:2}.

By Theorem~\ref{C:thm:special} there is a coefficient matrix $\mat A_0$ such that solutions to the Neumann problem exist, and so $\M_{\mat A_0}^+\D^{\mat A_0}$ is onto $\dot W\!A^2_{m-1,1}(\R^n)\mapsto {(\dot W\!A^2_{m-1,0}(\R^n))^*}$.

Choose some $\mat A$. Let $\mat A_s=(1-s)\mat A_0+s\mat A$. Observe that $\mat A_s$ is self-adjoint, bounded and elliptic, uniformly in $0\leq s\leq 1$. 

Let $\M_s=\M_{\mat A_s}^+\D^{\mat A_s}$. Then there exists some constants $C_0$ and $C_1$ depending on the ellipticity constants of $\mat A$ and $\mat A_0$ such that 
\begin{equation*}
\frac{1}{C_0}\doublebar{\arr f}_{\dot W\!A^2_{m-1,1}(\R^n)}
\leq \doublebar{\M_s\arr f}_{(\dot W\!A^2_{m-1,0}(\R^n))^*}\leq C_1\doublebar{\arr f}_{\dot W\!A^2_{m-1,1}(\R^n)}\end{equation*}
for all $\arr f\in{\dot W\!A^2_{m-1,1}(\R^n)}$ and all $0\leq s\leq 1$.

By analytic perturbation theory, if $0\leq r\leq s\leq 1$ and $\abs{s-r}$ is small enough (depending only on~$C_1$), then
\begin{equation*}\doublebar{\M_s - \M_r} \leq C\abs{s-r}\end{equation*}
where, again, the constant $C$ depends only on~$C_1$, and where the given norm is the operator norm ${\dot W\!A^2_{m-1,1}(\R^n)}\mapsto {(\dot W\!A^2_{m-1,0}(\R^n))^*}$.

Suppose that $\M_r$ is onto (and thus is bijective). Its inverse has operator norm at most~$C_0$. Let $\abs{s-r}=1/N$ for some integer~$N$; we may choose $N$ large enough, depending only on $C_0$ and~$C_1$, such that 
\begin{equation*}\doublebar{\M_s - \M_r} \leq \frac{1}{2C_0}\leq \frac{1}{2\doublebar{\M_r^{-1}}}.\end{equation*}
Now, consider the operator
\begin{equation*}\M
= \sum_{j=0}^\infty \M_r^{-1}[(\M_r-\M_s)\M_r^{-1}]^j.\end{equation*}
The sum converges to a bounded operator defined on all of ${(\dot W\!A^2_{m-1,0}(\R^n))^*}$. But
\begin{align*}\M_s \M 
&= (\M_s-\M_r)\M+\M_r\M 
\\&= \sum_{j=0}^\infty [(\M_r-\M_s)\M_r^{-1}]^j-\sum_{j=0}^\infty [(\M_r-\M_s)\M_r^{-1}]^{j+1}
=I\end{align*}
is the identity, and so $\M=\M_s^{-1}$ and $\M_s$ is surjective as well. Thus, since $\M_0$ is surjective, by working in small steps we see that $\M_1=\M_{\mat A}\D^{\mat A}$ is surjective for any bounded self-adjoint elliptic matrix~$\mat A$.

Thus, $\M_{\mat A}^+\D^{\mat A}$ is invertible from ${\dot W\!A^2_{m-1,1}(\R^n)}$ to $ {(\dot W\!A^2_{m-1,0}(\R^n))^*} $, and so the Neumann problem \eqref{C:eqn:neumann:regular} is well posed for coefficients as in Theorem~\ref{C:thm:well-posed:2}. This completes the proof.
\end{proof}

\begin{proof}[Proof of Theorem~\ref{C:thm:perturbation}] Suppose that $\mat A_0$ is as in Theorem~\ref{C:thm:perturbation}. Then by Theorem~\ref{C:thm:invertible} and Lemma~\ref{C:lem:invertible:1} or~\ref{C:lem:invertible:2}, 
\[\M_{\mat A_0}^+\D^{\mat A_0}:\dot W\!A^2_{m-1,0}\mapsto (\dot W\!A^2_{m-1,1})^*,\qquad \M_{\mat A_0}^+\D^{\mat A_0}:\dot W\!A^2_{m-1,1}\mapsto(\dot W\!A^2_{m-1,0})^*\]
are invertible mappings. If $\mat A$ is $t$-independent and sufficiently close to $\mat A_0$, then $\mat A$ is also elliptic, and so by the bounds~\eqref{C:eqn:Neumann:D:1} and~\eqref{C:eqn:Neumann:D:2} $\M_{\mat A}^+\D^{\mat A}$ is bounded as mappings between the same two pairs of spaces. Thus, by analytic perturbation theory, if $\mat A$ is sufficiently close to $\mat A_0$, then $\M_{\mat A}^+\D^{\mat A}$ is also invertible, and so the Neumann problem \eqref{C:eqn:neumann:regular} or~\eqref{C:eqn:neumann:rough} is well posed, as desired.
\end{proof}

\begin{proof}[Proof of Theorem~\ref{C:thm:well-posed:1}]
Observe that by the duality relation \eqref{C:eqn:neumann:D:dual} (true in dense subsets of the relevant spaces), 
\[\M_{\mat A}^+\D^{\mat A}:\dot W\!A^2_{m-1,0}(\R^n)\mapsto (\dot W\!A^2_{m-1,1}(\R^n))^*\]
is invertible if and only if 
\[\M_{\mat A^*}^+\D^{\mat A^*}:\dot W\!A^2_{m-1,1}(\R^n)\mapsto (\dot W\!A^2_{m-1,0}(\R^n))^*\]
is invertible. Thus, Theorems~\ref{C:thm:well-posed:2} and~\ref{C:thm:well-posed:1} are equivalent.
\end{proof}

\begin{rmk}\label{C:rmk:HofKMP15B} Under the conditions of Theorem~\ref{C:thm:invertible}, invertibility of $\Tr_{m-1}^+\s^L$  is equivalent to well posedness of the Dirichlet problem. See \cite{Bar17p}.

Thus, as in the proof of Theorem~\ref{C:thm:well-posed:1} (using the duality relation \eqref{C:eqn:dirichlet:S:dual} in place of the relation~\eqref{C:eqn:neumann:D:dual}), for elliptic $t$-independent coefficients, well posedness of the Dirichlet problem, with coefficients $\mat A$, boundary data in $\dot W\!A^2_{m-1,0}(\R^n)$, and solutions as in Lemma~\ref{C:lem:invertible:1}, implies well posedness of the Dirichlet problem with coefficients $\mat A^*$, boundary data in $\dot W\!A^2_{m-1,1}(\R^n)$, and solutions as in Lemma~\ref{C:lem:invertible:2}. (The Dirichlet problem with boundary data in $\dot W\!A^p_{m-1,1}$ rather than $\dot W\!A^p_{m-1,0}$ is known in the theory as the ``Dirichlet regularity'' problem.) 

The main result of \cite{HofKMP15B} is that for second order $t$-independent operators, well posedness of the Dirichlet problem with coefficients $\mat A$ and boundary data in $L^p(\R^n)$ implies well posedness of the regularity problem with coefficients $\mat A^*$ and boundary data in $\dot W^{p'}_1(\R^n)$ for $1/p+1/p'=1$, provided $2-\varepsilon<p<\infty$. Indeed they prove that solutions may be represented as the single layer potential of some appropriate input function. Notice that the trace results of \cite{BarHM17pB} (in particular Theorem~\ref{C:thm:Neumann:1}) are essential to the argument presented here, and that those theorems were proven using many ideas from \cite{HofKMP15B}; the approach described here may be thought of as another way of formulating the arguments of \cite{HofKMP15B}.
\end{rmk}


\newcommand{\etalchar}[1]{$^{#1}$}
\providecommand{\bysame}{\leavevmode\hbox to3em{\hrulefill}\thinspace}
\providecommand{\MR}{\relax\ifhmode\unskip\space\fi MR }
\providecommand{\MRhref}[2]{%
  \href{http://www.ams.org/mathscinet-getitem?mr=#1}{#2}
}
\providecommand{\href}[2]{#2}

\end{document}